\documentclass[11pt]{amsart}

\usepackage[all]{xy}
\usepackage{enumitem}
\usepackage{mathtools}
\usepackage{mathrsfs}
\usepackage{amssymb}
\usepackage{amsmath,amsfonts,amsthm}
\usepackage{graphicx}
\usepackage{upgreek}
\usepackage[toc,page]{appendix}
\usepackage{bbm}
\usepackage{esint}
\usepackage{scalerel}
\usepackage{stackengine,wasysym}
\usepackage[font=small,labelfont=bf]{caption} 
\usepackage{verbatim, latexsym, amssymb, amsmath,color}
\usepackage{epsfig}
\usepackage{hyperref}
\usepackage{float}
\usepackage{graphicx}
\usepackage{subfig}
\usepackage{mathtools}
\usepackage{tikz-cd}
\usepackage[normalem]{ulem}
\usepackage{marginnote}
\usepackage{setspace}
\usepackage{enumitem}

\newtheorem{thm}{Theorem}[section]
\newtheorem{lemm}[thm]{Lemma}
\newtheorem{cor}[thm]{Corollary}

\newtheorem{prop}[thm]{Proposition}
\newtheorem{lem}[thm]{Lemma}

\newtheorem{teo}{Theorem}[section]

\theoremstyle{remark}
\newtheorem{rmk}[thm]{Remark}

\theoremstyle{definition}
\newtheorem{definition}[thm]{Definition}
\newtheorem{exam}[thm]{Example}

\numberwithin{equation}{subsection}

\usepackage{hyperref}
\usepackage[dvipsnames]{xcolor}
\hypersetup{colorlinks=true, linktocpage=true,citecolor=Emerald, linkcolor=RedViolet, urlcolor=RedViolet}

\allowdisplaybreaks

\title[Ambient geometry via min-max widths of embedded circles]{Ambient geometry via min-max widths of embedded circles}

\author{Lucas Ambrozio, Ivan Miranda, and Rafael Montezuma}
\address{IMPA, Rio de Janeiro RJ 22460-320 Brazil}
\email{l.ambrozio@impa.br}
\email{ivan.miranda@impa.br}
\address{Departamento de Matemática, UFC, Fortaleza/CE 60455-760 Brazil}
\email{montezuma@mat.ufc.br}

\thanks{L.A. was supported by CNPq (302815/2025-2 - Bolsa PQ
and 406666/2023-7 - Universal) and by FAPERJ (E-26/200.175/2023 - Bolsa JCNE and E-26/204.290/2025 - Bolsa JCNE). I.M. was financed in part by the Coordenação de Aperfeiçoamento de Pessoal de Nível Superior - Brasil (CAPES) – Finance Code  001. R.M. is supported by CNPq (305368/2024-9 - Bolsa PQ
and 406666/2023-7 - Universal) and Instituto Serrapilheira grant “New perspectives of the min-max theory for the area functional”}

\begin{document}
	
	\begin{abstract}
		We prove a lower bound for the Birkhoff min-max invariant of a Riemannian sphere in terms of the min-max width of its embedded circles. The main tool is a method to induce a sweepout by pairs of points in an embedded circle from a given sweepout of the sphere by closed curves, so that the points of each pair of the induced sweepout lie close to two curves of the ambient sweepout that are close to each other. Moreover, considering a non-compact complete Riemannian manifold, we relate upper bounds for the min-max width of its embedded circles to the existence of continuous functions from the manifold to finite graphs with level sets that have uniformly bounded diameters, thus giving an estimate for its $1$-width in Urysohn's quantitative dimension theory. In the specific case of the Euclidean plane, we bound from below the classical width of a simple closed curve by its min-max width. Finally, we prove related rigidity results characterizing the round spheres among Zoll spheres.
	\end{abstract}
	
	\maketitle 
	
	\setcounter{tocdepth}{1}
	\tableofcontents  

    \section{Introduction}
    The classical width of a compact planar region is the shortest distance between two parallel lines that contain the region inside the slab between them. The width of a (smooth) simple closed planar curve is the width of the compact region that it encloses. When the curve is convex, its width is attained as the length of a line segment that touches the curve at both ends in right angles. Thus, in that case, this number is a critical value of the Euclidean distance between pairs of points of the curve. 

    This variational perspective was one motivation for the joint work of the first and third named authors with R. Santos \cite{AmbMonSan}. There, a Morse-Lusternik-Schinirelmann theory was developed for the distance functional defined on the space of pairs of points of an embedded circle in a complete Riemannian manifold. In particular, in this theory, there is a min-max notion of width, which is realized by a pair of points on the circle that is critical for the distance functional in a generalized sense, inspired by the work of Grove and Shiohama \cite{GroShi}.

    Let $\gamma$ be an embedded circle in a complete Riemannian manifold $(M,g)$. The space $\mathcal{P}_{\gamma}^*$ of subsets of $\gamma$ with one or two elements, where all singletons are identified, is homeomorphic to the real projective plane. A sweepout of $\gamma$ is a continuous family $\{p_t,q_t\}_{t\in [0,1]}$, consisting of pairs of points of $\gamma$ bounding closed arcs which vary continuously from the trivial arc $\{p_0\}=\{q_0\}$ to the entire circle. Every such family is a non-trivial loop when regarded as a path in $\mathcal{P}_{\gamma}^*$.
(See Definition \ref{defiweepout} and Proposition \ref{prop-sweepout-characterization} for details). The min-max width of $\gamma$ introduced in \cite{AmbMonSan} is the number
\begin{equation*}
    \mathcal{S}_g(\gamma) = \inf_{\substack{\{p_t,q_t\} \\ \text{sweepout}}} \left(\max_{t\in [0,1]} d_g(p_t,q_t)\right),
\end{equation*}
where $d_g(p_t,q_t)$ is the distance in $(M,g)$ between the points $p_t$, $q_t\in \gamma$. It was proven in \cite{AmbMonSan} that $\mathcal{S}_g(\gamma)$ is a non-trivial critical value of the distance functional on $\mathcal{P}^*_\gamma$, and that every critical pair at that level is connected by a simultaneously stationary collection of minimizing geodesics (see \cite{AmbMonSan}, Theorem A). This condition means that it is not possible to decrease to first order the lengths of all of the geodesics in the collection with respect to the flow of the same vector field. When the critical pair is connected by a unique minimizing geodesic, this collection consists of a free boundary geodesic with respect to $\gamma$.

        For convex closed planar curves, a class of curves that include curves of constant width, the min-max width is equal to the classical width (see Corollary 1.4 in \cite{AmbMonSan}). Moreover, an embedded circle in a Riemannian manifold whose min-max width coincides with its extrinsic diameter were regarded in \cite{AmbMonSan} as a Riemannian analogue of a planar curve of constant width, and classical theorems about the planar curves of constant width were proven in greater generality. (See, for instance, \cite{MarMonOli} for the classical theory, and Theorems C and F in \cite{AmbMonSan} for the results in the Riemannian setting).

    Our main goal is to investigate the interplay between the geometry of a complete Riemannian manifold and the min-max widths of the embedded circles it contains.  

    Our first result is about Riemannian spheres. A sweepout of $\mathbb{S}^2$ is a path $\{\Sigma_t\}_{t \in [-1,1]}$ of piecewise smooth closed curves of the form $\Sigma_t = F\circ \overline{c}_t$ for some continuous map $F : \mathbb{S}^2_0 \rightarrow \mathbb{S}^2$ of degree one, where $\{\overline{c}_t\}$ is the standard sweepout of the unit sphere $\mathbb{S}^2_0 \subset \mathbb{R}^3$ by horizontal circles $\overline{c}_t = \mathbb{S}^2_0 \cap \{z=t\}$. The Birkhoff invariant of the Riemannian sphere $(\mathbb{S}^2, g)$ is the following min-max quantity,
\begin{equation*}
w_{B}(\mathbb{S}^2, g) = \inf_{ \substack{ \{ \Sigma_t\} \\ \text{sweepout} }}  \left(\sup_{t\in [-1,1]} L_g(\Sigma_t)\right) , 
\end{equation*}
where $L_g(\Sigma_t)$ is the length of the closed curve $\Sigma_t$ with respect to $g$. It can be shown, following the pioneering arguments of Birkhoff, that $w_{B}(\mathbb{S}^2, g)$ is the length of a closed immersed geodesic \cite{Bir,Cro}. If $(\mathbb{S}^2,g)$ has positive Gaussian curvature, then $w_B(\mathbb{S}^2,g)$ is the length of the shortest non-trivial closed geodesic in $(\mathbb{S}^2,g)$, which is known to be embedded, by the work of Calabi and Cao \cite{Cal-Cao}. (See also the related work of Chambers and Liokumovich \cite{ChaLio}).

The task of estimating $w_B(\mathbb{S}^2,g)$ from above can be achieved by bounding from above the length of curves of explicit sweepouts. Thus, comparing the Birkhoff width to other geometric quantities is in general a problem of constructing sweepouts cleverly. (For instance, see Treibergs' comparison between $w_B(\mathbb{S}^2,g)$ and the area \cite{Tre}. Compare also with \cite{Rot}). Estimating $w_B(\mathbb{S}^2,g)$ from below usually requires, on the other hand, a more involved analysis, as it is typical for min-max invariants. (Compare with \cite{BalSab} or \cite{Che}). In our main result, we prove a lower bound for the Birkhoff invariant of a Riemannian sphere in terms of the min-max width of its embedded circles:

	{
		\renewcommand{\theteo}{A}
		\begin{teo} \label{main-theorem} 
        			Let $(\mathbb{S}^2, g)$ be a Riemannian sphere. For every embedded circle $\gamma$ in $\mathbb{S}^2$,
            \begin{equation*}
                w_{B}(\mathbb{S}^2, g)\geq 2 \cdot \mathcal{S}_g(\gamma).
            \end{equation*}
		\end{teo}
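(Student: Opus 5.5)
Fix an embedded circle $\gamma$ and $\varepsilon>0$, and choose a sweepout $\{\Sigma_t\}_{t\in[-1,1]}$, $\Sigma_t=F\circ\overline{c}_t$, with $\sup_t L_g(\Sigma_t)\le w_B(\mathbb{S}^2,g)+\varepsilon$. The strategy is to use this family to build a sweepout $\{p_s,q_s\}$ of $\gamma$ with $d_g(p_s,q_s)\le \tfrac12 \sup_t L_g(\Sigma_t)+\varepsilon$ for every $s$. Letting $\varepsilon\to 0$ then gives $\mathcal{S}_g(\gamma)\le \tfrac12 w_B(\mathbb{S}^2,g)$.

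The underlying observation is elementary. If two points $p,q$ lie on a closed curve $\Sigma$, then $d_g(p,q)\le \tfrac12 L_g(\Sigma)$, because one of the two arcs of $\Sigma$ joining them has length at most half of $L_g(\Sigma)$. More robustly, suppose $p$ lies within $\delta$ of $\Sigma_t$, $q$ lies within $\delta$ of $\Sigma_{t'}$, and $\Sigma_t,\Sigma_{t'}$ are $\delta$-close (in the $C^0$ sense, with nearly equal lengths). Then $d_g(p,q)\le \tfrac12 L_g(\Sigma_t)+C\delta$. So it is enough to produce the pairs so that, at every stage, both points lie close to two nearby curves of the sweepout.

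The construction proceeds in four steps.

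First, I perturb $F$ to a smooth map in general position with respect to $\gamma$, allowed because sweepouts are defined up to small deformations and lengths change by at most $\varepsilon$. Then $F^{-1}(\gamma)$ is a finite union of embedded circles in $\mathbb{S}^2_0$, and the height function $z$ restricted to it is Morse.

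Second, since $F$ has degree one, $F^{-1}(\gamma)$ maps to $\gamma$ with total degree one. Hence some component $\tilde\gamma$ maps with nonzero degree, and after reparametrizing, the restriction of $z$ to $F^{-1}(\gamma)$ gives a "height" on points of $\gamma$.

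Third, I sweep $t$ from $-1$ to $1$ and track the set $A_t=F(F^{-1}(\gamma)\cap\{z\le t\})\cap\gamma$, or better its lift to a chosen component. I define the arc $[p_s,q_s]\subset\gamma$ as the image of a growing sub-arc of $\tilde\gamma$ whose endpoints lie on the level $\overline{c}_t$. At regular values of $z|_{\tilde\gamma}$, the endpoints of this sub-arc move continuously along level circles, so $p_s,q_s\in\Sigma_t$. At critical values, a sub-arc may merge with another. There I interpolate by a short path in $\mathcal{P}^*_\gamma$ during which the points remain near $\Sigma_t$ for $t$ in a tiny interval. This is exactly the "two close curves" situation, and the estimate above keeps the distance below $\tfrac12 L_g(\Sigma_t)+C\delta$.

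Fourth, I check that the resulting loop in $\mathcal{P}^*_\gamma$ goes from a trivial arc to the whole circle. The degree-one condition guarantees that the image arc eventually covers all of $\gamma$, and Proposition \ref{prop-sweepout-characterization} then certifies it as a sweepout.

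The main obstacle is the combinatorial bookkeeping in the third step. The preimage arcs at level $t$ can be many and nested. Their images in $\gamma$ can overlap and fold back, since $F$ is not injective, so the image of a sub-arc need not be an arc bounded by the images of its endpoints. I would handle this by working with the homology-type quantity, namely the degree of $F$ restricted to $\{z\le t\}$ over points of $\gamma$. This is a locally constant integer function on $\gamma\setminus F(\overline{c}_t)$ that increases from $0$ to $1$. I would select pairs of points of $\gamma\cap\Sigma_t$ bounding the region where it is $\ge 1$. When that region has several components, I would use a discrete min-max/interpolation argument to add the components one at a time. Each connecting move slides points along $\gamma$ between points of the same or nearby curves $\Sigma_t$, which is where the closeness control is needed. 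I expect making this selection continuous, up to moves whose cost is controlled, to be the technical heart of the proof.
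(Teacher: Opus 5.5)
Your overall reduction is the same as the paper's: produce a sweepout of $\gamma$ whose two points lie near two Hausdorff-close curves of $\{\Sigma_t\}$, then apply the half-length estimate. However, the step that actually produces the pairs is missing, and the mechanism you describe for it fails. Suppose $t$ increases monotonically and the tracked arc of $\tilde\gamma$ grows with the sublevel sets of $z|_{\tilde\gamma}$. At a local maximum $m$ of $z|_{\tilde\gamma}$, the tracked arc $[a,m]$ merges with the adjacent sublevel arc $[m,b]$. Passing continuously from $\{F(a),F(m)\}$ to $\{F(a),F(b)\}$ forces the second point through $[m,b]$, where $z$ descends to a local minimum that may lie far below $z(m)$. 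Meanwhile $F(a)$ stays on $\Sigma_{z(m)}$, so the points sit near $\Sigma_{z(m)}$ and $\Sigma_{t''}$ with $t''$ far from $z(m)$, and nothing bounds their distance by $\tfrac12 L_g$. This interpolation is therefore not short.

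Your fallback of adding the components of $\{\text{local degree}\geq 1\}$ one at a time has the same defect. Crossing a gap between components moves one point through uncontrolled levels while the other waits. That region is not even monotone in $t$, since local degrees can decrease.

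The missing idea is that the common level of the two points must be allowed to go \emph{down} as well as up. This is exactly Theorem \ref{prop.algoritmo.v2}, a mountain-climbing lemma: for a Morse function on a circle with one critical point per critical level, it gives a sweepout by equal-height pairs whose level moves back and forth. Note also that the height is not a function on $\gamma$, because $F$ is not injective on $F^{-1}(\gamma)$. It is a function on $\tilde\gamma$, and that is where this lemma should be run.

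Your second and fourth steps also need correcting. Nonzero degree of $F|_{\tilde\gamma}$ is not enough, and ``the image arc eventually covers $\gamma$'' is not the right test. Track the total signed length of the image arc: it runs from $0$ to $\deg(F|_{\tilde\gamma})$ full turns. The lift to $\mathcal{O}_*$ sits at $([*],\gamma)$ or $(\gamma,[*])$ according to the parity. So for even degree the image loop is trivial in $\mathcal{P}^*_\gamma$ and is not a sweepout, while for odd degree it is one.

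Let $F'$ be your smooth map transverse to $\gamma$, and orient $(F')^{-1}(\gamma)$ by pulling back a co-orientation of $\gamma$. The degrees of its components then add up to $\deg F'=1$, so some component $\tilde\gamma$ has odd degree. Choose $\tau$ with $|s_1-s_2|<\tau\Rightarrow d_H(\Sigma_{s_1},\Sigma_{s_2})<\varepsilon$. Take a Morse function $h$ on $\tilde\gamma$ with distinct critical values and $|h-z|<\tau/2$. Theorem \ref{prop.algoritmo.v2} gives a sweepout $\{a_u,b_u\}$ of $\tilde\gamma$ with $h(a_u)=h(b_u)$. Then $\{F'(a_u),F'(b_u)\}$ is a sweepout of $\gamma$, its points lie within $\sup d_g(F',F)$ of $\Sigma_{z(a_u)}$ and $\Sigma_{z(b_u)}$, and $|z(a_u)-z(b_u)|<\tau$. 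Measure against these original curves. A $C^0$ approximation does not control lengths, contrary to your first step, but you do not need it to.

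The paper reaches the same conclusion without smoothing $F$. It works with $\Omega=\{(\theta,s):\gamma(\theta)\in\Sigma_s\}$, which is the image of $F^{-1}(\gamma)$ under $x\mapsto(\gamma^{-1}(F(x)),z(x))$. It uses $\deg F=1$ to show that $\Omega$ separates the two ends of the cylinder. It then takes a homotopically non-trivial embedded circle near $\Omega$, which automatically has degree one over $\gamma$, and applies Theorem \ref{prop.algoritmo.v2} to its $s$-coordinate.
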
                                  
	}
    
    The inequality in Theorem \ref{main-theorem} is sharp, in the sense that it is attained in certain Riemannian spheres by certain embedded circles. For instance, equality holds in Theorem \ref{main-theorem} for the round spheres and its great circles. It does not characterize this geometry, though: the equality also holds, more generally, for positively curved spheres that are invariant by the antipodal map (see Example \ref{example-antipodal-map}). 

    Given a continuous map $\varphi: \gamma\rightarrow \gamma$ without fixed points, Lemma 5.1 in \cite{AmbMonSan} shows that every sweepout $\{p_t, q_t\}_{t \in [0,1]}$ of $\gamma$ contains a pair of points of the form $\{x_{\ast}, \varphi(x_{\ast})\}$, for some $x_{\ast} \in \gamma$. In particular, $\mathcal{S}_g(\gamma)\geq \min_{x\in \gamma} d_g(x,\varphi(x))$. Therefore, Theorem \ref{main-theorem} can be an effective tool for estimating the Birkhoff invariant of a Riemannian sphere.

    Aside from the Birkhoff invariant, there is another min-max geometric invariant associated to the length functional on the space of curves in a Riemannian sphere, namely, its Almgren-Pitts min-max width (\cite{Pitts, Cal-Cao}). The main difference between the two invariants is the much larger class of sweepouts allowed by Almgren-Pitts theory, which includes, for instance, sweepouts by disjoint unions of closed curves. In contrast to Theorem \ref{main-theorem}, we show that the three-legged starfishes idealized by Almgren and constructed by Mantoulidis and Marx-Kuo in \cite{Almgren-Starfish-Mantoulidis-Kuo} give examples of metrics on $\mathbb{S}^2$ admitting embedded circles whose min-max width can be arbitrarily large in comparison to the Almgren-Pitts min-max width (see Example \ref{example-almgren-pitts}). These three-legged starfishes converge to a thrice punctured sphere with a complete Riemannian metric. The embedded circles that resemble geodesic triangles whose vertices lie arbitrarily far away near the three distinct ends have arbitrarily large min-max width.
    
    More generally, complete Riemannian manifolds whose embedded circles have uniformly bounded min-max width not only have their topology restricted, but also their geometry is restricted in such way that it resembles, macroscopically, a finite graph.

    	{
		\renewcommand{\theteo}{B}
        
    \begin{teo} \label{theorem-non-compact}
       Let $(M,g)$ be a complete non-compact Riemannian manifold. The following assertions are equivalent:
        \begin{enumerate}
            \item $\sup \{ \mathcal{S}_g(\gamma) : \gamma \text{ is an embedded circle in } M\} < +\infty$.
            \item The manifold $M$ has at most two ends, and there exist a finite graph $G$ and a continuous function $f: M \to G$ such that 
            \begin{equation*}
                \sup_{h \in G} diam f^{-1}(h) < +\infty.
            \end{equation*}
        \end{enumerate}
	\end{teo}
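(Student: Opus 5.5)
We will prove the two implications separately. Throughout, a sweepout of an embedded circle $\gamma$ is a family of pairs $\{p_t,q_t\}$ bounding arcs that grow from a point to all of $\gamma$. In particular, if $f\circ\gamma$ can be "pushed through" a graph in a controlled way, we obtain a sweepout whose pairs have controlled distance.

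\textbf{(2) $\Rightarrow$ (1).} Let $D=\sup_{h\in G}\operatorname{diam} f^{-1}(h)$, and fix an embedded circle $\gamma$. The loop $f\circ\gamma$ lives in the finite graph $G$. The plan is to construct a sweepout $\{p_t,q_t\}$ of $\gamma$ such that, for every $t$, the points $f(p_t)$ and $f(q_t)$ are at most a bounded number of vertices/edges apart in $G$, or ideally coincide.

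The model case is $G$ an interval and the two-ended situation. Parametrize $\gamma$ so that $f\circ\gamma$ attains its minimum at the initial point. Then grow the arc in both directions, always advancing the endpoint with the smaller value of $f$. This is a standard "level-matching" argument: by continuity and an intermediate value argument one gets $f(p_t)=f(q_t)$ at all times, possibly after approximating by a piecewise monotone (PL) map. Consequently $d_g(p_t,q_t)\le D$.

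For a general graph, first homotope $f\circ\gamma$ within $G$ to a PL loop. Then handle the cycles of $G$: a loop may wind around a cycle, and there level matching fails. Here the hypothesis of at most two ends and the finiteness of $G$ enter. We expect to bound the winding effect by the sum of the diameters of preimages of a finite set of points, one per edge. This gives a bound of the form $\mathcal S_g(\gamma)\le C(G)\,D$, obtained by choosing the pairs so that $f(p_t)$ and $f(q_t)$ lie in a common edge, and then connecting through preimages of the vertices.

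\textbf{(1) $\Rightarrow$ (2).} Let $S=\sup_\gamma \mathcal S_g(\gamma)$. There are two steps.

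\emph{Ends.} Suppose $M$ had three ends. Choose points $x_1,x_2,x_3$ far out in distinct ends, at mutual distance larger than $R$ from a fixed compact set $K$, and take an embedded circle through them resembling a triangle; this is embeddable in dimension $\ge 2$. By Lemma 5.1 of \cite{AmbMonSan}, or by a direct topological argument, every sweepout contains a pair $\{p,q\}$ separated as follows: the arc between them contains one $x_i$ and its complement contains another, so that any path from $p$ to $q$ must either pass through $K$ or run far into an end. A careful choice makes $d(p,q)\gtrsim R$, contradicting (1). This mirrors the starfish example.

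\emph{The map to a graph.} Fix $o\in M$ and $r=d(o,\cdot)$. Set $f$ to be the quotient of $M$ by the relation identifying points that lie in the same connected component of an annulus $r^{-1}([kL,(k+1)L])$, for $L\gg S$, in the style of a Reeb/Kronrod graph. The plan is to show three things:
\begin{enumerate}
\item Each such component has bounded diameter. Two far-apart points in one component, together with paths to $o$, yield an embedded circle whose min-max width is large.
\item The number of components per annulus is uniformly bounded, again using circles through distinct components, so that the graph is "locally finite and thin".
\item Because there are at most two ends, the resulting graph is, outside a compact part, one or two rays. These can be replaced by finite graphs by mapping each ray to a segment via a bounded-fiber reparametrization. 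Fibers of the composed map must still have bounded diameter, so instead we keep the ray structure but note that a finite graph with an interval whose preimage is unbounded is disallowed. We therefore expect to have to use that the level sets of $r$ in each end have bounded diameter, mapping the ends to half-lines inside a compact graph only if the intended $G$ allows noncompact edges.
\end{enumerate}

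\textbf{Main obstacle.} We expect the main obstacle to be step (1): producing, from two far-apart points in the same component, an embedded circle for which every sweepout contains a far pair. The first attempt will be to close up the circle using the two radial geodesics to $o$ together with a path in the component. We then apply Lemma 5.1 of \cite{AmbMonSan} with the fixed-point-free map given by the "half-turn" along this circle, and check that it moves points a distance $\gtrsim L$.
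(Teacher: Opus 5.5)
There are genuine gaps in both directions. Your three-ends argument is essentially the paper's, provided the sides are taken to be minimizing geodesics (so that each vertex is far from the opposite side) and you use the involution of Proposition \ref{prop-widthestimate-triangle} together with Lemma \ref{lem-nofixedpts-AMS}. The arc-separation property you describe does not by itself produce a far pair.

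The real gap in $(1)\Rightarrow(2)$ is the map to a \emph{finite} graph, and your step (3) is false. Take a flat cylinder with thin spikes of fixed length $\ell>1$ attached at every integer height. Here (1) holds: the height function has fibers of diameter at most about $\pi+\ell$, so Theorem \ref{thm-diam-->width} applies. Yet for every large $k$ the annulus $r^{-1}([kL,(k+1)L])$ has a separate component, formed by the tip of a spike whose base lies in $\{r<kL\}$. So your graph has infinitely many leaves outside every compact set, and a uniform bound on the number of components per annulus does not make it finite. Besides, the relation you quotient by is not transitive, since consecutive closed annuli overlap; you would need a nerve map.

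Your worry about rays is also misplaced. If $f:M\to\mathbb{R}$ has fibers of bounded diameter, composing with a homeomorphism $\mathbb{R}\to(0,1)$ gives a map into the one-edge graph with the same fibers, so a real-valued $f$ suffices.

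The missing idea is to run the triangle argument once more, against a line $\sigma$ (two ends) or a ray $\sigma$ (one end). Let $\sigma(t_p)$ be a point of $\sigma$ closest to $p$, with $d(p,\sigma)>S$. In the one-ended case, first move $p$ outside a large compact set with $S<d(p,\sigma)\le 3S/2$, so that $t_p$ is large. Then the triangle with vertices $p$, $\sigma(t_p-T)$, $\sigma(t_p+T)$ has every vertex at distance $>S$ from the opposite side for suitable $T$, contradicting (1). Hence $M=N_S(\sigma)$, respectively $M\setminus K\subset N_S(\sigma)$ with $K$ compact, and in both cases $M\subset N_C(\sigma)$. The Busemann function then works: $|B_\sigma(p)+t_p|\le d(p,\sigma)\le C$. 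So two points on a common level satisfy $|t_{p_1}-t_{p_2}|\le 2C$ and are at distance at most $4C$ (Lemma \ref{prop-busemann-->strong-urysohn}).

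In $(2)\Rightarrow(1)$, your greedy rule does not establish the interval case. Suppose that on one side of the minimum $f\circ\gamma$ rises to $2$, dips to $1$ and rises to $3$, while on the other side it increases monotonically to $3$. Any level-matched family must then move the second endpoint backwards. Advancing the endpoint with the smaller value instead leaves the two levels apart by the depth of the valley, which controls no distance. What you need is the mountain-climbing statement, Theorem \ref{prop.algoritmo.v2}: for a Morse function with one critical point per critical level, the level-matched pairs form a $1$-manifold in $\mathcal{P}$ whose component issuing from $\{\min\}$ must end at $\{\max\}$, and this backtracking path is a sweepout. You also need compactness of $f^{-1}([c,d])$ to get $\operatorname{diam}(f^{-1}(s_1)\cup f^{-1}(s_2))<C+\varepsilon$ for nearby $s_1,s_2$, because after approximation the levels only nearly match.

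For general $G$ you give no actual argument, and the estimate $\mathcal{S}_g(\gamma)\le C(G)\,D$ you aim for is false. Remove two small discs from a flat torus $(\mathbb{R}/a\mathbb{Z})\times(\mathbb{R}/b\mathbb{Z})$ with $a\gg b$ and attach two half-cylinders. Map this to a cycle with two tails by the first coordinate, and by the height along the half-cylinders, interpolating on the necks. The fibers have diameter bounded independently of $a$. But the half-rotation of a horizontal circle avoiding the discs moves every point by about $a/2$, so its min-max width is at least about $a/2$ by Lemma \ref{lem-nofixedpts-AMS}.

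So the bound must depend on $M$, and this is where the two-ends hypothesis enters. Bounded fibers force $f$ to converge along each end to a point of $G$ from within a single edge. The preimage of the complement of small neighbourhoods of these (at most two) limit points is compact. After detaching the terminal edges, you collapse this compact core to get a map onto an interval with bounded fibers (Lemma \ref{prop-finite-graph-vs-R}), which reduces everything to the interval case.
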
     
    }

    Theorem \ref{theorem-non-compact} shows, in particular, that complete Riemannian manifolds, of arbitrary dimension, whose embedded circles have uniformly bounded min-max width are one-dimensional in the sense of Urysohn's topological dimensional theory, as it gives a quantitative estimate for its $1$-width. (See Section 2 of \cite{Guth} for an introduction to topological dimension theory in connection to geometry and Section 2 of \cite{Bal} for more about the Urysohn width. We show in Example \ref{example-simplicial-complex-restriction} that the restriction on the simplicial complex $G$ in Theorem \ref{theorem-non-compact} is necessary).
    
    The right circular cylinder in Euclidean three-space is an example of non-compact surface where the largest width of an embedded circle is precisely the diameter of its circular slices (see Theorem \ref{thm-diam-->width}).

    Back to embedded circles in the Euclidean plane, a simple argument bounds from below the classical width $w(\gamma)$ of a simple closed planar curve by the inradius $inrad(\Omega)$ of the region $\Omega$ that the curve $\gamma$ encloses. With the tools we developed to prove Theorem \ref{main-theorem}, we show that $\mathcal{S}_g(\gamma)$ gives a better estimate:

        	{
		\renewcommand{\theteo}{C}
        
	\begin{teo} \label{thm-w>S}
		If $\gamma$ is a planar simple closed curve bounding a region $\Omega$, then
        \begin{equation*}
            w(\gamma) \ge \mathcal{S}(\gamma)\geq 2\cdot inrad(\Omega).
        \end{equation*}
    \end{teo}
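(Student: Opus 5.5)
We prove the two inequalities separately. Throughout, $\mathcal{S}(\gamma)$ denotes the min-max width with respect to the Euclidean distance, and we use the characterization of sweepouts in Proposition \ref{prop-sweepout-characterization}: a sweepout is a non-trivial loop in $\mathcal{P}^*_\gamma \cong \mathbb{RP}^2$.

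\emph{Upper bound $w(\gamma)\ge \mathcal{S}(\gamma)$.} Choose a direction $\nu$ realizing the width, so that $\Omega$ lies in the slab $\{a\le \langle x,\nu\rangle\le b\}$ with $b-a=w(\gamma)$. We sweep the plane by lines perpendicular to the slab. Let $u$ be a unit vector orthogonal to $\nu$. For $s\in\mathbb{R}$, let $\ell_s=\{\langle x,u\rangle = s\}$, and let $s_0$ and $s_1$ be the extreme values of $\langle x,u\rangle$ on $\gamma$. For $s$ slightly larger than $s_0$, the set $\{\langle x,u\rangle\le s\}\cap\gamma$ is an arc near the extreme point, and its endpoints lie on $\ell_s$ inside the slab. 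Hence these endpoints are at distance at most $w(\gamma)$.

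In general the intersection $\gamma\cap\ell_s$ may contain many points, so $\{\langle x,u\rangle\le s\}\cap\gamma$ is not a single arc. To handle this, we follow the arc-growing idea behind the paper's induced sweepouts. We build a continuous family of arcs of $\gamma$ that grows monotonically, whose two endpoints always lie on a common line $\ell_s$, or at least within distance $w(\gamma)+\varepsilon$ of each other. First approximate $\gamma$ by a curve in general position with respect to $u$, and invoke the stability of $\mathcal{S}$ under $C^0$-perturbations from \cite{AmbMonSan}. Then the endpoints $p$ and $q$ can be pushed along $\gamma$ as $s$ increases, with backtracking allowed where the curve folds. The pairs $\{p,q\}$ always lie on a single vertical chord within the slab. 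Any sweepout-like path whose endpoints stay on common lines $\ell_s$ yields pairs at distance $\le w(\gamma)$. We then check non-triviality in $\mathcal{P}^*_\gamma$ via the Proposition. This is essentially a one-dimensional version of the ambient-to-circle sweepout construction used for Theorem \ref{main-theorem}, applied to the sweepout of $\mathbb{R}^2$ by the lines $\ell_s$.

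\emph{Lower bound $\mathcal{S}(\gamma)\ge 2\, inrad(\Omega)$.} Let $B=B_r(x_0)\subset\Omega$ be an inscribed disk with $r=inrad(\Omega)$, and let $\{p_t,q_t\}$ be any sweepout of $\gamma$. Suppose, for contradiction, that $|p_t-q_t|<2r$ for all $t$. Then the segment $[p_t,q_t]$ misses $x_0$: if it passed through $x_0$, both endpoints would lie outside the open disk $B$ (they are on $\gamma$), forcing $|p_t-q_t|\ge 2r$.

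Now consider the closed curve $\sigma_t$ formed by the arc of $\gamma$ from $p_t$ to $q_t$ together with the segment $[q_t,p_t]$. It avoids $x_0$, since the arc lies on $\gamma$ and $x_0\in\Omega$ is interior. So its winding number $n(t)$ about $x_0$ is well defined, and it is continuous in $t$, hence constant. At $t=0$ the arc is trivial, so $n(0)=0$. At $t=1$ the arc is all of $\gamma$, so $n(1)=\pm1$. This is a contradiction.

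The care needed here is that $\sigma_t$ must vary continuously; that is provided by the sweepout definition (Definition \ref{defiweepout}) of arcs varying continuously. Taking the infimum over sweepouts gives $\mathcal{S}(\gamma)\ge 2r$.

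The main obstacle is the first inequality. Keeping the two endpoints on a common perpendicular line while the arc grows continuously fails at tangency configurations, and it is there that the paper's induced-sweepout machinery, or a genericity-plus-approximation argument, must be invoked.
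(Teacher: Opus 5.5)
Your first inequality follows the paper's route (Theorem~\ref{thm.algoritmo}): pair points of $\gamma$ lying on a common line $\ell_s$ orthogonal to the boundary lines of an optimal slab, so each pair is at distance at most $w(\gamma)$. The paragraph describing how to build such pairs is a heuristic rather than a proof, and it is inaccurate as stated. The common level $s$ cannot be taken to increase (in Figure~\ref{figura} it goes back and forth), and the arcs can be forced to shrink at some stages, so they do not grow monotonically. The real content is that this back-and-forth process never gets stuck and ends with the arc covering all of $\gamma$, i.e.\ that it yields a non-trivial loop in $\mathcal{P}^*_\gamma$. That is precisely Theorem~\ref{prop.algoritmo.v2}, which the paper proves by showing that the component of $\{f(p)=f(q)\}\subset\mathcal{P}$ starting at the minimum is an arc ending at the maximum.

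So cite it directly. Approximate $\langle\gamma,u\rangle$ uniformly within $\delta/2$ by a Morse function $h$ with one critical point per critical level (as in Theorem~\ref{thm-diam-->width}) and apply Theorem~\ref{prop.algoritmo.v2} to $h$. The resulting pairs satisfy $|\langle\gamma(p_t)-\gamma(q_t),u\rangle|<\delta$ and $|\langle\gamma(p_t)-\gamma(q_t),\nu\rangle|\le w(\gamma)$, whence $\mathcal{S}(\gamma)\le\sqrt{w(\gamma)^2+\delta^2}$. This needs no stability result for $\mathcal{S}$ and no embedded perturbed curve. The paper instead perturbs the direction within the dense set of good directions and uses continuity of $v\mapsto w_v(\gamma)$.

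Your second inequality is correct and takes a genuinely different route. The paper (Theorem~\ref{thm:inradius}) pushes a sweepout of $\gamma$ forward by the $1$-Lipschitz radial projection onto $\partial D$. This projection has degree one on $\gamma$ and so maps sweepouts to sweepouts, and the paper then uses that the min-max width of a round circle is its diameter. You instead observe that the loops formed by $C_t$ and the segment $[q_t,p_t]$ give a homotopy from a constant loop to $\gamma$. By the Jordan curve theorem, some chord $[p_t,q_t]$ must then pass through any prescribed $x_0\in\Omega$, and such a chord has length at least $2\,\mathrm{dist}(x_0,\gamma)$.

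Your argument is self-contained. It needs neither the push-forward of sweepouts under degree-one maps nor the value $\mathcal{S}(\partial D)=2r$; applied to $\gamma=\partial D$ it actually proves the latter. It also gives the a priori stronger bound $\mathcal{S}(\gamma)\ge\sup_{x\in\Omega}\min\{|p-q| : p,q\in\gamma,\ x\in[p,q]\}$. The paper's projection argument is more functorial: it works verbatim with $D$ replaced by any smooth compact convex region $K\subset\Omega$, giving $\mathcal{S}(\gamma)\ge\mathcal{S}(\partial K)$. A minor point: the supremum defining $inrad(\Omega)$ is attained by compactness, or you can run your argument for every disk in $\Omega$ and take the supremum.
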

    }

    If $\gamma$ is the planar simple closed curve with the shape of a thin letter $U$, then $w(\gamma) > \mathcal{S}(\gamma)$. An equilateral triangle with slightly rounded corners gives an example where $\mathcal{S}(\gamma)>2\cdot inrad(\Omega)$.

    Our last two results are rigidity results for Zoll spheres. They are Riemannian spheres whose geodesics are all simple closed curves with the same length. This is a very flexible condition: aside from the round Euclidean sphere, there are infinite dimensional families of Riemannian spheres with this property (see \cite{Bes}, \cite{Gui}, and \cite{Zol}).
    
    We show that the round spheres, among Zoll spheres, can be characterized by the min-max width of its geodesics: 

        	{
		\renewcommand{\theteo}{D}
        
    \begin{teo} \label{theorem-zoll-rigidity-intro}
		Let $(\mathbb{S}^2,g)$ be a Zoll sphere. If there exists a geodesic $\sigma$ of $(\mathbb{S}^2,g)$ such that
            \begin{equation*}
                \mathcal{S}_g(\sigma)=\frac{1}{2} \cdot w_B(\mathbb{S}^2,g),
            \end{equation*}
        then $(\mathbb{S}^2,g)$ has constant Gaussian curvature. 
	\end{teo}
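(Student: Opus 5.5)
The plan is to first show that the hypothesis forces every pair of points of $\sigma$ that split it into two equal arcs to realize the half-length as their ambient distance, and then to use the Zoll structure to turn this into rigidity. Normalize $g$ so that every geodesic of $(\mathbb{S}^2,g)$ is a simple closed curve of length $2\pi$.

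\textbf{Step 1: $w_B=2\pi$.} By Birkhoff's theorem, $w_B(\mathbb{S}^2,g)$ is the length of a closed immersed geodesic. Its value is realized by the standard Zoll sweepout through the one-parameter family of geodesics. I will check this carefully, since Birkhoff min-max could a priori detect a multiply covered geodesic of length $4\pi$. The Zoll sweepout is admissible and has curves of length at most $2\pi$, so $w_B\le 2\pi$. Together with Theorem \ref{main-theorem}, the hypothesis then reads $\mathcal{S}_g(\sigma)=\pi$, and in any case $\mathcal{S}_g(\sigma)\ge \pi$ is what I use below.

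\textbf{Step 2: antipodal pairs on $\sigma$ are at distance $\pi$.} Parametrize $\sigma$ by arclength, $s\in\mathbb{R}/2\pi\mathbb{Z}$. For each center $a$, consider the sweepout
\begin{equation*}
\{p_u,q_u\}=\{\sigma(a-u),\sigma(a+u)\}, \qquad u\in[0,\pi].
\end{equation*}
Since $d_g(p_u,q_u)$ is bounded by the length of either arc of $\sigma$ joining the two points, we get $d_g(p_u,q_u)\le \min(2u,2\pi-2u)$. This bound is $<\pi$ except at $u=\pi/2$. Hence $\mathcal{S}_g(\sigma)\ge\pi$ forces
\begin{equation*}
d_g(\sigma(a-\tfrac{\pi}{2}),\sigma(a+\tfrac{\pi}{2}))=\pi \quad\text{for every } a.
\end{equation*}
To be careful: if the maximum were $<\pi$ for some $a$, this family would be a sweepout with max $<\pi$, a contradiction. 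So for every $s$, both halves of $\sigma$ between $\sigma(s)$ and $\sigma(s+\pi)$ are minimizing geodesics.

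\textbf{Step 3: injectivity radius $\pi$ along $\sigma$.} On a Zoll sphere with period $2\pi$, the geodesics issuing from a point $x$ all pass through a common point $A(x)$ at time $\pi$, the first conjugate point. This is the classical structure of $SC_{2\pi}$-surfaces in \cite{Bes}. By Step 2, $A(\sigma(s))=\sigma(s+\pi)$ and $d_g(x,A(x))=\pi$ for $x\in\sigma$. Since no geodesic minimizes past its first conjugate point, and every geodesic from $x$ reaches $A(x)$ at length exactly $\pi=d_g(x,A(x))$, every geodesic from $x$ is minimizing on $[0,\pi]$. Therefore $\exp_x$ is a diffeomorphism from the open disk of radius $\pi$ onto $\mathbb{S}^2\setminus\{A(x)\}$, i.e. $\mathrm{inj}(x)=\pi$ for all $x\in\sigma$.

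\textbf{Step 4 (main obstacle): rigidity.} It remains to propagate from the curve $\sigma$ to constant curvature. I would first try the Berger--Kazdan inequality. Write the area, which equals $4\pi$ for Zoll spheres of period $2\pi$ (Weinstein), in polar coordinates at $x\in\sigma$ as
\begin{equation*}
\mathrm{Area}=\int_{S_x}\int_0^\pi J(r,\theta)\,dr\,d\theta .
\end{equation*}
Then integrate over $x\in\sigma$ and over directions, using a Santaló-type formula for the geodesic flow. Berger--Kazdan gives $\int\!\int J\ge$ the round value, with equality only if the Jacobi fields along those geodesics are the round ones, i.e. $K\equiv 1$ along every geodesic through a point of $\sigma$. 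Every geodesic meets $\sigma$, since two simple closed geodesics on $\mathbb{S}^2$ intersect, so this would give $K\equiv1$ everywhere.

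The delicate point is that Berger--Kazdan is usually stated with injectivity radius control at all points. I expect to need the Zoll structure, namely that every unit vector lies on a closed geodesic meeting $\sigma$ whose Jacobi fields are controlled from the base point on $\sigma$, to justify the equality case using only points of $\sigma$. If this fails, the fallback is to show that every geodesic through a point of $\sigma$ also satisfies the hypothesis of Step 2, hence $\mathrm{inj}\equiv\pi$ everywhere. Green's theorem (Blaschke conjecture for $\mathbb{S}^2$) then gives constant curvature.
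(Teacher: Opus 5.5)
Your Steps 1 and 2 are sound. The symmetric sweepouts $\{\sigma(a-u),\sigma(a+u)\}$ show that the hypothesis forces $d_g(\sigma(s),\sigma(s+\pi))=\pi$ for all $s$; this is the easy half of Theorem D in \cite{AmbMonSan}, and is in fact equivalent to the hypothesis.

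The argument breaks at Step 3. You attribute to every Zoll sphere of period $2\pi$ the property that all geodesics leaving any point $x$ pass through a common point $A(x)$ at time $\pi$. That is the Wiedersehen property, and by Green's theorem \cite{Gre} it forces constant curvature. So it fails on every non-round Zoll sphere, e.g.\ those of \cite{Zol,Gui}: geodesics from $x$ all return to $x$ at time $2\pi$, but they need not concur earlier. Restricted to $x\in\sigma$, what you actually need is that every geodesic $\gamma_v$ leaving $p=\sigma(s)$ meets $\sigma$ again exactly at $\sigma(s+\pi)$, so that both arcs of $\gamma_v$ between its two intersection points with $\sigma$ have length $\pi$. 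This is the heart of the theorem. Step 2 only constrains distances between points of $\sigma$ and says nothing about the other geodesics through $p$, and your proposal gives no mechanism for passing from one to the other.

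Step 4 does not repair this. The Berger--Kazdan argument needs $\mathrm{inj}\ge\pi$ at (almost) every base point, because Kazdan's inequality is applied after averaging along orbits of the geodesic flow, using invariance of Liouville measure. The unit vectors based on $\sigma$ form a null set that is not flow-invariant. Your fallback to Green's theorem presupposes the same missing step.

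The paper supplies this step by testing the hypothesis on $\sigma$ against a Birkhoff sweepout adapted to $\gamma_v$ rather than to $\sigma$:
\begin{enumerate}
\item For a geodesic $\gamma_v$ through $p\in\sigma$, Lemma \ref{optimal-sweepout-lemma} (built from a local foliation plus curve shortening flow and the Zoll condition) gives a sweepout by the level sets $\Sigma_t$ of a continuous function $f$. Here $\Sigma_0=\gamma_v$ is the only leaf of length $w_B=2\pi$, and leaves with $|t|\ge\delta$ are uniformly shorter.
\item Approximating $f\circ\sigma$ by a Morse function and applying Theorem \ref{prop.algoritmo.v2} yields a sweepout of $\sigma$ by pairs lying in almost the same leaf. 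Each such pair is therefore at distance at most about $\frac12 L_g(\Sigma_t)$.
\item Since $\mathcal{S}_g(\sigma)=\pi$, some pair is at distance nearly $\pi$, which forces both points onto leaves $\Sigma_t$ with $|t|<\delta$. Passing to the limit (Lemma \ref{key-lemma-zoll}) gives two points of $\gamma_v\cap\sigma$ at distance $\pi$.
\item Distinct geodesics of a Zoll sphere meet in exactly two points, so these are $p$ and $\hat p$, and both arcs of $\gamma_v$ between them have length $\pi$.
\end{enumerate}
Hence every chord of each closed region bounded by $\sigma$ has length $\pi$, and Bangert's chord theorem \cite{Ban-chords} makes each region a round hemisphere.
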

    }

    The Birkhoff invariant of a Zoll sphere $(\mathbb{S}^2,g)$ is the common length of its geodesics. In contrast to Theorem \ref{theorem-zoll-rigidity-intro}, other geometric invariants are unable to characterize the round spheres among Zoll spheres. For instance, the diameter of a Zoll sphere is clearly bounded from above by half of its Birkhoff invariant, and Zoll spheres of revolution satisfy $diam(\mathbb{S}^2,g)=\frac{1}{2}w_B(\mathbb{S}^2,g)$. In contrast, Balacheff, Croke and Katz \cite{BalCroKat} constructed examples of Zoll spheres for which $diam(\mathbb{S}^2,g) < \frac{1}{2} w_B(\mathbb{S}^2)$. Incidentally, all embedded circles $\gamma$, and not only geodesics, satisfy $\mathcal{S}_g(\gamma)\leq diam(\mathbb{S}^2,g)< \frac{1}{2}w_B(\mathbb{S}^2,g)$ in these examples. In a similar vein, every Zoll sphere satisfies $w_B(\mathbb{S}^2,g)^2=\pi\cdot area(\mathbb{S}^2,g)$, by a result of Weinstein \cite{Wei}. 

    There is another min-max invariant for which we can prove a rigidity theorem for Zoll metrics. This is the min-max distance width of a Riemannian sphere $(\mathbb{S}^2,g)$, introduced by the third-named author and I. Ribeiro \cite{MonRib} as a higher dimensional analogue of the min-max width of embedded circles. 

    Let $(\mathbb{S}^2,g)$ be a Riemannian sphere. The space $\mathcal{P}_{\mathbb{S}^2}$ of subsets of $\mathbb{S}^2$ with one or two elements is homeomorphic to $\mathbb{C}\mathbb{P}^2$. A sweepout of $\mathbb{S}^2$ by pairs of points is a family $\{\phi_1(x),\phi_2(x)\}_{x\in \mathbb{S}^2}$, consisting of a pair of continuous functions $\phi_1,\phi_2 : \mathbb{S}^2 \to \mathbb{S}^2$, such that the corresponding map from $\mathbb{S}^2$ to $\mathcal{P}_{\mathbb{S}^2}$ is homotopic to the standard sweepout $\Phi_0(x) = \{x,x_0\}$, where $x_0 \in \mathbb{S}^2$ is fixed. The min-max distance width of a Riemannian sphere $(\mathbb{S}^2,g)$ is the real number 
    \begin{equation*}
        W_d(\mathbb{S}^2,g) = \inf_{\substack{\{\phi_1(x),\phi_2(x)\}\\ \text{sweepout}}} \left(\max_{x \in \mathbb{S}^2} d_g(\phi_1(x),\phi_2(x))\right).
    \end{equation*}
    It was shown in \cite{MonRib} that $W_d(\mathbb{S}^2,g)$ is a non-trivial critical value of the distance functional defined on $\mathcal{P}_{\mathbb{S}^2}$, in a generalized sense. When a critical pair at the level $W_d(\mathbb{S}^2,g)$ is connected by exactly two minimizing geodesics, they form together an embedded closed geodesic with length $2W_d(\mathbb{S}^2,g)$. In general, other configurations of simultaneously stationary minimizing geodesics may occur (for instance, see Theorem 1.8 in \cite{MonRib}). The case where the sphere is embedded in a complete Riemannian manifold and the distance function between pairs of points is the extrinsic distance was also considered in \cite{MonRib}. For convex spheres embedded in $\mathbb{R}^3$, a class that contains all bodies of constant width, this min-max width coincides with the classical width of the region it encloses (Proposition 1.14 in \cite{MonRib}), and classical results about bodies of constant width were generalized to the Riemannian setting (see Theorem 1.15 and Theorem 1.17 in \cite{MonRib}). 
    
    It turns out that the intrinsic geometric invariant $W_d(\mathbb{S}^2,g)$ also distinguishes the round metric among Zoll spheres:

        	{
		\renewcommand{\theteo}{E}
        
\begin{teo} \label{teo-E--Wd-vs-wB}
    Let $(\mathbb{S}^2,g)$ be a Zoll sphere. Then  $$w_B(\mathbb{S}^2,g) \geq 2 \cdot W_d(\mathbb{S}^2,g)$$ and equality holds if and only if $(\mathbb{S}^2,g)$ has constant Gaussian curvature.
\end{teo}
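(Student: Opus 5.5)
We prove the inequality by exhibiting an explicit sweepout by pairs of points built from the geodesics through a fixed point. The rigidity statement then follows from a stationarity argument combined with the characterization of round spheres among Zoll spheres.

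\emph{The inequality.} In a Zoll sphere all geodesics are simple closed curves of common length $L = w_B(\mathbb{S}^2,g)$. Fix $x_0\in\mathbb{S}^2$. Every geodesic from $x_0$ returns to $x_0$ at time $L$. A classical fact about Zoll spheres (Besse) is that the first conjugate point along such a geodesic occurs at time exactly $L/2$, and that all unit-speed geodesics from $x_0$ pass through a common antipodal point $\bar x_0$ at time $L/2$. Consequently $\exp_{x_0}$ maps the closed disk of radius $L/2$ onto $\mathbb{S}^2$, collapsing the boundary circle to $\bar x_0$.

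For $x=\exp_{x_0}(rv)$ with $0\le r\le L/2$, we set
\[
\Phi(x)=\{\exp_{x_0}(-rv/2),\ \exp_{x_0}(rv/2)\},
\]
the pair of points on the closed geodesic through $x_0$ in direction $v$ lying at arclength $r/2$ on either side of $x_0$. This is well defined: at $r=0$ both points equal $x_0$, and at $r=L/2$ the pair is independent of $v$ only up to the identification of the boundary circle to a point, which we must handle. A cleaner choice is the pair $\{x_0$-centered points at distance $r$ along the geodesic$\}$, i.e.\ $\{\exp_{x_0}(rv), x_0\}$, interpolated as follows. We use $\phi_1(x)=\exp_{x_0}(-\tfrac{r}{2}v)$ and $\phi_2(x)=\exp_{x_0}(\tfrac{r}{2}v)$ composed with the reparametrization $x\mapsto$ the midpoint shift. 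The distance $d_g(\phi_1,\phi_2)$ is at most the arc length $r\le L/2$.

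We must still check two things. First, the map is continuous at $\bar x_0$: the pairs $\{\exp(-Lv/4),\exp(Lv/4)\}$ depend on $v$, so we instead use a homotopy, translating along each geodesic by a continuous parameter, to land on a map homotopic to $\Phi_0(x)=\{x,x_0\}$. Second, the map is a genuine sweepout. The simplest route is to use $\Phi_0$ itself: $d_g(x,x_0)\le diam\le L/2$. This gives $W_d\le \max_x d_g(x,x_0)\le diam(\mathbb{S}^2,g)\le L/2$ directly, since $\Phi_0$ is a sweepout by definition. So the inequality reduces to $diam\le L/2$, which holds because any two points lie on a common closed geodesic of length $L$ (any geodesic from one point reaching the other).

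\emph{Rigidity.} We assume $W_d=L/2$. By \cite{MonRib} there is a critical pair $\{p,q\}$ at level $W_d$ joined by a simultaneously stationary collection of minimizing geodesics. Since $d_g(p,q)=L/2=diam$, the point $q$ is conjugate and antipodal to $p$, and indeed every geodesic from $p$ reaches $q$ at time $L/2$. We expect stationarity to force more than this: $\sup_x d(x,x_0)=L/2$ for every $x_0$, since otherwise a deformation of $\Phi_0$ would lower the maximum below $L/2$. We therefore aim to show that every point has an antipode at distance exactly $L/2$, i.e.\ $diam(\mathbb{S}^2,g)=L/2$ is realized from every point. From this, the map $x\mapsto\bar x$ is an isometric involution, and all geodesics from $x$ minimize up to length $L/2$. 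This makes $(\mathbb{S}^2,g)$ a Blaschke/wiedersehen manifold with injectivity radius equal to its diameter, and by the Green--Berger theorem (the Blaschke conjecture on $\mathbb{S}^2$) it is round. The converse, that round spheres have $W_d=\pi=L/2$, follows from the antipodal estimate in \cite{MonRib}.

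\emph{Main obstacle.} The hardest step is upgrading a single critical pair at distance $L/2$ to the conclusion that $d(x,\bar x)=L/2$ for every $x$. Our first attempt will be to construct, assuming some $x_0$ has $\max_y d(x_0,y)<L/2$, a sweepout homotopic to $\Phi_0$ centered near $x_0$ whose maximum stays below $L/2$. This contradicts $W_d=L/2$, provided the open set of such $x_0$ can be used uniformly; the delicate point is controlling the maximum over all of $\mathbb{S}^2$ and not only near $x_0$.
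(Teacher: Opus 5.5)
Your proof of the inequality is correct once the abandoned attempts are removed. Since $\Phi_0$ is admissible, $W_d(\mathbb{S}^2,g)\le \max_x d_g(x,x_0)\le \mathrm{diam}(\mathbb{S}^2,g)\le L/2=\frac12 w_B(\mathbb{S}^2,g)$. The diameter bound holds because a minimizing segment lies on a simple closed geodesic of length $L$.

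This is a Zoll-specific shortcut. The paper's Proposition \ref{thm-comparison} instead pairs each point of a Birkhoff sweepout curve $F(\overline{c}_t)$ with a fixed point $F(\overline{c}_t(t_0))$ of the same curve. That proves $w_B\ge 2W_d$ for every Riemannian sphere, and it is also the construction that later drives the rigidity.

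You should delete the ``classical fact'' that all geodesics from an arbitrary $x_0$ meet again at time $L/2$. That is the Wiedersehen property, which by Green's theorem \cite{Gre} forces the metric to be round. So it is false for non-round Zoll spheres. You implicitly use it again when you assert that every geodesic from $p$ reaches $q$, and there it would make the rigidity argument circular.

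The rigidity argument has a genuine gap, and it is not where you place it. The step you call the main obstacle is trivial: $\Phi_0$ based at any $x_0$ is a sweepout, so $W_d=L/2$ immediately gives $\max_y d_g(x_0,y)=L/2$ for every $x_0$. What is unjustified is the next step. Knowing that each $x$ has some point $\bar x$ at distance $L/2$ only tells you that a closed geodesic containing a minimizing segment from $x$ to $\bar x$ has both halves minimizing. It gives no control on the other geodesics through $x$. So you have not shown that $\bar x$ is unique, that $x\mapsto\bar x$ is an isometric involution, or that $\mathrm{inj}(\mathbb{S}^2,g)=L/2$, and the Blaschke/Green conclusion is out of reach. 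The critical pair from \cite{MonRib} is information about a single pair and does not help either.

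The missing idea is to test $W_d$ against sweepouts adapted to an arbitrary closed geodesic $\sigma$, rather than against $\Phi_0$:
\begin{itemize}
\item Take the Birkhoff sweepout of Lemma \ref{optimal-sweepout-lemma}, with $\Sigma_0=\sigma$ and $L_g(\Sigma_t)<L$ for $t\ne 0$.
\item Pair $F(\overline{c}_t(s))$ with $F(\overline{c}_t(t_0))$. Pairs on $\Sigma_t$, $t\neq 0$, are at distance at most $\frac12 L_g(\Sigma_t)<L/2$.
\item Hence $W_d=L/2$ forces the maximum to occur on $\sigma$ at the half-way point, i.e.\ $d_g(\sigma(t_0),\sigma(t_0+1/2))=L/2$ for every $t_0$.
\item Since $\sigma$ is arbitrary, every geodesic arc of length $L/2$ is minimizing.
\end{itemize}
That last statement is exactly the intermediate conclusion you wanted. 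The paper then concludes via Theorem C of \cite{AmbMonSan} and Theorem \ref{theorem-zoll-rigidity} (which uses Bangert's chord theorem), or alternatively via Green's theorem.
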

    }

    In fact, the inequality established in Theorem \ref{teo-E--Wd-vs-wB} holds more generally for any Riemannian sphere, as shown in Proposition \ref{thm-comparison} (this fact is essentially contained in \cite{MonRib}, Theorem 1.12). It is interesting to compare it with the inequality in Theorem \ref{main-theorem}. In Example \ref{example-supS>Wd} we construct Riemannian spheres $(\mathbb{S}^2,g)$ for which the estimate from below for $w_B(\mathbb{S}^2,g)$ in terms of the min-max width of certain embedded circles is strictly better than the estimate in terms of $W_d(\mathbb{S}^2,g)$.
    
    \begin{figure}[h]        \includegraphics[scale=.3]{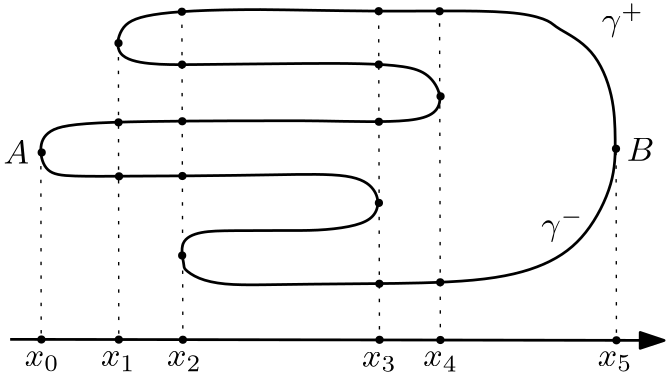}
            \caption{The planar closed curve $\gamma(t)=(x(t),y(t))$ above is in generic position with respect to the foliation by vertical lines, and each vertical line contains at most one critical point of the function $x(t)$. In order to produce a sweepout of $\gamma$ by pairs of points, starting at $A$ and ending at $B$, in such way that each pair is contained in the same vertical line, move one point in the arc $\gamma^+$ and the other point in the arc $\gamma^{-}$, so that their common coordinate $x(t)$ moves continuously forming the sequence $x_0$,  $x_3$, $x_2$, $x_4$, $x_2$, $x_3$, $x_1$, $x_3$, $x_2$, $x_5$, being monotone while moving between each two consecutive values. Theorem \ref{prop.algoritmo.v2} generalizes the basic algorithm behind this construction (see also Remark \ref{rmk-mountain-climbing}).}\label{figura}
    \end{figure}

    The proofs of all of the main theorems are underpinned by a method to induce a sweepout by pairs of points on an embedded circle under the presence of a Morse function such that each of its critical levels contains exactly one critical point (Theorem \ref{prop.algoritmo.v2}). Each pair of points of the induced sweepout is contained in the same level set of the Morse function. The proof of Theorem \ref{prop.algoritmo.v2} shows that there exists essentially a unique sweepout with this property. See Figure \ref{figura} for an illustration of it in the context of Theorem \ref{thm-w>S}. 
    
    In typical applications of this method, this Morse function is close to another function that encapsulates geometric information about the ambient manifold, and whose level sets have uniformly bounded diameters or lengths. For the proof of Theorem \ref{main-theorem}, on the other hand, we had to overcome new difficulties, arising from the fact that a general sweepout of a sphere does not define a foliation (see Theorem \ref{thm.compare-Birkhoff}).
    
    The rigidity results for Zoll spheres rely on a theorem of Bangert, which classifies compact Riemannian manifolds with boundary whose chords have all the same length \cite{Ban-chords}.

    We did not make use of LLMs to conduct the research that led to this article. This paper does not contain AI-generated text.

    The paper is organized as follows. Section \ref{section-Sweepouts by pairs of points} is a preliminary section, where we recall important concepts about sweepouts by pairs of points and discuss elementary facts about them. Section \ref{section::induced-sweepouts} is dedicated to the construction of sweepouts by pairs of points in the presence of Morse functions and to the first main application of this tool, namely, Theorem \ref{thm-w>S}. Section \ref{section:Sweepouts by pairs of points induced by Birkhoff sweepouts} explains how to construct a sweepout by pairs of points on an embedded circle out of a Birkhoff sweepout of a sphere, and how to use that construction to prove Theorem \ref{main-theorem}. Section \ref{section::non-compact-case} is dedicated to the analysis of non-compact manifolds. Section \ref{subsec-statements} contains the key results underlying the proof of Theorem \ref{theorem-non-compact}, which are of independent interest (see Theorem \ref{thm-widts-->dist-lines} and Theorem \ref{thm-diam-->width}). Section \ref{subsec-proof-thm-B} contains the proof of Theorem \ref{theorem-non-compact}, and Section \ref{subsec-further-res} contains another application of Theorem \ref{thm-diam-->width}, which concerns the min-max width of systoles and estimates for their lengths.
   Section \ref{section::rigidity-results} contains the rigidity results for Zoll spheres. In Section \ref{section::Examples}, we collect three examples, which are relevant to clarify the sharpness of our results.

    \section{Definitions and preliminary results}\label{section-Sweepouts by pairs of points}

 We recapitulate the definition and basic properties of sweepouts of embedded circles, and the definition and basic methods of estimation of the min-max width of embedded circles. This section is based in \cite{AmbMonSan} and \cite{AmbMon}.

Let $\gamma$ be an embedded circle in an  Riemannian manifold $(M,g)$. The space $\mathcal{P}_{\gamma}$ of subsets of $\gamma$ consisting of one or two points is homeomorphic to the M\"obius band, and the singletons are its boundary points. The real projective plane obtained by the quotient of $\mathcal{P}_{\gamma}$ by the identification of all singletons as a single point $[\ast]$ is denoted by $\mathcal{P}_{\gamma}^{\ast}$.

There is a simple geometric construction of the orientable $2$-covers of the spaces $\mathcal{P}_{\gamma}$ and $\mathcal{P}^*_{\gamma}$. It is useful in order to describe a meaningful interpretation of the class of sweepouts that is defined below. Fix an orientation on $\gamma$. An oriented closed arc $C\subset \gamma$ is a closed connected subset of $\gamma$ equipped with a choice of initial and final points. An oriented closed arc admits a unique positive parametrization $C : [0,1] \rightarrow \gamma$ proportional to arc length with possible self-intersections at the endpoints only, in the case that the initial and final points, $C(0)$ and $C(1)$ respectively, coincide. We use these parametrizations to endow the space of closed oriented arcs of $\gamma$ with the uniform convergence topology.

Let $\mathcal{O}$ denote the space of ordered pairs $(C_1,C_2)$ of oriented closed arcs of $\gamma$ such that: (a) the initial point of $C_2$ coincides with the final point of $C_1$, (b) the final point of $C_2$ coincides with the initial point of $C_1$, and (c) the concatenation of the parametrizations of $C_1$ and $C_2$ is a full parametrization of $\gamma$. Condition (c) implies that if one of the arcs $C_i$ degenerates to a point $x\in \gamma$, the other element in the pair is the arc of $\gamma$ starting and ending at $x$ that makes a complete positive turn around $\gamma$. The space $\mathcal{O}$ is homeomorphic to a cylinder.

Let $\mathcal{O}_*$ be the two-sphere obtained by the quotient of $\mathcal{O}$ by the equivalence relation that separately identifies all pairs $(C_1,C_2)\in \mathcal{O}$ where $C_1$ is a degenerate arc, and all pairs $(C_1,C_2)\in \mathcal{O}$ where $C_2$ is degenerate. In the quotient $\mathcal{O}_*$, the two points obtained by these identifications are denoted, respectively, by $([*],\gamma)$ and $(\gamma,[*])$. The topology of the space of oriented closed arcs induces topologies on $\mathcal{O}$ and, through the quotient map, on $\mathcal{O}_*$.
		
The map $\pi:\mathcal{O}\rightarrow \mathcal{P}_{\gamma}$, defined so that $\pi((C_1,C_2))$ is the set of the endpoints of either $C_1$ or $C_2$, is the oriented double cover of $\mathcal{P}_{\gamma}$. This map descends to the oriented double cover $\pi_*:\mathcal{O}_*\rightarrow \mathcal{P}_{\gamma}^*$.

\begin{definition}\label{defiweepout}

A family $\{p_t,q_t\}_{t\in [0,1]}$ of pairs of points of $\gamma$ is a \textit{sweepout} of $\gamma$ if the following properties are satisfied:
\begin{itemize}

\item[$(i)$] $p_0=q_0$ and $p_1=q_1$,

\item[$(ii)$] $t\in [0,1] \mapsto p_t\in \gamma$ and $t\in [0,1] \mapsto q_t\in \gamma$ are continuous mappings,

\item[$(iii)$] there exists a continuous family of oriented closed arcs  $C_t\subset \gamma$, for $t \in [0,1]$, whose initial and final points are $p_t$ and $q_t$, respectively. Moreover, $C_0$ is the degenerate arc represented by the point $p_0=q_0$, and $C_1$ is represented by the arc that covers $\gamma$ exactly once, with initial and final points $p_1=q_1$.
\end{itemize}
\end{definition}

Given a sweepout $\{p_t,q_t\}_{t \in [0.1]}$ of an embedded circle $\gamma$ in a Riemannian manifold $(M,g)$, it is clear that $t\in [0,1]\mapsto \{p_t,q_t\} \in \mathcal{P}_{\gamma}$ defines a path whose initial and final points lie in the boundary of $\mathcal{P}_\gamma$. Thus the quotient map $q^{\mathcal{P}}: \mathcal{P}_\gamma\rightarrow \mathcal{P}_\gamma^*$ maps it to a loop based at $[*]$. Using the spaces $\mathcal{P}_\gamma$, $\mathcal{P}_\gamma^*$ and their covers described earlier, it is straightforward to obtain the following criterion to detect paths in $\mathcal{P}_\gamma^*$ that are sweepouts of $\gamma$.

\begin{prop}\label{prop-sweepout-characterization}
Let $\gamma$ be an embedded circle in a Riemannian manifold $(M,g)$. If $\{p_t,q_t\}_{t \in [0,1]}$ is a sweepout of $\gamma$, then $t \in [0,1] \mapsto q^{\mathcal{P}}(\{p_t,q_t\})$ is a homotopically non-trivial loop in $\mathcal{P}_{\gamma}^*$ based at $[*]$.

Conversely, given a homotopically non-trivial loop $\psi:[0,1]\rightarrow \mathcal{P}_{\gamma}^*$ based at $[*]$, if there exists a continuous map $\phi : [0,1]\rightarrow \mathcal{P}_{\gamma}$ such that $q^{\mathcal{P}}\circ \phi = \psi$,
then there exists a sweepout $\{p_t,q_t\}_{t \in [0,1]}$ of $\gamma$ such that $\phi(t)=\{p_t,q_t\}$ for all $t \in [0,1]$.
\end{prop}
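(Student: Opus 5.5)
We argue through the orientable double cover $\pi_*:\mathcal{O}_*\to\mathcal{P}_\gamma^*$, which is the universal cover since $\mathcal{O}_*$ is a two-sphere and $\mathcal{P}^*_\gamma$ is a real projective plane. By the standard lifting criterion, a loop in $\mathcal{P}^*_\gamma$ based at $[*]$ is homotopically non-trivial exactly when its lift starting at one of the two preimages of $[*]$ ends at the other. The fibre over $[*]$ is $\{([*],\gamma),(\gamma,[*])\}$. So the first statement amounts to finding a lift of the loop from $([*],\gamma)$ to $(\gamma,[*])$, and the converse amounts to reading off the arcs $C_t$ from such a lift.

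\emph{Direct part.} Let $\{p_t,q_t\}$ be a sweepout with arcs $C_t$ as in Definition \ref{defiweepout}(iii). For each $t$, let $D_t$ be the complementary oriented arc from $q_t$ to $p_t$, so that the concatenation of $C_t$ and $D_t$ covers $\gamma$ once positively. We check that $t\mapsto D_t$ is continuous in the uniform topology on parametrizations. This holds because the positive parametrization of $D_t$ is determined by its endpoints together with the length $L(\gamma)-L(C_t)$, and $L(C_t)$ varies continuously since $C_t$ does.

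Then $\tilde\phi(t)=[(C_t,D_t)]\in\mathcal{O}_*$ is a continuous path, with $\pi_*\circ\tilde\phi(t)=q^{\mathcal{P}}(\{p_t,q_t\})$. Continuity at $t=0,1$ holds in the quotient $\mathcal{O}_*$. At $t=0$, $C_0$ is degenerate, so $\tilde\phi(0)=([*],\gamma)$. At $t=1$, $C_1$ covers $\gamma$ once and $D_1$ is degenerate, so $\tilde\phi(1)=(\gamma,[*])$. Hence the lift joins distinct points of the fibre, and the loop is non-trivial.

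\emph{Converse.} Take the lift $\tilde\psi$ of $\psi$ with $\tilde\psi(0)=([*],\gamma)$. Non-triviality gives $\tilde\psi(1)=(\gamma,[*])$. Write $\tilde\psi(t)=[(C_t,D_t)]$. Note that $\tilde\psi(t)$ may pass through the two special points at intermediate times, but only when $\psi(t)=[*]$.

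Since $\phi$ lifts $\psi$ to $\mathcal{P}_\gamma$, at such times $\phi(t)$ is a singleton $\{x_t\}$, and $x_t$ depends continuously on $t$. We define $C_t$ as follows:
\begin{itemize}
\item at times where $\tilde\psi(t)$ is not a special point, $C_t$ is the first component of $\tilde\psi(t)$, which is well defined;
\item at times where $\tilde\psi(t)=([*],\gamma)$, $C_t$ is the degenerate arc at $x_t$;
\item at times where $\tilde\psi(t)=(\gamma,[*])$, $C_t$ is the full arc based at $x_t$.
\end{itemize}
Let $p_t,q_t$ be the initial and final points of $C_t$. Then $\{p_t,q_t\}=\phi(t)$, and conditions (i) and (iii) hold at the endpoints.

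\emph{Main obstacle: continuity of $t\mapsto C_t$ at special times.} Near a time $t_0$ where $\tilde\psi(t_0)$ is a special point, we use two facts. First, continuity of $\tilde\psi$ in $\mathcal{O}_*$ forces either $L(C_t)\to0$ or $L(C_t)\to L(\gamma)$. Second, continuity of $\phi$ in $\mathcal{P}_\gamma$ forces both endpoints of $C_t$ to converge to $x_{t_0}$. Together these give uniform convergence of the parametrizations to the degenerate arc, respectively the full arc, based at $x_{t_0}$.

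Continuity of $p_t$ and $q_t$ separately then follows from continuity of $C_t$, since the endpoint maps are continuous in the uniform topology. This gives (ii) and completes the construction of the sweepout with $\phi(t)=\{p_t,q_t\}$.
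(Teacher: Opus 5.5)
Your proposal is correct and follows the route the paper has in mind. The paper only remarks that the criterion is straightforward from the double covers $\pi:\mathcal{O}\to\mathcal{P}_\gamma$ and $\pi_*:\mathcal{O}_*\to\mathcal{P}^*_\gamma$, and your argument (lift to the sphere $\mathcal{O}_*$ and compare the endpoints of the lift in the fibre $\{([*],\gamma),(\gamma,[*])\}$) is exactly that, with the one delicate point handled correctly: at times where $\psi(t)=[*]$ the lift forgets the base point of the degenerate or full arc, and you recover it from $\phi$ together with the length control that continuity in $\mathcal{O}_*$ provides.
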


We recall next the definition of the min-max width of embedded circles introduced in \cite{AmbMonSan}. Let us use $d_g$ to represent the Riemannian distance in $(M,g)$. 

\begin{definition}\label{width-of-circle}
Let $\gamma$ be an embedded circle in a complete Riemannian manifold $(M,g)$. The min-max width of $\gamma$ is defined by
$$
\mathcal{S}_g(\gamma) = \inf_{\substack{\{p_t,q_t\} \\ \text{sweepout}}}\left(  \max_{t\in [0,1]} d_g(p_t,q_t) \right).
$$
\end{definition}

The next result, proven in \cite{AmbMonSan}, contains a basic and effective method to estimate the min-max width $\mathcal{S}_g(\gamma)$ from below.

\begin{lem}[See \cite{AmbMonSan}, Lemma 5.1]\label{lem-nofixedpts-AMS}
    Let $\varphi: \mathbb{S}^1\rightarrow \mathbb{S}^1$ be a continuous map without fixed points. Then, every sweepout $\{p_t, q_t\}$ of $\mathbb{S}^1$ contains a pair of points of the form $\{p_{t_0}, \varphi(p_{t_0})\}$, for some $t_0 \in [0,1]$.
\end{lem}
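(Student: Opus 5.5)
The plan is to lift the sweepout to the oriented double cover $\mathcal{O}$ and turn the statement into an intermediate-value argument for a real-valued "phase" function. Fix an orientation of $\mathbb{S}^1$ and identify it with $\mathbb{R}/\mathbb{Z}$.

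\emph{Step 1: the lift.} By Definition \ref{defiweepout}, there is a continuous family of oriented arcs $C_t$ from $p_t$ to $q_t$, with $C_0$ degenerate and $C_1$ the full circle. Choose a continuous lift $\tilde p_t\in\mathbb{R}$ of $p_t$. Since the arcs are parametrized proportionally to arc length and vary uniformly continuously, their lengths $\ell_t\in[0,1]$ vary continuously. Setting $\tilde q_t=\tilde p_t+\ell_t$ gives a continuous lift of $q_t$, with $\ell_0=0$ and $\ell_1=1$.

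\emph{Step 2: the displacement.} Lift $\varphi$ to a continuous $\tilde\varphi:\mathbb{R}\to\mathbb{R}$ with $\tilde\varphi(x+1)=\tilde\varphi(x)+k$, where $k=\deg\varphi$. Since $\varphi$ has no fixed points, $\tilde\varphi(x)-x\notin\mathbb{Z}$ for all $x$. The function $\tilde\varphi(x)-x$ is continuous, so it stays in a single interval $(m,m+1)$. This function is $1$-periodic only when $k=1$, but the interval confinement holds in every case simply by connectedness of $\mathbb{R}$. Replacing $\tilde\varphi$ by $\tilde\varphi-m$, we may assume
$$0<\tilde\varphi(x)-x<1 \quad \text{for all } x\in\mathbb{R}.$$
Then $\delta(x):=\tilde\varphi(x)-x$ satisfies $\delta(x)\in(0,1)$.

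\emph{Step 3: intermediate value.} Consider
$$h(t)=\ell_t-\delta(\tilde p_t), \qquad t\in[0,1].$$
It is continuous, with $h(0)=-\delta(\tilde p_0)<0$ and $h(1)=1-\delta(\tilde p_1)>0$. Hence there is $t_0$ with $\ell_{t_0}=\delta(\tilde p_{t_0})$, that is, $\tilde q_{t_0}=\tilde\varphi(\tilde p_{t_0})$. Projecting back to the circle gives $q_{t_0}=\varphi(p_{t_0})$, which is the required pair $\{p_{t_0},\varphi(p_{t_0})\}$.

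The main point to check is Step 1: continuity of $\ell_t$ and of the lift, especially near degenerate arcs and near the full arc, where the endpoints coincide. This follows from the uniform convergence of the proportional parametrizations $C_t:[0,1]\to\mathbb{S}^1$. Indeed, lifting $C_t$ continuously in $t$ with $\tilde C_t(0)=\tilde p_t$, the quantity $\ell_t=\tilde C_t(1)-\tilde C_t(0)$ is continuous. The only further care needed in Step 2 is the normalization of the lift when $k\neq 1$, which the connectedness argument above handles.
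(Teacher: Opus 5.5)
Your proof is correct and complete. This paper does not prove the lemma itself; it quotes it from \cite{AmbMonSan}. Your argument is the natural one in the framework of the oriented double cover $\mathcal{O}$ from the preliminaries, written in coordinates:
\begin{itemize}
\item The pair $(\tilde p \bmod 1,\ell)$ identifies the cylinder $\mathcal{O}$ with $\mathbb{S}^1\times[0,1]$.
\item The lifted sweepout is a path from the boundary circle $\{\ell=0\}$, where $C_1$ is degenerate, to the boundary circle $\{\ell=1\}$, where $C_2$ is degenerate.
\item The pairs $(C_1,C_2)$ with $C_1$ running from $x$ to $\varphi(x)$ form the graph $\{\ell=\delta(x)\}$. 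This is an essential circle in the interior of $\mathcal{O}$ that separates the two boundary components.
\end{itemize}
Your function $h$ makes this separation explicit through the intermediate value theorem.

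Step 1 is handled correctly. You use the joint continuity of $(t,s)\mapsto C_t(s)$, lift over $[0,1]^2$, and read off $\ell_t=\tilde C_t(1)-\tilde C_t(0)$. This is exactly what is needed at times where $C_t$ is degenerate or is the full circle.

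A small simplification in Step 2: since $\delta(x+1)=\delta(x)+k-1$, confining $\delta$ to an open interval of length one forces $k=\deg\varphi=1$. So $\delta$ is in fact $1$-periodic, and your case distinction on $k$ is harmless but unnecessary.
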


In order to illustrate the use of Lemma \ref{lem-nofixedpts-AMS}, we formulate the next proposition, which will be applied in the analysis of examples and in the proof of Theorem \ref{theorem-non-compact}.

\begin{prop}\label{prop-widthestimate-triangle}
Let $(M,g)$ be a complete Riemanninan manifold and $\gamma$ be a piecewise smooth embedded circle in $M$ with three distinct vertices $x$, $y$, and $z$. Let $\gamma_{xy}$, $\gamma_{yz}$ and $\gamma_{zx}$ denote the smooth closed arcs of $\gamma$ that join $x$ to $y$, $y$ to $z$ and $x$ to $z$, respectively. If there exists $L>0$ such that 
\begin{equation*}
    d(x, \gamma_{yz}), d(y, \gamma_{zx}), d(z, \gamma_{xy}) > L,
\end{equation*}
    then there exists an embedded circle $\tilde{\gamma}$, arbitrarily close to $\gamma$ in the Hausdorff topology, such that $S(\tilde{\gamma}) > L$.  
\end{prop}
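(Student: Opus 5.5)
The idea is to apply Lemma \ref{lem-nofixedpts-AMS} with a fixed-point-free map $\varphi:\gamma\to\gamma$ that moves every point by more than $L$. First I record what the hypothesis gives. Since $y,z\in\gamma_{yz}$, $x,z\in\gamma_{zx}$ and $x,y\in\gamma_{xy}$, the three vertices are pairwise at distance $>L$. Moreover, every $p\in\gamma_{xy}$ satisfies $d(p,z)>L$, and similarly every point of $\gamma_{yz}$ is at distance $>L$ from $x$, and every point of $\gamma_{zx}$ is at distance $>L$ from $y$. By compactness and continuity of the distance, there is $\delta>0$ such that every point $p\in\gamma$ within distance $\delta$ of $y$ still satisfies $d(p,\gamma_{zx})>L$, and analogously near $x$ and near $z$.

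Next I build $\varphi$. Away from the vertices, $\varphi$ is constant equal to the opposite vertex: $\varphi\equiv z$ on $\gamma_{xy}$, $\varphi\equiv x$ on $\gamma_{yz}$ and $\varphi\equiv y$ on $\gamma_{zx}$. This is discontinuous only at the vertices; for instance, at $y$ the value jumps from $z$ to $x$. I repair this on a small arc $I_y\subset\gamma$ around $y$, contained in the $\delta$-neighbourhood of $y$. As $p$ traverses $I_y$, I let $\varphi(p)$ run continuously along $\gamma_{zx}$ from $z$ to $x$. This is allowed because $\gamma_{zx}$ is the arc joining $z$ to $x$ that avoids $y$, and all of its points are at distance $>L$ from every $p\in I_y$. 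I do the same at $x$ (running along $\gamma_{yz}$) and at $z$ (running along $\gamma_{xy}$), and I keep $\varphi$ constant at the opposite vertex on the rest of each arc.

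The resulting map is continuous and satisfies $d(p,\varphi(p))>L>0$ for every $p\in\gamma$; in particular it has no fixed points. By compactness, $m:=\min_{p}d(p,\varphi(p))>L$. Composing with a parametrization $\mathbb{S}^1\to\gamma$, Lemma \ref{lem-nofixedpts-AMS} shows that every sweepout contains a pair $\{p_{t_0},\varphi(p_{t_0})\}$, so the maximum of the distance along the sweepout is at least $m$. Hence $\mathcal{S}_g(\gamma)\ge m>L$.

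It remains to pass to a genuine (smooth) embedded circle $\tilde\gamma$. I round off the three corners inside tiny balls around $x,y,z$, producing $\tilde\gamma$ that is Hausdorff-close to $\gamma$. I choose new marked points $\tilde x,\tilde y,\tilde z\in\tilde\gamma$ near $x,y,z$. Then the corresponding arcs $\tilde\gamma_{\tilde x\tilde y}$, $\tilde\gamma_{\tilde y\tilde z}$ and $\tilde\gamma_{\tilde z\tilde x}$ are Hausdorff-close to $\gamma_{xy}$, $\gamma_{yz}$ and $\gamma_{zx}$. Since the hypotheses are strict inequalities between compact sets, they persist for $\tilde\gamma$ with the same $L$. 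Note that the construction above used only these three inequalities, not smoothness of the arcs. Rerunning it on $\tilde\gamma$ therefore gives $\mathcal{S}_g(\tilde\gamma)>L$.

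The only delicate step is the bookkeeping in the smoothing: choosing $\tilde x,\tilde y,\tilde z$ so that the new arcs are Hausdorff-close to the old ones and the three strict inequalities survive.
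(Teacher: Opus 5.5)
Your proof is correct and follows essentially the same route as the paper: round off the corners, then apply Lemma~\ref{lem-nofixedpts-AMS} to a fixed-point-free self-map of the circle that sends points near each vertex onto the opposite arc, so that every point moves by more than $L$. The differences are cosmetic: the paper takes $\varphi$ to be an involution built directly on $\tilde\gamma$, whereas your map (constant at the opposite vertex away from the corners) is not an involution, which is harmless because the lemma only needs the absence of fixed points; you also make explicit the compactness step $\min_p d(p,\varphi(p))>L$ that gives the strict inequality $\mathcal{S}_g(\tilde\gamma)>L$.
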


\begin{proof}
    By slightly rounding off the curve $\gamma$ near the points $x$, $y$, and $z$, one obtains an embedded circle $\tilde{\gamma}$ for which $S(\tilde{\gamma})>L$. Indeed, it is straightforward to construct a continuous involution $\varphi$ of $\tilde{\gamma}$ that maps points near the vertex $x$ to points that lie on the smooth closed arc $\gamma_{yz}$, and analogously for points near $y$ or $z$. In particular, $d(p, \varphi(p)) > L$ for all $p\in \tilde{\gamma}$. Lemma \ref{lem-nofixedpts-AMS} implies that every sweepout of $\tilde{\gamma}$ contains a pair of points of the form $\{p_0, \varphi(p_0)\}$ for some $p_0 \in \tilde \gamma$. Thus $\mathcal{S}_g(\tilde \gamma) > L$.

\end{proof}

\section{Sweepouts by pairs of points induced by Morse functions} \label{section::induced-sweepouts}

    The main goal of this section is to develop a method to construct a sweepout of an embedded circle by pairs of points given a Morse function. This tool, Theorem \ref{prop.algoritmo.v2}, will be used in all subsequent sections. We give a first immediate application of it, namely, Theorem \ref{thm-w>S}.

    \begin{thm}\label{prop.algoritmo.v2}
    Let $f:\mathbb{S}^1\rightarrow \mathbb{R}$ be a Morse function such that each critical level contains a unique critical point. Then there exists a sweepout $\{p_t, q_t\}_{t\in [0,1]}$ of $\mathbb{S}^1$ such that $f(p_t) = f(q_t)$, for every $t \in [0,1]$.
    
    \end{thm}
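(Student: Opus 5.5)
The natural route is the mountain-climbing argument. Let $m$ and $M$ be the unique points where $f$ attains its minimum and maximum. They split $\mathbb{S}^1$ into two closed arcs $\gamma^+$ and $\gamma^-$, each parametrized from $m$ to $M$ by $[0,1]$. Put $a=\gamma^+$ and $b=\gamma^-$, viewed as maps $[0,1]\to\mathbb{R}$ through $f$. Then $f\circ a(0)=f\circ b(0)=f(m)$, $f\circ a(1)=f\circ b(1)=f(M)$, and both functions are piecewise monotone with finitely many turning points. The turning points are exactly the critical points of $f$ in the interiors of the arcs.

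Consider the set
\[
Z=\{(s,u)\in[0,1]^2 : f(a(s))=f(b(u))\}.
\]
I will show that $Z$ contains a path from $(0,0)$ to $(1,1)$. Taking $p_t=a(s(t))$ and $q_t=b(u(t))$ along such a path gives the sweepout. At $t=0$ both points equal $m$, and at $t=1$ both equal $M$. The arc $C_t$ can be taken as the union of $a([0,s(t)])$ and $b([0,u(t)])$. It runs from $p_t$ backwards through $m$ to $q_t$, with orientation fixed consistently. It varies continuously, starts degenerate at $m$, and ends as the whole circle from $M$ to $M$. So conditions (i)--(iii) of Definition \ref{defiweepout} hold.

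To build the path, subdivide $[0,1]$ in $s$ and in $u$ at all turning points, so that both functions are strictly monotone on each cell. On each rectangle $I\times J$, the set $Z\cap(I\times J)$ is then the graph of a monotone homeomorphism between subintervals, so it is a segment-like arc, possibly empty or a single point. Hence $Z$ is a finite graph. Its vertices lie on the grid lines, and a point of $Z$ on a grid line is either a corner or a point where exactly one of the two coordinates is at a turning point.

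Next compute degrees. At an interior point where neither coordinate is at a turning point, $Z$ is locally an arc, so the degree is 2. At $(0,0)$ and $(1,1)$ the degree is 1, because there both functions are extremal and locally $Z$ is a single arc: for example, near $(0,0)$ both $f\circ a$ and $f\circ b$ increase. At a point where exactly one coordinate, say $s$, is at a local maximum or minimum $c$ of $f\circ a$, the other coordinate $u$ is regular, so $f\circ b$ is strictly monotone near $u$. Then $Z$ locally consists of two branches, one from each side of $s$, both going toward the same side of $u$. The degree is 2.

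The hypothesis that each critical level contains a unique critical point is exactly what excludes the bad case where both coordinates are turning points simultaneously. That case could produce degree-4 or degree-0 crossings. The same hypothesis excludes $(0,u)$ or $(s,0)$ boundary points other than the corners, since $m$ and $M$ are strict unique extrema. Boundary edges of the square meet $Z$ only at $(0,0)$ and $(1,1)$ for the same reason: $f(a(0))=f(m)$ is attained on $b$ only at $u=0$.

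So $Z$ is a finite graph in which every vertex has degree 2 except $(0,0)$ and $(1,1)$, which have degree 1. By the handshake lemma, the component containing $(0,0)$ must contain another odd-degree vertex, which can only be $(1,1)$. That component is therefore an embedded arc joining the two corners, and parametrizing it by $t\in[0,1]$ gives the desired path.

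The main obstacle is the local degree analysis near turning points. It must be done carefully: the Morse condition guarantees the local structure at a strict local extremum of $f\circ a$ paired with a regular point of $f\circ b$, where the two branches of $Z$ form a fold. One also has to handle the case where $Z$ restricted to a cell is a single point, that is, touching at a corner. Genericity rules this case out except at the two endpoints.
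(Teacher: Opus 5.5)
Your argument is correct. It is the classical mountain-climbing proof (cf.\ Remark~\ref{rmk-mountain-climbing}), and it takes a genuinely different route from the paper's.

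The paper works on all of $\mathcal{P}$ via the double cover $\mathcal{O}$, in the following steps:
\begin{enumerate}
\item It shows that $0$ is a regular value of $F((C_1,C_2))=f(p_1)-f(p_0)$ on $int(\mathcal{O})$. This is where the one-critical-point-per-level hypothesis is used.
\item It uses the Morse normal form to describe how the resulting curve $\Omega_f$ accumulates on the singletons $\{p\}$ with $p$ critical.
\item It deduces that $\Omega_f$ is a union of circles and of intervals joining critical singletons.
\item It checks that the component through $\{A\}$ consists of pairs separated by $A$ and $B$, so it must end at $\{B\}$.
\item It certifies the sweepout property through the homotopy criterion of Proposition~\ref{prop-sweepout-characterization}.
\end{enumerate}

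You instead restrict from the outset to pairs with one point on each arc of $\mathbb{S}^1\setminus\{m,M\}$, parametrized by a square. You replace the regular value theorem by a cellwise monotonicity analysis. The hypothesis enters at the same place (no two turning points at the same height). The points where the paper's curve passes through a pair containing a critical point become your fold vertices. Beyond that, three things simplify. The boundary analysis collapses to the two corners. The intervals of $\Omega_f$ ending at other critical singletons (pairs lying on a single arc) never appear. And you verify the sweepout property directly by exhibiting the arcs $C_t$, rather than through lifts and homotopy classes.

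Your route is more elementary: it needs only continuity and piecewise strict monotonicity, so it extends verbatim to piecewise monotone functions under the same genericity assumption. The paper's route gives a global description of all equal-height pairs in $\mathcal{P}$, and it stays inside the $\mathcal{P}^\ast\cong\mathbb{RP}^2$ framework used for the min-max theory.
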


    \begin{rmk} \label{rmk-mountain-climbing}
        The above theorem and its proof are closely related to the so-called Mountain Climbing Problem. We only became aware of it after the first version of this paper was made available in the arXiv. We thank Caio B. Rodrigues for drawing our attention to it. 
    \end{rmk}
    \begin{proof}
        Let $\mathcal{P} = \mathcal{P}_{\mathbb{S}^1}$ and $\mathcal{O}$ be as introduced in  Section \ref{section-Sweepouts by pairs of points}. Let $int(\mathcal{O})$ denote the interior of the cylinder and consider the smooth function $F : int(\mathcal{O}) \rightarrow \mathbb{R}$ defined by $F((C_1, C_2)) := f(p_1) - f(p_0)$ where $p_0$ and $p_1$ are the initial and final points of the arc $C_1$, assuming that this arc is parametrized in the counterclockwise direction. 

We claim that $0$ is a regular value of $F$. Indeed, given $(C, D) \in F^{-1}(0)$, consider a monotone smooth parametrization $\alpha : [-\varepsilon, 1+\varepsilon] \rightarrow \mathbb{S}^1$ in the counterclockwise direction such that $\alpha(0) = p_0 \neq \alpha(1) = p_1$ are the initial and final points of $C$, respectively, and whose image is a proper arc of $\mathbb{S}^1$. Let $(C_t, D_t) \in \mathcal{O}$ be such that $C_t$ is the arc beginning at $\alpha(t)$ and ending at $\alpha(t+1)$, for $-\varepsilon \leq t \leq \varepsilon$. Possibly after decreasing $\varepsilon$, if necessary, we have $(C_t, D_t) \in int(\mathcal{O})$ and 
$$
\left.\frac{d}{dt}\right|_{t=0} F((C_t, D_t)) = df_{p_1} (\alpha'(1)) - df_{p_0} (\alpha'(0)). 
$$
Since $F((C_0, D_0)) = F((C, D)) = 0$, we have $f(p_1)=f(p_0)$. In particular, by hypothesis, at most one of those different points is a critical point of $f$. Therefore, it is possible to choose $\alpha$ in such a way that the derivative above is different from zero. Since $(C,D)\in F^{-1}(0)$ is arbitrary, the claim is proved.

Consider the set
    \begin{equation*}
        \Omega_f := \Big\{\{p,q\} \in \mathcal{P}\setminus \partial \mathcal{P} : f(p)=f(q) \Big \} \cup \Big \{\{p\} \in \mathcal{P}: p \text{ is critical point of } f \Big\}.
    \end{equation*}

The set $F^{-1}(0)$ is invariant under the deck transformation $(C_1,C_2) \mapsto (C_2,C_1)$. Thus $\Omega_f\setminus \partial \mathcal{P}=\pi(F^{-1}(0))$ is a smooth one-dimensional embedded submanifold, where $\pi:\mathcal{O}\rightarrow \mathcal{P}$ is the projection. We show that $\Omega_f$ is the closure of the subset $\pi(F^{-1}(0))\subset \mathcal{P}$. 

On the one hand, if $p$ is not a critical point of $f$, then $\{p\}$ is not in the closure of $\pi(F^{-1}(0))$ because $f$ is a diffeomorphism in a neighborhood of $p$. On the other hand, whenever $p$ is a critical point of $f$, the fact that $f$ is Morse gives us a parametrization $\beta$ of $\mathbb{S}^1$ near $p = \beta(0)$ such that $f(\beta(t)) = f(p) \pm t^2$. Then, there exists a neighborhood $U$ of $\{p\} \in \mathcal{P}$ and $\delta>0$ such that $\pi(F^{-1}(0))\cap U$ is the image of the path $t \in (0, \delta) \mapsto \{\beta(t), \beta(-t)\}$.

It follows from the above argument that $\Omega_f$ is a disjoint union of circles in $\mathcal{P}\setminus\partial \mathcal{P}$ and open intervals in $\mathcal{P}\setminus\partial \mathcal{P}$ whose extremities are connecting points $\{p\}\in\partial \mathcal{P}$ with $p$ a critical point of $f$. Let $\Omega_{\ast}$ denote the connected component of $\Omega_f$ that contains the point of $\partial \mathcal{P}$ corresponding to the point $A$ of global minimum of $f$. By the structure of $\Omega_f$, $\Omega_{\ast}$ has a second distinct point of accumulation on $\partial\mathcal{P}$, which corresponds to another critical point of $f$. We claim that it must be the point $B$ associated to the highest value of $f$. Indeed, the interior points of $\Omega_{\ast}$ are made of pairs $\{p,q\}$ with $p\neq q$ in different components of $\mathbb{S}^1\setminus \{A,B\}$. This holds for points of $\Omega_* \setminus \partial \mathcal{P}$ close to $\{A\}$ by the argument of the previous paragraph, and therefore holds in the connected set $\Omega_* \setminus \partial \mathcal{P}$. The other extremity of $\Omega_*$ is then either $A$ or $B$, but the argument in the previous paragraph excludes the first case.

Let $\phi: [0,1] \to \Omega_*$ be a homeomorphism with $\phi(0)=\{A\}$ and $\phi(1)=\{B\}$. In order to see that the resulting loop $\psi=q^{\mathcal{P}}\circ \phi:[0,1]\rightarrow \mathcal{P}^*$ determines sweepout, it is enough to check that $\psi$ is a non-trivial loop, by Proposition \ref{prop-sweepout-characterization}. Let $t\in[0,1]\mapsto (C_t,D_t)\in \mathcal{O}$ be the lift of $\phi$ where the initial arc $C_0$ is the degenerate arc with both endpoints at $A$. Notice that all pairs $\phi(t)$ have points in each of the two arcs of $\mathbb{S}^1$ determined by $A$ and $B$. This implies that $A$ belongs to the arcs $C_t$, for all $t\in[0,1]$. In particular, the final pair of arcs $(C_1, D_1)$ is such that $C_1$ starts and ends at $B$ and contains $A$. Therefore, $(C_1, D_1)$ is in the equivalence class $(\mathbb{S}^1, [\ast])$, while $(C_0,D_0)$ is in the equivalence class $([\ast],\mathbb{S}^1)$. Proposition \ref{prop-sweepout-characterization} implies that $\psi$ defines a sweepout $\{p_t,q_t\}_{t\in [0,1]}$ of $\mathbb{S}^1$.
\end{proof}

    Theorem \ref{prop.algoritmo.v2} is the key tool for the proof of Theorem \ref{thm-w>S}. Let $\gamma:\mathbb{S}^1\rightarrow \mathbb{R}^2$ be a simple closed curve. Given a unit vector $v \in \mathbb{S}^1$  the width of $\gamma$ with respect to $\theta$, denoted by $w_{v}(\gamma)$, is the minimum of $r>0$ such that there exist two lines perpendicular to $v$ which are at distance $r$ and contain the trace of $\gamma$ in the region between them. Recall that $w_v(\gamma)$ is a continuous function of $v$, and that the (classical) width of $\gamma$ is 
    \begin{equation*}
        w(\gamma) = \min \{w_v(\gamma) : v\in \mathbb{S}^1\}. 
    \end{equation*}

    \begin{thm}\label{thm.algoritmo}
    Let $\gamma$ be a simple closed planar curve. Then $\mathcal{S}(\gamma)\leq w (\gamma)$. 
    \end{thm}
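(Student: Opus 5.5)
The plan is to choose a direction $v$ realizing the width, $w_v(\gamma)=w(\gamma)$, and to take $f(x)=\langle \gamma(x), v^\perp\rangle$. This $f$ is the coordinate along the lines perpendicular to $v$, so each level set of $f$ on the curve consists of points lying on a single line parallel to $v$. Two points of $\gamma$ with the same value of $f$ then lie on a common line perpendicular to the slab. Since $\gamma$ sits between the two slab lines at distance $w_v(\gamma)$, their Euclidean distance is at most $w_v(\gamma)=w(\gamma)$.

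We would like to apply Theorem \ref{prop.algoritmo.v2} to $f$. In general, however, $f$ need not be Morse, need not have only one critical point per critical level, and $\gamma$ need not be smooth. So we approximate.

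\textbf{Step 1.} Given $\varepsilon>0$, perturb to a smooth embedded curve $\tilde\gamma$ that is $C^0$-close to $\gamma$ (within $\varepsilon$) and still lies in the slab enlarged by $\varepsilon$ on each side. Then perturb further, for instance by a small generic rotation of $v$ together with a small generic smooth perturbation of $\tilde\gamma$, so that $f\circ\tilde\gamma$ is Morse with distinct critical values. Morse functions with distinct critical values are generic, hence dense, so this costs only another $\varepsilon$.

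\textbf{Step 2.} Apply Theorem \ref{prop.algoritmo.v2} to obtain a sweepout $\{\tilde p_t,\tilde q_t\}$ of $\tilde\gamma$ with $f(\tilde p_t)=f(\tilde q_t)$. This gives $\max_t |\tilde p_t-\tilde q_t|\le w(\gamma)+C\varepsilon$, and hence $\mathcal{S}(\tilde\gamma)\le w(\gamma)+C\varepsilon$.

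\textbf{Step 3.} Transfer the bound back to $\gamma$. Fix a homeomorphism $h:\tilde\gamma\to\gamma$ moving points by at most $C\varepsilon$; for a $C^0$-small normal perturbation, nearest-point correspondence along a parametrization works. Then $\{h(\tilde p_t),h(\tilde q_t)\}$ is a sweepout of $\gamma$: a homeomorphism carries the continuous arcs $C_t$ to continuous arcs, the trivial arc to the trivial arc, and the full circle to the full circle. Its maximal distance is at most $w(\gamma)+C'\varepsilon$. Letting $\varepsilon\to0$ gives $\mathcal{S}(\gamma)\le w(\gamma)$.

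The main obstacle is Step 1. The perturbation must simultaneously keep the curve embedded, keep it inside a slightly thickened slab, make $f$ Morse with distinct critical values, and stay uniformly close to $\gamma$ via a homeomorphism. I would first smooth $\gamma$ by a standard $C^0$-small embedded approximation. Then I would add a small generic perturbation, or simply vary the direction $v$ slightly, using that for a smooth curve the height function is Morse with distinct critical values for almost every direction. A slightly tilted direction changes the slab width only by $O(\varepsilon)$, since $w_v$ is continuous in $v$.
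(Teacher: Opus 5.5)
Your proposal is correct and is essentially the paper's proof: apply Theorem \ref{prop.algoritmo.v2} to the height function in a generic direction near the optimal one, chosen so that it is Morse with one critical point per critical level, so that paired points lie on a common line crossing the slab; then use density of such directions and continuity of $w_v$ to reach $w(\gamma)$. The only difference is your preliminary smoothing and transfer (Steps 1 and 3), which the paper omits because it treats $\gamma$ as smooth; if you keep it, take $h=\gamma\circ\tilde\gamma^{-1}$ for uniformly close parametrizations rather than a normal-projection correspondence, since the latter presupposes regularity of $\gamma$.
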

    
 \begin{proof}
    Given $v\in \mathbb{S}^1$, let $f_v : \mathbb{R}^2\rightarrow \mathbb{R}$ be the map defined by $f_v(p) = \langle p, v\rangle$. Let $X$ be the dense set of vectors $v\in \mathbb{S}^1$ such that $f_v\circ \gamma:\mathbb{S}^1\rightarrow \mathbb{R}$ is a Morse function such that each critical level contains a single critical point. Given $v\in X$, let $u\in \mathbb{S}^1$ be orthogonal to $v$. Theorem \ref{prop.algoritmo.v2} applied to $f_v\circ \gamma$ gives a sweepout $\{p_t,q_t\}_{t\in [0,1]}$ of $\mathbb{S}^1$ such that $\langle v,\gamma(p_t)-\gamma(q_t)\rangle=0$ for every $t\in [0,1]$. It follows that $\{\gamma(p_t),\gamma(q_t)\}_{t\in [0,1]}$ is a sweepout of $\gamma$ and $|\gamma(p_t)-\gamma(q_t)|\leq w_u(\gamma)$ for every $t\in [0,1]$. The result now follows by the definition of $\mathcal{S}(\gamma)$ and $w(\gamma)$, using the denseness of $X$.
 \end{proof}

    Recall that the inradius of a planar domain is the supremum of the radii of the disks it contains.

    \begin{thm}\label{thm:inradius}
        Let $\Omega$ be a planar domain bounded by a simple closed curve $\gamma$. Then $2\cdot inrad(\Omega)\leq \mathcal{S}(\gamma)$.
    \end{thm}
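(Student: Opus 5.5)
The plan is a winding-number argument around the center of an inscribed disk; Lemma \ref{lem-nofixedpts-AMS} is not needed. Fix any closed disk $\overline{B}(c,r)$ contained in $\Omega$, so that $\gamma\cap B(c,r)=\emptyset$ and $|x-c|\geq r$ for every $x\in\gamma$. Fix also an arbitrary sweepout $\{p_t,q_t\}_{t\in[0,1]}$ of $\gamma$. It suffices to find $t_0$ with $|p_{t_0}-q_{t_0}|\geq 2r$. Taking the infimum over sweepouts and then the supremum over $r$ then gives $\mathcal{S}(\gamma)\geq 2\cdot inrad(\Omega)$.

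By Definition \ref{defiweepout}(iii), there is a continuous family of oriented closed arcs $C_t\subset\gamma$ from $p_t$ to $q_t$. Here $C_0$ is degenerate and $C_1$ covers $\gamma$ exactly once. For each $t$, let $\Gamma_t$ be the closed planar curve obtained by concatenating the parametrization of $C_t$ with the straight segment from $q_t$ back to $p_t$. Since the $C_t$ vary continuously in the uniform topology and $p_t,q_t$ vary continuously, $t\mapsto\Gamma_t$ is a continuous family of loops in the uniform topology. The segment at $t=1$ is degenerate because $p_1=q_1$.

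Suppose, for contradiction, that $|p_t-q_t|<2r$ for all $t$. The point $c$ never lies on $C_t\subset\gamma$. If $c$ lay on the segment $[p_t,q_t]$, then
\[
|p_t-q_t|=|p_t-c|+|c-q_t|\geq 2r,
\]
which is excluded. Hence $c\notin\Gamma_t$ for every $t$, and the winding number $\mathrm{ind}(\Gamma_t,c)$ is a well-defined integer. It depends continuously on $t$, because the loops vary uniformly and stay uniformly away from $c$ by compactness. So it is constant in $t$.

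The two endpoints give different values. At $t=0$ the loop $\Gamma_0$ is constant, so its winding number is $0$. At $t=1$ the loop $\Gamma_1$ is $\gamma$ traversed once, and $c\in\Omega$ lies in the bounded component of $\mathbb{R}^2\setminus\gamma$, so by the Jordan curve theorem its winding number is $\pm1$. This contradiction produces the desired $t_0$.

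The only delicate point is the continuity of $t\mapsto\Gamma_t$. This should follow directly from the uniform-convergence topology on oriented arcs fixed in Section \ref{section-Sweepouts by pairs of points}, after reparametrizing each concatenation on $[0,1]$, for instance giving the arc the time $[0,\tfrac12]$ and the segment the time $[\tfrac12,1]$.
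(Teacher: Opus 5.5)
Your proof is correct, and it follows a different route from the paper's.

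The paper fixes an open disc $D\subset\Omega$ and uses the radial projection $\pi:\mathbb{R}^2\setminus D\to\partial D$. Because $\pi|_{\gamma}$ has degree one, $\{\pi(p_t),\pi(q_t)\}$ is a sweepout of $\partial D$. Because $\pi$ is $1$-Lipschitz (by convexity of $D$), distances do not increase. Hence $\mathcal{S}(\gamma)\ge\mathcal{S}(\partial D)=2r$, where the lower bound $\mathcal{S}(\partial D)\ge 2r$ comes, for instance, from Lemma \ref{lem-nofixedpts-AMS} applied to the antipodal map.

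Your homotopy of the loops $\Gamma_t$ in $\mathbb{R}^2\setminus\{c\}$ replaces both ingredients at once. It removes the need for the fact that degree-one maps of circles carry sweepouts to sweepouts, and for the computation of the width of a round circle. Indeed, a pair with $c\in[p_t,q_t]$ is exactly a pair whose radial projections are antipodal. So you are in effect proving the antipodal case of Lemma \ref{lem-nofixedpts-AMS} directly, on $\gamma$ itself.

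What your version buys: it is elementary and self-contained, and it proves more. Every sweepout of $\gamma$ contains a pair whose connecting segment passes through the prescribed point $c\in\Omega$. So $\mathcal{S}(\gamma)$ is bounded below by the length of the shortest chord of $\gamma$ through $c$, and this can exceed $2\cdot inrad(\Omega)$. For an equilateral triangle with inradius $r$, the shortest chord through the incenter has length $4r/\sqrt{3}$, consistent with the paper's remark after Theorem \ref{thm-w>S}.

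What the paper's version buys: it is a monotonicity principle for $\mathcal{S}$ under distance-nonincreasing degree-one maps, and it does not use the linear structure of the plane. Your construction needs straight segments that depend continuously on their endpoints. That is not available in a general Riemannian manifold, where minimizing geodesics need not vary continuously with their endpoints.
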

    \begin{proof}
        Let $D\subset \Omega$ be an open disc. Denote by $\pi:\mathbb{R}^2\setminus D \rightarrow \partial D$ the radial projection. Given a sweepout $\{p_t,q_t\}_{t\in [0,1]}$ of $\gamma$, $\{\pi(p_t),\pi(q_t)\}_{[0,1]}$ is a sweepout of $\partial D$, because the restriction $\pi:\gamma\rightarrow \partial D$ is a degree one map. Moreover, $|\pi(p_t)-\pi(q_t)|\leq |p_t-q_t|$, by the convexity of $D$. It follows that the min-max width of $\partial D$, which is simply its diameter, is at most equal to the min-max width of $\gamma$. Since $D$ is arbitrary, the result follows.
    \end{proof}
    Combining Theorem \ref{thm.algoritmo} and Theorem \ref{thm:inradius}, we have proven Theorem \ref{thm-w>S}.

\section{Sweepouts by pairs of points induced by Birkhoff sweepouts}  \label{section:Sweepouts by pairs of points induced by Birkhoff sweepouts}

The first theorem of this section is the key ingredient of the proof of Theorem \ref{main-theorem}. The main new difficulty is that sweepouts of the sphere, in general, do not define foliations. And yet, Theorem \ref{thm.compare-Birkhoff} shows that it is still possible to construct a sweepout on an embedded circle in a Riemannian sphere, whose points are close to leaves of a given sweepout of the sphere, which are themselves close to each other.

We use $d_g$ and $d_H$ to denote the Riemannian distance and the Hausdorff distance in $(\mathbb{S}^2,g)$.

\begin{thm}\label{thm.compare-Birkhoff}
Let $\gamma$ be an embedded circle in a Riemannian sphere $(\mathbb{S}^2, g)$. Let $\{\Sigma_s\}_{s\in [-1,1]}$ be a sweepout of $\mathbb{S}^2$. Given any $\varepsilon>0$, there exists a sweepout $\{x_t, y_t\}_{t\in [0,1]}$ of $\gamma$ with the following property: there are continuous  functions $s_x,s_y:[0,1] \to [-1,1]$ such that, for every $t\in [0,1]$,
\begin{equation*}
    d_g(x_t, \Sigma_{s_x(t)}), d_g(y_t, \Sigma_{s_y(t)}),d_H(\Sigma_{s_x(t)},\Sigma_{s_y(t)})<\varepsilon.
\end{equation*}

\end{thm}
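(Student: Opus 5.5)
Write $\Sigma_s = F\circ \overline{c}_s$, with $F:\mathbb{S}^2_0\to\mathbb{S}^2$ continuous of degree one. Let $h:\mathbb{S}^2_0\to[-1,1]$ be the height function $z$, so that $\overline{c}_s=h^{-1}(s)$. Then $\Sigma_s=F(h^{-1}(s))$. The plan is to pull $\gamma$ back through $F$ to a closed curve in $\mathbb{S}^2_0$, apply Theorem \ref{prop.algoritmo.v2} to (a perturbation of) $h$ along that curve, and push the resulting pairs forward.

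\textbf{Step 1: uniform continuity.} Choose $\delta>0$ such that $|s-s'|<\delta$ implies $d_H(\Sigma_s,\Sigma_{s'})<\varepsilon$. This is possible because $s\mapsto \Sigma_s$ is continuous in the Hausdorff distance, being the image of horizontal circles under the uniformly continuous map $F$. Shrinking $\delta$ if needed, also arrange that $d_{\mathbb{S}^2_0}(u,u')<\delta$ implies $d_g(F(u),F(u'))<\varepsilon$.

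\textbf{Step 2: lift $\gamma$.} Since $F$ has degree one, we would like a closed curve $\tilde\gamma:\mathbb{S}^1\to\mathbb{S}^2_0$ with $F\circ\tilde\gamma$ uniformly close to $\gamma$ and degree one onto $\gamma$. However, $F$ need not admit lifts, so I would argue as follows. First homotope $F$ by an arbitrarily small uniform perturbation to a smooth map transverse to $\gamma$. Then $F^{-1}(\gamma)$ is a finite union of embedded circles in $\mathbb{S}^2_0$. Because $\deg F=1$, and $\gamma$ separates $\mathbb{S}^2$ into two discs $D_\pm$, the restriction $F: F^{-1}(\gamma)\to\gamma$ has total degree one, with orientations induced as boundaries of $F^{-1}(D_+)$. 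This is the expected main obstacle: a single component may not carry degree one; the components may carry degrees summing to one, with some of degree zero or negative. To handle this, I would connect the components by thin arcs inside $F^{-1}(\overline{D_+})$, or more generally through tubes. Alternatively, and more cleanly, work directly with the function $h$ restricted to the union of components and run the graph argument from the proof of Theorem \ref{prop.algoritmo.v2} on the disjoint union. In that argument, the component structure of $\Omega_f$ gives a path whose degree is counted with multiplicity. The degree-one condition should force some path in $\mathcal{P}$ to realise a non-trivial loop after projection to $\gamma$. I expect this to be the technical heart of the proof.

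\textbf{Step 3: apply Theorem \ref{prop.algoritmo.v2}.} Perturb $h\circ\tilde\gamma$ by less than $\delta/2$ to a Morse function $f$ with distinct critical values. Theorem \ref{prop.algoritmo.v2} gives a sweepout $\{p_t,q_t\}$ of $\mathbb{S}^1$ with $f(p_t)=f(q_t)$. Define
$$x_t=\gamma\circ\rho(p_t),\qquad y_t=\gamma\circ\rho(q_t),$$
where $\rho$ is the degree-one identification of the parametrizing circle with $\gamma$ coming from $F\circ\tilde\gamma$ (after a final reparametrization to make it monotone up to small error). Set $s_x(t)=h(\tilde\gamma(p_t))$ and $s_y(t)=h(\tilde\gamma(q_t))$. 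These are continuous, and $|s_x(t)-s_y(t)|<\delta$, so $d_H(\Sigma_{s_x},\Sigma_{s_y})<\varepsilon$. Moreover, $F(\tilde\gamma(p_t))\in\Sigma_{s_x(t)}$ lies within $\varepsilon$ of $x_t$.

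\textbf{Step 4: sweepout property.} A degree-one map $\mathbb{S}^1\to\gamma$ carries sweepouts to sweepouts via Proposition \ref{prop-sweepout-characterization}, exactly as used in the proof of Theorem \ref{thm:inradius}. This shows that $\{x_t,y_t\}$ is a sweepout of $\gamma$. The closeness of $F\circ\tilde\gamma$ to $\gamma$ is what allows replacing $F(\tilde\gamma(p_t))$ by an actual point of $\gamma$ at cost below $\varepsilon$.
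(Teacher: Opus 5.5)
\textbf{The gap is in Step 2, which is not carried out.} After perturbing $F$ to a smooth map $F'$ transverse to $\gamma$, you correctly obtain circles $C_1,\dots,C_k$ forming $F'^{-1}(\gamma)$, whose degrees $d_i$ onto $\gamma$ sum to $1$. But you leave open what to do when no $d_i$ equals $1$, and neither of your two remedies works.

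Joining the $C_i$ by arcs destroys the uniform closeness of $F'\circ\tilde\gamma$ to $\gamma$, which Steps 3--4 rely on. By compactness, for small $\eta>0$ the set $F'^{-1}(N_\eta(\gamma))$ lies in pairwise disjoint neighbourhoods of the $C_i$. So any closed curve meeting two components contains points sent to distance at least $\eta$ from $\gamma$, and $\eta$ has no relation to $\varepsilon$. When $p_t$ is such a point, no point of $\gamma$ lies near $F(\tilde\gamma(p_t))\in\Sigma_{s_x(t)}$, and the replacement in Step 4 fails.

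Your other suggestion, running the argument of Theorem \ref{prop.algoritmo.v2} on the disjoint union and counting with multiplicity, is not yet an argument. That theorem produces a sweepout of a single circle, and pairs with points on different $C_i$ belong to no $\mathcal{P}_{C_i}$. You also do not say which path in $\mathcal{P}_\gamma$ results, or why its image in $\mathcal{P}^*_\gamma$ is non-trivial.

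\textbf{The missing observation is that degree one is more than you need.} By Proposition \ref{prop-sweepout-characterization}, sweepouts are exactly the paths representing the non-trivial element of $\pi_1(\mathcal{P}^*_\gamma)\cong\mathbb{Z}/2$. A map $\phi:\mathbb{S}^1\to\gamma$ induces $\{p,q\}\mapsto\{\phi(p),\phi(q)\}$, which fixes $[*]$. By homotopy invariance and the model $z\mapsto z^d$, which sends $\{1,e^{2\pi i t}\}$ to $\{1,e^{2\pi i d t}\}$, it acts on $\mathbb{Z}/2$ as multiplication by $d=\deg\phi$. Hence odd-degree maps push sweepouts to sweepouts.

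Since $\sum_i d_i=1$, some $C_i$ has odd degree. Parametrize that $C_i$ by $\tilde\gamma$; then $F'\circ\tilde\gamma$ lies in $\gamma$ itself. Your Steps 3--4 then go through with $x_t=F'(\tilde\gamma(p_t))$, $y_t=F'(\tilde\gamma(q_t))$, $s_x(t)=h(\tilde\gamma(p_t))$ and $s_y(t)=h(\tilde\gamma(q_t))$, using that $|F'-F|_{C^0}$ is small. No reparametrization $\rho$ is needed.

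For comparison, the paper avoids smoothing $F$ and any transversality with respect to $\gamma$. It uses $\deg F=1$ to show that the compact set $\Omega=\{(\theta,s):\gamma(\theta)\in\Sigma_s\}$ separates the two ends of the cylinder $\mathbb{S}^1\times\mathbb{R}$; otherwise $(F,z)$ would miss a proper line in $\mathbb{S}^2\times\mathbb{R}$ and be null-homotopic. It then takes a homotopically non-trivial embedded circle within $\tau$ of $\Omega$ and applies Theorem \ref{prop.algoritmo.v2} to a Morse approximation of its $s$-coordinate. Finally it pushes forward by the $\theta$-coordinate, which has degree one by embeddedness.
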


    \begin{proof} 
    Throughout this proof, we use $\mathbb{S}^2_0 = \{(x,y,z) \in \mathbb{R}^3 : x^2+y^2+z^2=1\}$.  Let $F: \mathbb{S}^2_0 \rightarrow \mathbb{S}^2$ be a continuous degree one map such that $\Sigma_s = F\circ \overline{c}_s$, for every $s \in [-1,1]$. Choose $0< \tau < 1/2$ so that
\begin{equation}\label{eq-thm.compare-Birkhoff-1}
\text{if } \theta_1, \theta_2\in \mathbb{S}^1 \text{ and } d_{\mathbb{S}^1}(\theta_1, \theta_2)< \tau \text{, then } d_{g} (\gamma(\theta_1),\gamma(\theta_2)) < \varepsilon/2; \text{ and}
\end{equation}
\begin{equation}\label{eq-thm.compare-Birkhoff-2}
\text{if } s_1, s_2 \in [-1,1] \text{ and } |s_1 - s_2| < \tau \text{, then } d_H (\Sigma_{s_1}, \Sigma_{s_2}) < \varepsilon/2.
\end{equation} 

Letting $z(p)$ represent the $z$-coordinate of $p \in \mathbb{S}^2_0$, consider the auxiliary maps
$$
\hat{F} : \mathbb{S}^2_0 \rightarrow \mathbb{S}^2 \times \mathbb{R}, \quad \hat{F}(p) := (F(p), z(p)),
$$
and
$$
\hat{\gamma} : \mathbb{S}^1 \times \mathbb{R} \rightarrow \mathbb{S}^2 \times \mathbb{R}, \quad \hat{\gamma}(\theta, s):= (\gamma(\theta), s).
$$
Let $\text{Im}(\hat{F})$ be the image of the map $\hat{F}$, and consider the compact set $\Omega$ of the points $(\theta, s)$ with $\gamma(\theta) \in \Sigma_s$. Equivalently, 
\begin{equation*}
\Omega = \{(\theta, s) \in \mathbb{S}^1\times [-1,1] : \hat{\gamma}(\theta,s) \in \text{Im}(\hat{F})\}.
\end{equation*}
Let $d$ denote the standard distance function on $\mathbb{S}^1\times \mathbb{R}$. \\

\noindent \textbf{Claim:} 
There exists a homotopically non-trivial smoothly embedded curve $\sigma : \mathbb{S}^1 \rightarrow \mathbb{S}^1\times [-1,1]$ satisfying $$d(\sigma(u), \Omega)<\tau \quad \text{for all} \quad u \in \mathbb{S}^1.$$ 

\begin{proof}[Proof of the Claim:]
Let $p_{\pm} = (\theta_{\ast}, \pm 2)$, for a fixed $\theta_{\ast} \in \mathbb{S}^1$. Let $U$ be the connected component of $\left(\mathbb{S}^1\times \mathbb{R}\right)\setminus \Omega$ which contains $p_+$. Let us prove that $p_- \notin U$. Suppose, by contradiction, that there exists an embedded path $\alpha : [-2,2] \rightarrow U$ such that $\alpha(-2)=p_-$ and $\alpha(2)=p_+$. Let $\overline{\alpha} : \mathbb{R} \rightarrow U$ be defined by $\overline{\alpha}(t) = \alpha(t)$, for $t\in [-2,2]$, and $\overline{\alpha}(t) = (\theta_{\ast}, t)$, for $|t|>2$. We can assume, without loss of generality, that $\alpha([-2,2]) \subset \mathbb{S}^1\times [-2,2]$, which implies that $\overline{\alpha}$ is embedded. In particular, $\hat{\gamma} \circ \overline{\alpha}$ is also an embedded in $\mathbb{S}^2\times \mathbb{R}$. Clearly, $\left(\mathbb{S}^2\times \mathbb{R}\right)\setminus \text{Im}(\hat{\gamma} \circ \overline{\alpha})$ is contractible.  On the other hand, $\alpha \subset U$ implies that $\text{Im}(\hat{F})$ does not intersect $\text{Im}(\hat{\gamma} \circ \overline{\alpha})$. We conclude that $\hat{F}$ is homotopic to a constant map. But this contradicts the fact that $F$ has degree one. Thus, $p_- \notin U$.

Consider the signed distance function to $\partial U$ (which is contained in $\mathbb{S}^1\times [-1,1]$) on the cylinder $\mathbb{S}^1\times \mathbb{R}$, constructed as follows. Let $\eta : \mathbb{S}^1 \times \mathbb{R} \rightarrow \mathbb{R}$ be defined by $\eta(x) = d(x, \partial U)$, whenever $x\in \overline{U}$, and $\eta(x) = - d(x, \partial U)$, otherwise. Given $0< \delta< \tau/2$, there exists a smooth function $\tilde{\eta} : \mathbb{S}^1 \times \mathbb{R} \rightarrow \mathbb{R}$ such that $|\eta - \tilde{\eta}|_{C^0} < \delta$, for which $0$ is a regular value. Notice that $\eta(p_-) \leq -1$ and $1\leq \eta (p_+)$, so that $\tilde{\eta}(p_-) < -1/2$ and $1/2 < \tilde{\eta} (p_+)$. Moreover, $\tilde{\eta}^{-1}(0)$ is compact, and therefore a union of embedded circles in $\mathbb{S}^1\times \mathbb{R}$. The inequalities $\tilde{\eta}(p_-) < 0 < \tilde{\eta}(p_+)$ imply that $\tilde{\eta}^{-1}(0)$ contains a homotopically non-trivial circle $\sigma : \mathbb{S}^1 \rightarrow \mathbb{S}^1\times \mathbb{R}$. Otherwise, the Jordan Theorem would imply that each circle of $\tilde{\eta}^{-1}(0)$ would separate $\mathbb{S}^1\times \mathbb{R}$ into two components, one bounded and one unbounded. Therefore, $\tilde{\eta}$ would have the same sign on both ends. This contradicts the preceding inequalities.

Therefore, for every $u \in \mathbb{S}^1$, we have
$$
d(\sigma(u), \Omega)\leq d(\sigma(u), \partial U)=|\eta(\sigma(u))| = |\eta(\sigma(u)) - \tilde{\eta}(\sigma(u))| < \delta.
$$

If $\partial U \subset \mathbb{S}^1\times (-1,1)$, then, for sufficiently small $\delta$, the embedded circle $\sigma$ is also contained in $\mathbb{S}^1\times (-1,1)$, and the claim is proven. Otherwise, $\sigma$ is contained in $\mathbb{S}^1\times (-1-\delta,1+\delta)$. Let us write $\sigma(u) = (\theta(u), s(u))$, where $\theta(u) \in \mathbb{S}^1$ and $s(u) \in \mathbb{R}$. Consider the embedded curve $\tilde{\sigma}: \mathbb{S}^1\rightarrow \mathbb{S}^1\times [-1,1]$ defined by $\tilde{\sigma}(u) = (\theta(u), (1+\delta)^{-1}s(u))$. It clearly satisfies $d(\tilde{\sigma}(u), \sigma(u))=|s(u)|\delta/(1+\delta)$. In particular, for sufficiently small $\delta$, we have $d(\tilde{\sigma}(u), \sigma(u))<\tau/2$, for every $u\in \mathbb{S}^1$. Therefore, $d(\tilde{\sigma}(u), \Omega)<\tau$ for all $u\in \mathbb{S}^1$, and $\tilde{\sigma}$ satisfies the desired properties.
\end{proof}

  We write $\sigma(u) = (\theta(u), s(u))$, where $\theta(u) \in \mathbb{S}^1$ and $s(u) \in \mathbb{R}$ and let $f: \mathbb{S}^1\rightarrow \mathbb{R}$ be a Morse function such that each critical level of it contains a unique critical point, and such that $|f(u) - s(u)| < \tau/2$, for all $u$. Theorem \ref{prop.algoritmo.v2} provides us with a sweepout $\{p_t, q_t\}_{t \in [0,1]}$ of $\mathbb{S}^1$ such that $f(p_t) = f(q_t)$, for all $t \in [0,1]$. In particular, $|s(p_t) - s(q_t)| < \tau$ and, by property (\ref{eq-thm.compare-Birkhoff-2}), 
\begin{equation*}
d_H(\Sigma_{s(p_t)}, \Sigma_{s(q_t)}) < \varepsilon/2.
\end{equation*}
By the Claim, $d(\sigma(u), \Omega)<\tau$ for every $u\in \mathbb{S}^1$. In particular, there exist $(\theta_t^p, s_t^p) \in \Omega$, $(\theta_t^q, s_t^q) \in \Omega$ (and hence $\gamma(\theta^p_t) \in \Sigma_{s_t^p}$ and $\gamma(\theta^q_t) \in \Sigma_{s_t^q}$), such that, for all $t \in [0,1]$,
\begin{equation*}
d((\theta(p_t), s(p_t)),(\theta_t^p, s_t^p)) < \tau \text{ and } d( (\theta(q_t), s(q_t)),(\theta_t^q, s_t^q)) < \tau.
\end{equation*}
As a consequence of the properties of the distance in the cylinder, we have
$$
d_{\mathbb{S}^1}(\theta_t^p, \theta(p_t)) < \tau \text{ and } |s_t^p - s(p_t)| < \tau.
$$
Therefore, in addition to $\gamma(\theta^p_t) \in \Sigma_{s_t^p}$ and $\gamma(\theta^q_t) \in \Sigma_{s_t^q}$, equations (\ref{eq-thm.compare-Birkhoff-1}) and (\ref{eq-thm.compare-Birkhoff-2}) yield
\begin{equation*}
d_{g}(\gamma(\theta_t^p), \gamma(\theta(p_t)))<\varepsilon/2 \text{ and }  d_H (\Sigma_{s_t^p}, \Sigma_{s(p_t)}) < \varepsilon/2.
\end{equation*}
Similarly, 
\begin{equation*}
d_{g}(\gamma(\theta_t^q), \gamma(\theta(q_t)))<\varepsilon/2 \text{ and }  d_H (\Sigma_{s_t^q}, \Sigma_{s(q_t)}) < \varepsilon/2.
\end{equation*}

We now claim that $\{\theta(p_t), \theta(q_t)\}_{t \in [0,1]}$ is a sweepout of $\mathbb{S}^1$. The embeddedness of $\sigma$ and the fact that it is homotopically non-trivial implies, via the Jordan Theorem, that $\theta: \mathbb{S}^1 \rightarrow \mathbb{S}^1$ is a degree one map. The claim follows from this fact and Proposition \ref{prop-sweepout-characterization}. The maps that appear in the statement are precisely $x_t=\gamma(\theta(p_t))$, $y_t=\gamma(\theta(q_t))$, $s_x(t) = s(p_t)$, and $s_y(t)= s(q_t)$.
\end{proof}

\begin{proof}[Proof of Theorem \ref{main-theorem}]
   Given $\varepsilon>0$, let $\{\Sigma_t\}_{t\in [-1,1]}$ be a sweepout of $(\mathbb{S}^2,g)$ such that $\sup_{t\in [-1,1]}L_g(\Sigma_t)\leq w_B(\mathbb{S}^2,g)+\varepsilon$. By Theorem \ref{thm.compare-Birkhoff}, there exists a sweepout $\{x_t,y_t\}_{t\in [0,1]}$ of $\gamma$ and continuous functions $s_x,s_y:[0,1]\rightarrow [-1,1]$ such that 
    \begin{equation*}
        d_g(x_t,\Sigma_{s_x(t)}),d_g(y_t,\Sigma_{s_y(t)}),d_H(\Sigma_{s_x(t)},\Sigma_{s_y(t)})<\frac{\varepsilon}{6}
    \end{equation*}
    for every $t\in [0,1]$. Picking $z_t\in \Sigma_{s_x(t)}$ closest to $x_t$, $w_t\in \Sigma_{s_y(t)}$ closest to $z_t$, and $u_t\in \Sigma_{s_{y}(t)}$ closest to $y_t$, we have, by the triangle inequality,
    \begin{equation*}
        d_g(x_t,y_t)< \frac{1}{6}\varepsilon + d_H(\Sigma_{s_x(t)},\Sigma_{s_y(t)}) + \frac{1}{2}L_g(\Sigma_{s_y(t)}) + \frac{1}{6}\varepsilon < \varepsilon + \frac{1}{2}w_{B}(\mathbb{S}^2,g)
    \end{equation*}
    for every $t\in [0,1]$. By definition of the min-max width of $\gamma$, 
    \begin{equation*}
        \mathcal{S}_g(\gamma)<\frac{1}{2}w_B(\mathbb{S}^2,g)+\varepsilon.
    \end{equation*}
    Since $\varepsilon>0$ is arbitrary, the proof is finished.
\end{proof}

\begin{exam} \label{example-antipodal-map}
     Let $(\mathbb{S}^2, g)$ be a positively curved sphere for which the antipodal map is an isometry. This metric descends to a metric on the projective plane $\mathbb{RP}^2$. Let $\alpha$ be a simple closed geodesic on this projective plane that has the least length among homotopically non-trivial loops. Then, $\alpha$ lifts to a simple closed geodesic $\gamma$ on $(\mathbb{S}^2, g)$ for which $\mathcal{S}_g(\gamma) = L_g(\gamma)/2$, see Example 9 of \cite{AmbMonSan}. The positivity of the curvature allows the construction of a sweepout of $(\mathbb{S}^2,g)$ by curves, which is admissible for the Birkhoff invariant, such that all curves have length at most $L_g(\gamma)$. (See \cite{Cal-Cao}). Using the estimate on $w_B(\mathbb{S}^2,g)$ obtained from this observation together with Theorem \ref{main-theorem}, we conclude that $\mathcal{S}_g(\gamma) = w_B(\mathbb{S}^2,g)/2$. 
\end{exam}

    \section{Non-compact manifolds} \label{section::non-compact-case}

The proof of Theorem \ref{theorem-non-compact} is based on two theorems of independent interest that we prove in Section \ref{subsec-statements}. In Section \ref{section-aux-lemm} we establish auxiliary lemmas that clarify the connection between these two results with the quantitative topological dimension theory of Urysohn, in the form of estimates for the $1$-width in that theory. In Section \ref{subsec-proof-thm-B} we give a streamlined proof of Theorem \ref{theorem-non-compact}, based on the results of the previous two sections. We conclude with another application, concerning systoles, in Section \ref{subsec-further-res}.

\subsection{Key results}\label{subsec-statements} 

We first relate a uniform upper bound for the widths of all embedded circles to an upper bound on the distance in $M$ to a given ray, or to a given line, on $(M,g)$. Recall that a line is (the trace of) a geodesic $\sigma : \mathbb{R}\rightarrow M$, parametrized by arc-length, such that the restriction of $\sigma$ to each finite interval $[t_0,t_1]$ is the only minimizing geodesic joining $\sigma(t_0)$ and $\sigma(t_1)$. Rays are defined similarly, replacing $\mathbb{R}$ by $[0,\infty)$. 

In what follows, we also use the notation $N_r(\Omega) := \{p \in M : d(p,\Omega) \le r\}$.

\begin{thm}\label{thm-widts-->dist-lines}
Let $(M,g)$ be a complete non-compact Riemannian manifold such that
    \begin{equation*}
        S:= \sup \{ \mathcal{S}_g(\gamma) \,|\, \gamma \text{ is an embedded circle in } M\}<\infty.
    \end{equation*}
    Then, $M$ has at most two ends. Moreover:
\begin{itemize}
            \item[(i)] If $M$ has two ends, then for every line $\sigma \subset M$, we have $N_S(\sigma) =M$. 
            \item[(ii)] If $M$ has one end,  for each ray $\sigma$, there exists a compact set $K = K(\sigma)$ such that $M \backslash K(\sigma) \subset N_S(\sigma)$.
\end{itemize}
\end{thm}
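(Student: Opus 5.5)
The tool throughout is Proposition \ref{prop-widthestimate-triangle}. Whenever $M$ contains a piecewise smooth embedded ``triangle'' whose vertices are at distance more than $L$ from the opposite sides, $M$ contains an embedded circle of width greater than $L$. So each assertion is proved by contradiction: assuming its failure, we build such a triangle with $L\ge S$ (or even $L$ arbitrarily large), which contradicts the definition of $S$. If $\dim M=1$, then $M=\mathbb{R}$ and everything is trivial. So assume $\dim M\ge 2$; then short arcs can be perturbed to become embedded and pairwise disjoint away from the vertices, at the cost of an arbitrarily small error in all the distances involved.

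\textbf{Three ends.} Suppose $M$ has at least three ends, and take a compact set $K$ whose complement has at least three unbounded components $E_1,E_2,E_3$. Each is connected, so we may fix a connected compact set $K'\supset K$ (say a large geodesic ball, enlarged if needed) that meets all three. Choose points $x_i\in E_i$ with $d(x_i,K')>R$, for $R$ large. Join $x_1$ to $x_2$ by a path that runs inside $E_1$ to $K'$, then through $K'$, then inside $E_2$; do the same for the other two pairs. Next, modify these paths so that they are embedded and meet only at the vertices. The pieces inside distinct $E_i$ are already disjoint, and the pieces inside $K'$ can be made disjoint after a generic perturbation when $\dim M\ge 3$. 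In dimension $2$, route the three paths through a common ``tripod'' in $K'$ instead.

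The key estimate is that the side $\gamma_{x_2x_3}$ lies in $E_2\cup E_3\cup K'$. A path from $x_1$ to any point of $E_2\cup E_3\cup K'$ must leave $E_1$ and therefore cross $K$, so $d(x_1,\gamma_{x_2x_3})\ge d(x_1,K')>R$. Proposition \ref{prop-widthestimate-triangle} with $L=R$ then gives $S\ge R$ for every $R$, which is a contradiction. Hence $M$ has at most two ends.

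\textbf{Assertion (i).} Let $\sigma$ be a line and suppose some point $p$ has $d(p,\sigma)>S$. Pick $T$ large and set $x=\sigma(-T)$, $y=\sigma(T)$, $z=p$. Take as sides the segment $\sigma([-T,T])$ and minimizing geodesics from $p$ to $x$ and from $p$ to $y$, perturbed to be embedded and disjoint except at the vertices. We need $d(p,\sigma([-T,T]))>S$, which holds by hypothesis, and we need $d(x,\gamma_{yp})>S$ and $d(y,\gamma_{xp})>S$.

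This last pair of estimates is the main obstacle. Since $M$ has two ends, the two ends of $\sigma$ go to different ends; otherwise $\sigma$ would be a ray pair going into one end, and one can check this contradicts the two-end structure. The idea I would try first is to route each side $\gamma_{py}$ so that it stays far from $x$. Separate by a compact set $K\ni p$: the point $x$ lies in one end $E_-$ and $y$ lies in the other end $E_+$. Let the path from $p$ to $y$ go through $K$ and then out along $\sigma|_{[T_0,T]}$, where $T_0$ is chosen so that $\sigma(T_0)$ lies beyond $K$. Because $\sigma$ is minimizing, $d(x,\sigma(t))=T+t$ for $t\ge T_0$. Also $d(x,K)\ge T-C$ for a constant $C$. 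As $T\to\infty$, these distances exceed $S$.

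The remaining issue is the overlap with $\sigma$: the side $\gamma_{xy}$ must be $\sigma$ itself, so the other two sides should use only short connectors to $\sigma$. Concretely, take $x=\sigma(-T)$ and $y=\sigma(T)$ with the sides $p\to\sigma(\pm T_0)$ through $K$, and let the vertices be $p$, $\sigma(-T_0')$, $\sigma(T_0')$. I would tune $T_0'$ and the connectors so that all three vertex-to-opposite-side distances exceed $S$, using the fact that a point of $\sigma$ far along the line is far from $K$ and from the other half of $\sigma$.

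\textbf{Assertion (ii).} This is the same construction with a ray $\sigma$ and one end. Points $p$ far out in $M$ with $d(p,\sigma)>S$ serve as the third vertex, while $x$ and $y$ are chosen far out along $\sigma$. Compactness of $K(\sigma)$ is exactly what is needed to keep $p$ far from a bounded initial portion of the triangle.
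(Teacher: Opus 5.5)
Your three-ends argument works. Routing piecewise paths through a connected compact $K'$ gives the separation bound $d(x_1,\gamma_{x_2x_3})\ge d(x_1,K')$ more directly than the paper's argument with minimizing geodesics. Note that the two legs at the same vertex $x_i$ both lie in $E_i$, so they also need the perturbation.

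The genuine gap is in (i). You set up exactly the paper's triangle: vertices $p$, $\sigma(\pm T)$, and sides $\sigma([-T,T])$ plus minimizing geodesics from $p$. But you then treat $d(x,\gamma_{yp})>S$ and $d(y,\gamma_{xp})>S$ as the main obstacle and replace them by a routing scheme that is never carried out. Letting the sides run along $\sigma$ destroys embeddedness, and ``tuning $T_0'$ and the connectors'' is not an argument. The missing step is a one-line triangle inequality that uses only that $\sigma$ is minimizing. If $q$ lies on a minimizing geodesic from $p$ to $\sigma(T)$, then $d(q,\sigma(T))\le d(p,\sigma(T))\le d(p,\sigma(0))+T$. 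Since $d(\sigma(-T),\sigma(T))=2T$,
\[
d(\sigma(-T),q)\ \ge\ 2T-d(q,\sigma(T))\ \ge\ T-d(p,\sigma(0)),
\]
which exceeds $S$ for large $T$; the case of $\sigma(T)$ is symmetric. The ends of $M$ play no role here. Your claim that the two halves of $\sigma$ go into different ends is unproved at that stage; in the paper it is a consequence of (i), not an input.

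Part (ii) is likewise only a sketch, and ``the same construction'' fails for a naive choice of vertices. A far-out point $p$ with $d(p,\sigma)>S$ may have $d(p,\sigma)$ very large and foot point near $\sigma(0)$. Then vertices far along $\sigma$ give a degenerate triangle: in $\mathbb{R}^2$, with $\sigma$ the positive $x$-axis and $p=(-R,0)$, the segment from $p$ to $\sigma(b)$ passes through $\sigma(a)$ for $0<a<b$. Three ingredients are missing.

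First, take $K$ to be $\overline{B}_L(\sigma(0))$ together with the bounded components of its complement, and prove it is compact. Then $M\setminus K$ is the unique unbounded component, connected and containing a tail of $\sigma$; this is exactly where one-endedness enters.

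Second, a path in $M\setminus K$ from a bad point to $\sigma$, together with the intermediate value theorem, produces $q\notin K$ with $S<d(q,\sigma)\le 3S/2$. This forces the foot point $\sigma(t_q)$ to satisfy $t_q\ge L-3S/2$.

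Third, the triangle with vertices $q$ and $\sigma\bigl(t_q\pm(L-3S/2)\bigr)$ then has all vertex-to-opposite-side distances $>S$ once $L>4S$, by the same triangle inequality as in (i).
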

\begin{proof}
    Suppose, by contradiction, that $M$ has at least three ends. Let $B_k$ be a metric ball of radius $k$ in $(M,g)$. If $k$ is large enough, then $M\setminus B_k$ has at least three non-compact connected components. Let $x_k, y_k, z_k \in M$ be points in different non-compact connected components of $M\setminus B_k$. Consider a simple closed curve $\gamma_k$ made of three minimizing geodesic segments joining $x_k$ and $y_k$, $y_k$ and $z_k$, and $z_k$ and $x_k$. We prove that $\lim_{k\rightarrow \infty} S(\tilde{\gamma}_k) = \infty$, for a sequence of embedded circles $\tilde{\gamma}_k$ that are close to $\gamma_k$. 
    
    Indeed, given $k_0$ large, there exist $k_1\geq k_0$ such that, for every $k \geq k_1$, a minimizing geodesic joining $x_k$ and $y_k$ as above does not intersect the connected component of $M\setminus B_{k_1}$ that contains $z_k$. Otherwise, a connected subset of this minimizing segment contained in $M\setminus B_{k_0}$ would have length at least $2(k_1-k_0)$ and endpoints in the closure of $B_{k_0}$. Since these endpoints in the closure of $B_{k_0}$ are at distance at most $2k_0$ between themselves, this is not possible for large $k_1$. Similar observations hold for the other two minimizing segments considered in the construction. 
    
    In particular, following the notation in the statement of Proposition \ref{prop-widthestimate-triangle}, $x_k$, $y_k$, and $z_k$ determine arcs $\gamma_{xy}$, $\gamma_{yz}$, and $\gamma_{zx}$ of $\gamma$ for which 
    \begin{equation*}
        d(x_k, \gamma_{yz}), d(y_k, \gamma_{zx}), d(z_k, \gamma_{xy}) \geq k-k_1.
    \end{equation*}
    Proposition \ref{prop-widthestimate-triangle} yields an embedded circle $\tilde{\gamma}_k$, close to $\gamma_k$, satisfying $S(\tilde{\gamma}_k)> k-k_1-1$. This contradicts the main assumption in the statement, proving that $M$ has at most two ends.

    We consider next the case where $M$ contains a line. Suppose, by contradiction, that there exists a line $\sigma$ in $M$ and $p\in M$, such that $d(p, \sigma) > S$. Up to a reparameterization, we may assume that $d(p, \sigma(0)) = d(p, \sigma)$. Let $T_k$ be a geodesic triangle, with vertices at $p$, $\sigma(-k)$ and $\sigma(k)$, whose sides are minimizing geodesic segments, for each positive integer $k$. 
    
    By the contradiction assumption, the distance between $p$ and the opposite side is bigger than $S$. Let $\alpha_+$ and $\alpha_-$ denote the sides of $T_k$ opposite to $\sigma(k)$ and $\sigma(-k)$, respectively. Notice that, for every $q\in \alpha_-$, 
    \begin{align*}
        d(\sigma(-k), q) \geq  2k - d(q, \sigma(k)) \geq  2k - d(p, \sigma(k)) \geq k - d(\sigma(0),p).
    \end{align*}
     Letting $k$ be large enough, one obtains $d(\sigma(-k), \alpha_-) > S$ and, similarly, $d(\sigma(k), \alpha_+) > S$. Thus, Proposition \ref{prop-widthestimate-triangle} applied to such triangle $T_k$ provides us again with an embedded circle with min-max width bigger than $S$, contrary to the assumptions. This proves item $(i)$.

    In the remainder of the proof, $M$ has a single end, and $\sigma \subset M$ is a ray. Take $L\geq S$ large (to be chosen), and let $K$ be the union of the closure of the geodesic ball $B = B_{L}(\sigma(0))$ and every bounded connected components of $M\setminus \overline{B}$. Notice that the complement of $K$ in $M$ is the only unbounded connect component of $M\setminus \overline{B}$. The compactness of $K$ follows from the fact that a divergent sequence $\{p_n\} \subset K$ would necessarily have a subsequence, not relabeled, not containing two points in the same bounded connected component of $M\setminus \overline{B}$. Using paths from $\sigma(0)$ to $p_n$, we could find $q_n$, in the same bounded connected component of $M\setminus \overline{B}$ that the point $p_n$, and such that $d(q_n, \sigma(0))=2L$. Up to subsequence, $q_n\rightarrow q \in M$. On the other hand, the properties of $\{q_n\}$ imply that $d(q_n, \overline{B}) \geq L$, and $q_n$ and $q_m$ belong to different connected components of $M\setminus \overline{B}$, for $m\neq n$. Then, $d(q_n, q_m) \geq 2L$, contradicting the convergence of this subsequence.
    
    Let us prove that $K$ has the properties stated in $(ii)$, for some sufficiently large $L$. Since $K$ is compact and $\sigma$ is a ray, there exists $k\in \mathbb{N}$ such that $\sigma([k, \infty])$ is contained in the unbounded component of $M\setminus \overline{B}$. By construction of $K$, $\sigma([k,\infty])$ is contained in $M\setminus K$. Suppose, by contradiction, that there exists $p\in M\setminus K$ such that $d(p, \sigma) > S$. Using a path contained in $M\setminus K$ joining $p$ and $\sigma$, we find $q \in M\setminus K$ such that $S< d(q, \sigma) \leq 3S/2$. Notice that, for $t < L - 3S/2$, we have
    \begin{align*}
        d(q, \sigma(t)) \geq d(q, \sigma(0)) - d(\sigma(0),\sigma(t)) > L - t > 3S/2 \geq d(q, \sigma).
    \end{align*}
    In particular, there exists $t_q \geq L-3S/2$ such that $d(\sigma(t_q), q) = d(\sigma, q)$. 
    
    Consider the geodesic triangle $T(L,q)$ whose sides are minimizing geodesic segments joining the points $q$, $\sigma_- = \sigma(t_q-(L-3S/2))$, and $\sigma_+ = \sigma(t_q+(L-3S/2))$. The portion of $\sigma$ between $\sigma_-$ and $\sigma_+$ is the side of the triangle opposite to the point $q$.

    By Proposition \ref{prop-widthestimate-triangle}, it remains to show that, for an appropriated choice of $L>3S/2$, the distance between each vertex of $T(L,q)$ to the opposite side of the triangle is greater than $S$. For the vertex $q$, we know that the distance to the opposite side equals $d(q, \sigma(t_q)) = d(q, \sigma) >S$. For the vertex $\sigma_-$, if $q^{\prime}$ is any point in the opposite side, then
    \begin{align*}
        d(\sigma_-,q^{\prime}) & \geq d(\sigma_-, \sigma_+) - d(q^{\prime}, \sigma_+)\\
        \nonumber &\geq 2L-3S - d(q, \sigma_+)\\
        \nonumber & > 2L-3S - d(q, \sigma(t_q)) - d(\sigma(t_q), \sigma_+)\\
        \nonumber & \geq 2L - 3S - 3S/2 - (L-3S/2) = L-3S.  
    \end{align*}
    The right hand side of the above inequality is bigger than $S$, if $L$ is large. The case of $\sigma_+$ is completely analogous. The proof is complete.
\end{proof}

    We remark that, in the situation described in item $(ii)$ of Theorem \ref{thm-widts-->dist-lines}, $M$ has no lines. In fact, given a line $\sigma:\mathbb{R} \to M$, consider the two rays $\sigma_1,\sigma_2:[0,+\infty) \to M$, defined by $\sigma_1(t)=\sigma(t)$ and $\sigma_2(t) = \sigma(-t)$. For large $t>0$, $\sigma_2$ enters $N_S(\sigma_1)$, so that $t\le d(\sigma_2(t),\sigma_1) \le S $, leading to a contradiction. 
    In the situation described in item $(i)$, on the other hand, the two ends of any line are clearly forced to diverge towards the two distinct ends of $M$.  

    The second result is a consequence of Theorem \ref{prop.algoritmo.v2}. It gives an  estimate for the min-max widths of embedded circles in terms of the diameter of the level sets of continuous real-valued functions. This result is relevant even in the compact setting.
     \begin{thm}\label{thm-diam-->width}
        Let $(M,g)$ be a complete Riemannian manifold. Suppose that there exist a continuous map $f:M \to \mathbb{R}$ and a real number $C>0$ such that $diam (f^{-1}(t)) \le C$ for every $t \in \mathbb{R}$. Then, 
        \begin{equation*}
            \mathcal{S}_g(\gamma) \le C
        \end{equation*}
        for every embedded circle $\gamma$ in $M$.
    \end{thm}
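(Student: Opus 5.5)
The plan is to pull $f$ back to the circle, approximate it by a Morse function with distinct critical values, and apply Theorem \ref{prop.algoritmo.v2}. The resulting sweepout has pairs lying in nearly the same level set of $f$. Let $\gamma:\mathbb{S}^1\to M$ parametrize the embedded circle and set $h=f\circ\gamma:\mathbb{S}^1\to\mathbb{R}$, which is continuous. Fix $\varepsilon>0$.

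\textbf{Step 1: approximate $h$.} Since $\gamma(\mathbb{S}^1)$ is compact, the map $t\mapsto f^{-1}(t)\cap\gamma(\mathbb{S}^1)$ behaves semicontinuously. Concretely, I want $\delta>0$ with the property that whenever $\theta_1,\theta_2\in\mathbb{S}^1$ satisfy $|h(\theta_1)-h(\theta_2)|<\delta$, we have $d_g(\gamma(\theta_1),\gamma(\theta_2))<C+\varepsilon$. I prove this by contradiction. Otherwise there are sequences $\theta_1^n,\theta_2^n$ with $|h(\theta_1^n)-h(\theta_2^n)|\to0$ and $d_g(\gamma(\theta_1^n),\gamma(\theta_2^n))\ge C+\varepsilon$. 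Passing to subsequences converging to $\theta_1,\theta_2$, continuity gives $h(\theta_1)=h(\theta_2)=:t_0$. Then both $\gamma(\theta_1)$ and $\gamma(\theta_2)$ lie in $f^{-1}(t_0)$, while $d_g(\gamma(\theta_1),\gamma(\theta_2))\ge C+\varepsilon>diam(f^{-1}(t_0))$, a contradiction. This compactness step is the only place where a little care is needed, and I expect it to be straightforward.

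\textbf{Step 2: apply Theorem \ref{prop.algoritmo.v2}.} Choose a Morse function $\tilde h:\mathbb{S}^1\to\mathbb{R}$ with $|\tilde h-h|_{C^0}<\delta/2$ such that each critical level contains a unique critical point. Such functions are dense in $C^0$: smooth $C^0$-approximations exist, Morse functions are generic, and a small perturbation separates the critical values. Theorem \ref{prop.algoritmo.v2} then gives a sweepout $\{p_t,q_t\}_{t\in[0,1]}$ of $\mathbb{S}^1$ with $\tilde h(p_t)=\tilde h(q_t)$, so $|h(p_t)-h(q_t)|<\delta$. By Step 1, $d_g(\gamma(p_t),\gamma(q_t))<C+\varepsilon$ for all $t$.

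\textbf{Step 3: conclude.} Since $\gamma$ is a homeomorphism onto its image, $\{\gamma(p_t),\gamma(q_t)\}_{t\in[0,1]}$ is a sweepout of $\gamma$, exactly as in the proof of Theorem \ref{thm.algoritmo}. Therefore $\mathcal{S}_g(\gamma)\le C+\varepsilon$, and letting $\varepsilon\to0$ gives $\mathcal{S}_g(\gamma)\le C$.
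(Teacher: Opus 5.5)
Your proposal is correct and follows the paper's proof: you approximate $f\circ\gamma$ by a Morse function with distinct critical values, apply Theorem \ref{prop.algoritmo.v2}, and use a compactness argument to turn closeness of $f$-values into distance less than $C+\varepsilon$. The one difference is that you run the compactness argument on $\mathbb{S}^1$, for pairs of points of $\gamma$, whereas the paper runs it on the set $f^{-1}([c,d])\subset M$; your version is slightly leaner because it does not need to check that $f^{-1}([c,d])$ is compact, which is the only place the paper uses completeness.
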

    \begin{proof}
    Let $\gamma$ be an embedded circle in $M$. Let $c:= \min \{f(p) : p\in \gamma\}$ and $d:= \max\{f(p) : p\in\gamma \}$. By virtue of the assumption on the diameter of the level sets of $f$, it is straightforward to check that $f^{-1}([c,d])$ is compact. 
   
    We claim that, for every $\varepsilon>0$, there exists $\delta>0$ such that, if $s_1, s_2 \in [c, d]$ and $|s_1-s_2|<\delta$, then 
    \begin{equation*}
    diam(f^{-1}(s_1)\cup f^{-1}(s_2)) < C+ \varepsilon.
    \end{equation*}
    Suppose, by contradiction, that there exists $\varepsilon>0$ and sequences $\{s_{\ell}\}$ and $\{t_{\ell}\}$ in $[c,d]$ converging to the same limit $\overline{t}$, and such that the diameter of $f^{-1}(s_{\ell})\cup f^{-1}(t_{\ell})$ is at least $C+\varepsilon$. Since $diam(f^{-1}(s_{\ell}))$ and $diam(f^{-1}(t_{\ell}))$ are both bounded from above by $C$, there must exist $x_{\ell} \in f^{-1}(s_{\ell})$ and $y_{\ell} \in f^{-1}(t_{\ell})$ such that $d(x_{\ell}, y_{\ell})\geq C+\varepsilon$. The compactness of $f^{-1}([c,d])$ implies that, up to subsequence, we can suppose that $x_{\ell} \rightarrow p$ and $y_{\ell}\rightarrow q$, with $f(p)=f(q)=\overline{t}$. In particular, for large $\ell$, we have
    \begin{align*}
        d(x_{\ell}, y_{\ell})\leq d(x_{\ell}, p)+d(p,q)+d(q, y_{\ell}) < C+ \varepsilon.
    \end{align*}
    This is a contradiction, and the claim is proved.

    Fix a parametrization of $\gamma : \mathbb{S}^1\rightarrow M$. Fix $\varepsilon>0$, and choose $\delta>0$ given by the claim above. Then there exists a Morse function $h : \mathbb{S}^1\rightarrow \mathbb{R}$, with a single critical point in each critical level, such that $|h-f\circ \gamma |_{C^0}< \delta/2$. Theorem \ref{prop.algoritmo.v2} provides us with a sweepout $\{p_t, q_t\}_{t\in [0,1]}$ of $\mathbb{S}^1$ satisfying $h(p_t) = h(q_t)$. Hence, for every $t\in [0,1]$,
    \begin{equation*}
        |f(\gamma(p_t))- f(\gamma(q_t))|< \delta.
    \end{equation*}
    Thus, we obtained a sweepout $\{\gamma(p_t), \gamma(q_t)\}_{t\in [0,1]}$ of $\gamma$ such that
    \begin{equation*}
    d(\gamma(p_t), \gamma(q_t)) \leq diam(f^{-1}(f(\gamma(p_t)))\cup f^{-1}(f(\gamma(q_t)))) < C+ \varepsilon
    \end{equation*}
    for every $t\in [0,1]$. Therefore $\mathcal{S}_g(\gamma)\leq C+\epsilon$. Since $\varepsilon>0$ is arbitrary, the result follows.
\end{proof}

\subsection{Auxiliary lemmas} \label{section-aux-lemm}

The next lemma concerns the situation where the distance to some line or ray is bounded. Recall that the Busemann function associated to the ray $\sigma: [0, +\infty ) \rightarrow M$ is defined by $B_{\sigma}(p): = \lim_{t\rightarrow +\infty} d(p, \sigma(t)) - t$.
     
    \begin{lemm} \label{prop-busemann-->strong-urysohn}
        Let $(M,g)$ be a complete non-compact Riemannian manifold. If $\sigma$ is a ray in $M$ for which $N_C(\sigma) = M$ for some $C >0$, then the associated Busemann function $B_\sigma : M \to \mathbb{R}$ is such that 
        \begin{equation*}
            \sup\{ \text{diam}(B_\sigma^{-1}(\lambda)) : \lambda \in \mathbb{R} \}\le 4 C.
        \end{equation*}
        If $\sigma$ is a line in $M$ for which $N_C(\sigma)=M$ for some $C>0$, then the same estimate holds for the diameter the level sets of the Busemann function of any ray contained in $\sigma$.
    \end{lemm}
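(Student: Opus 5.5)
The plan is to show that every point of a level set $B_\sigma^{-1}(\lambda)$ lies within distance $C$ of one specific point of $\sigma$, up to an error of size at most $C$. Two points of the same level set are then within $4C$ of each other by the triangle inequality.

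First, for any $p\in M$, I will pick $t_p\ge 0$ with $d(p,\sigma(t_p))\le C$, using $N_C(\sigma)=M$. I will use two standard facts about Busemann functions: $B_\sigma$ is $1$-Lipschitz, and $B_\sigma(\sigma(t))=-t$ because $\sigma$ is a ray. Together these give $|B_\sigma(p)+t_p|\le C$.

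Now let $p,q\in B_\sigma^{-1}(\lambda)$. Then $|t_p-t_q|\le |t_p+\lambda|+|\lambda+t_q|\le 2C$, and hence
\begin{equation*}
d(p,q)\le d(p,\sigma(t_p))+|t_p-t_q|+d(\sigma(t_q),q)\le C+2C+C=4C.
\end{equation*}
This gives the first statement.

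For the line case, let $\sigma:\mathbb{R}\to M$ be a line with $N_C(\sigma)=M$, and let $\sigma_1(t)=\sigma(t+a)$ for $t\ge 0$ be a ray contained in it (the reversed orientation is handled symmetrically). The new difficulty is that the nearest point $\sigma(t_p)$ may lie on the part of $\sigma$ not covered by $\sigma_1$, i.e.\ $t_p<a$. I expect this to be the only real obstacle. To handle it, I will use that $\sigma$ is a line, so $d(\sigma(s),\sigma(t))=|s-t|$ for all real $s,t$. This yields $B_{\sigma_1}(\sigma(s))=\lim_{t\to\infty}\bigl(|t+a-s|-t\bigr)=a-s$ for every $s\in\mathbb{R}$, not just for $s\ge a$. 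So $B_{\sigma_1}$ restricted to the whole line is still an affine function of unit slope.

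The previous computation then applies verbatim with $t_p\in\mathbb{R}$. We get $|B_{\sigma_1}(p)-(a-t_p)|\le C$, so $|t_p-t_q|\le 2C$ for $p,q$ on the same level set, and therefore $d(p,q)\le 4C$.
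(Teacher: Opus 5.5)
Your proof is correct, and for the ray case it is the paper's argument. The paper derives $-t_p-d(p,\sigma)\le B_\sigma(p)\le -t_p+d(p,\sigma)$ directly from the triangle inequality with $\sigma(t)$, $t>t_p$, which is exactly what your $1$-Lipschitz property together with $B_\sigma(\sigma(t))=-t$ gives, and it then concludes $|t_{p_1}-t_{p_2}|\le 2C$ and $d(p_1,p_2)\le 4C$ as you do.

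The paper's written proof only treats a ray (it takes $t_p\in[0,\infty)$) and leaves the line case implicit. Your observation that $B_{\sigma_1}(\sigma(s))=a-s$ for every $s\in\mathbb{R}$, combined with $d(\sigma(s),\sigma(t))=|s-t|$ along the whole line, supplies precisely the missing detail. This matters because $N_C(\sigma_1)=M$ generally fails for a half of the line, so the ray statement cannot simply be quoted.
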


    \begin{proof}
    Given $p\in M$, there exists $t_p\in [0, \infty)$ such that $d(p, \sigma) = d(p, \sigma(t_p))$. This distance is bounded from above by $C$, by assumption. We claim that, for $t>t_p$, we have
    \begin{align*}
     - t_p-d(p, \sigma) \leq d(p, \sigma(t)) - t \leq -t_p + d(p, \sigma). 
    \end{align*}
    Indeed, $d(p, \sigma(t))\leq d(p, \sigma(t_p))+d(\sigma(t_p), \sigma(t)) = d(p, \sigma) + t - t_p$ if $t>t_p$, which implies the second inequality above. The first inequality follows from $t-t_p=d(\sigma(t_p), \sigma(t))\leq d(p, \sigma(t))+d(p, \sigma(t_p))$. For fixed $p$, letting $t$ grow to infinity, one can bound the Busemann function by $- t_p-d(p, \sigma) \leq B_{\sigma}(p) \leq -t_p + d(p, \sigma)$.

    Let $p_1, p_2 \in M$ be such that $B_{\sigma}(p_1) = B_{\sigma}(p_2)$. The inequalities obtained in the previous paragraph for $p_i$, $i=1,2$, give us 
    \begin{equation*}
-t_{p_1}-d(p_1, \sigma) \leq -t_{p_2} + d(p_2, \sigma) \text{ and } -t_{p_2}-d(p_2, \sigma) \leq -t_{p_1} + d(p_1, \sigma).
    \end{equation*}
    Hence, $|t_{p_1}-t_{p_2}|\leq d(p_1, \sigma)+d(p_2, \sigma)\leq 2C$. In particular,
    \begin{equation*}
    d(p_1, p_2) \leq d(p_1, \sigma(t_{p_1}))+d(\sigma(t_{p_1}), \sigma(t_{p_2})) + d(\sigma(t_{p_2}), p_2)\leq 4C. 
    \end{equation*}
\end{proof}

Next, we analyze continuous real functions whose level sets have uniformly bounded diameter.

  \begin{lemm} \label{prop-finite-graph-vs-R}
        Let $(M,g)$ be a complete non-compact Riemannian manifold. The following assertions are equivalent:
        \begin{enumerate}
            \item There exists a continuous function $f: M \to \mathbb{R}$ all of whose level sets $f^{-1}(t)$ have uniformly bounded diameter.
            
            \item The manifold $M$ has at most two ends, and there exist a finite graph $G$ and a continuous function $f: M \to G$ such that 
$$\sup \{ diam f^{-1}(h) : h \in G \} < +\infty.$$
        \end{enumerate}
    \end{lemm}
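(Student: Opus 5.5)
The plan is to prove the two implications separately. The common theme is that a continuous map with uniformly bounded fibres behaves coarsely like a proper map, and this controls the ends of $M$. Throughout, $q\in M$ is a base point and $C$ bounds the diameters of the level sets.

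\emph{(1) $\Rightarrow$ (2).} Let $f:M\to\mathbb{R}$ have level sets of diameter at most $C$.

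1. Compose $f$ with a homeomorphism $\mathbb{R}\to(-1,1)\subset[-1,1]$. The interval $[-1,1]$ is a finite graph, and composing with a homeomorphism does not change the level sets. So the second requirement of (2) holds for free, and it remains to bound the number of ends.

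2. Fix $c=f(q)$. The level set $f^{-1}(c)$ lies in $B_C(q)$. Hence for $R>C$ the function $f-c$ has no zeros on the compact set $\partial B_R(q)$. It also has no zeros on $M\setminus \overline{B}_R(q)$. By the intermediate value theorem, $f-c$ therefore has constant sign on each connected component of $M\setminus\overline{B}_R(q)$.

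3. Suppose there were three unbounded components. By pigeonhole, two of them, $V_1$ and $V_2$, carry the same sign, say $f>c$. Let $\eta>0$ be the minimum of $|f-c|$ on $\partial B_R(q)$.
\begin{itemize}
\item[(a)] Take a path inside $V_i\cup\partial V_i$ from $\partial V_i$ to a point far out in $V_i$. Along it, $f$ starts at a value $\geq c+\eta$.
\item[(b)] A level set $f^{-1}(t)$ meeting both $V_1$ and $V_2$ at points far from $B_R(q)$ has diameter larger than $C$. This is impossible.
\item[(c)] So the sets $f(V_1)$ and $f(V_2)$ can share only values $t$ whose level sets stay within distance $C$ of $B_R(q)$.
\end{itemize}
Each $f(V_i)$ is an interval with one endpoint at least $c+\eta$ (from (a)). Combined with (c), this forces the images of $V_1$ and $V_2$ to be "almost disjoint" intervals. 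The same reasoning also forbids $f$ from being bounded on an unbounded component. Indeed, if $f(V_1)\subset[c,c']$, then infinitely many disjoint far points would have values in a compact interval. Taking limits of level sets, as in the proof of Theorem \ref{thm-diam-->width} (once properness on compact value ranges is established), gives a contradiction. Properness of $f$ on preimages of compact intervals is the point I expect to need the most care. I would prove it by the same sphere-and-sign argument applied to each value $c'\in[a,b]$, using compactness of $[a,b]$. With properness in hand, each unbounded component on which $f>c$ must have $f\to+\infty$, and there can be only one such component. Likewise for $f<c$. This gives at most two ends.

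\emph{(2) $\Rightarrow$ (1).} Let $f:M\to G$ have fibres of diameter at most $C'$, and suppose $M$ has at most two ends.

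1. Take a line $\sigma$ (two ends) or a ray $\sigma$ (one end) in $M$.

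2. Aim: show $N_{C''}(\sigma)=M$ for some $C''$. Then Lemma \ref{prop-busemann-->strong-urysohn} shows that the Busemann function $B_\sigma$ is a real-valued function with level sets of diameter at most $4C''$, which is (1).

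3. To bound the distance of a point $p$ to $\sigma$, I would argue by contradiction, as in the proof of Theorem \ref{thm-widts-->dist-lines}. Suppose $p_n$ are points with $d(p_n,\sigma)\to\infty$.
\begin{itemize}
\item[(a)] Join $p_n$ by minimizing segments to far points of $\sigma$ lying in the same end.
\item[(b)] The images under $f$ of $\sigma$ and of these segments are paths in the compact graph $G$.
\item[(c)] Bounded fibres force points of $M$ that are far apart to have $f$-images that are separated. Since $G$ is compact, this holds quantitatively through a uniform-continuity argument on $f^{-1}$ of small neighbourhoods in $G$.
\item[(d)] So the images of the two families of far-apart sets would have to avoid each other on a finite graph with only finitely many branch points. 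Since $M$ has at most two ends, a path leaving $\sigma$ far out cannot escape to infinity along a separate branch. This contradicts the growth of $d(p_n,\sigma)$.
\end{itemize}

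I expect this last step, converting bounded fibres over a finite graph into a bounded distance to a ray or line, to be the main obstacle. It is where the hypothesis of at most two ends must be used together with the finiteness of $G$.
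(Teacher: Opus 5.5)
Both directions contain a genuine gap.

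\emph{(1)$\Rightarrow$(2).} Three of your claims are false: that $f$ cannot be bounded on an unbounded component, that $f^{-1}([a,b])$ is compact for every compact interval, and that $f\to+\infty$ on an unbounded component where $f>c$. A counterexample is the flat cylinder $\mathbb{R}\times\mathbb{S}^1$ with $f(x,\theta)=\arctan x$. Its level sets have diameter $\pi$, $f$ is bounded, $f^{-1}([0,\pi/2])$ is not compact, and $f\to\pi/2$ as $x\to+\infty$.

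Properness can only hold over compact intervals contained in $f(M)$. Your sphere-and-sign argument does not give even that, because it does not exclude $f\to c'$ along a divergent sequence inside a single component on which $f-c'$ has constant sign.

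The missing idea is that an attained value is never a limit of $f$ along a divergent sequence. Suppose $x_n\to\infty$ and $f(x_n)\downarrow c'\in f(M)$. Choose $z\in\partial\{f>c'\}\subset f^{-1}(c')$; this boundary is nonempty by connectedness, and bounded. A small ball around $z$ attains every value in $[c',c'+t_0]$ for some $t_0>0$. So for large $n$ the value $f(x_n)$ is attained both near $z$ and at $x_n$, contradicting the diameter bound. If instead $f(x_n)\uparrow c'$, use $\{f<c'\}$.

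With this, $f$ tends to $\beta=\sup f(M)\notin f(M)$ on every unbounded component where $f>c$. If two such components $V_1,V_2$ existed, then for $t$ close to $\beta$ the fibre $f^{-1}(t)$ would avoid $\overline{B}_{R+C}(q)$ and meet both $V_i$. It would then have diameter $>2C$, a contradiction. So this direction is repairable along your lines. The repair is what the paper extracts from its Claims (a)--(c): each end has a non-attained limit value, approached from one side, and distinct ends approach differently.

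\emph{(2)$\Rightarrow$(1).} Here the decisive step is missing, and step (c) is false: bounded fibres do not force far-apart points to have quantitatively separated images. Compose the example above with $\mathbb{R}\to(0,1)\subset[0,1]=G$. The points $(n,0)$ and $(2n,0)$ are at distance $n$, while their images are $O(1/n)$-close. There is no uniform control near points of $G$ approached at infinity, and (d) is not an argument.

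What you need is the paper's Claim (a). For an end with nested neighbourhoods $U_n$, the sets $f(U_n)$ eventually lie in a single component $I$ of $\overline{B_\varepsilon(g)}\setminus\{g\}$ for some $g\in G$, and they shrink to $g$. Its proof uses four facts:
\begin{itemize}
\item compactness of $G$;
\item boundedness of $f^{-1}(g)$;
\item $\partial B_\varepsilon(g)$ is a finite set of points with bounded preimages, so the connected sets $f(U_n)$ cannot leave $\overline{B_\varepsilon(g)}$;
\item connectedness of $U_n$.
\end{itemize}

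Once this is available, your Busemann route does work, and it is a genuine alternative to the paper's graph surgery and collapsing. Take a ray (one end), or a line whose halves go into the two ends (such a line exists by the usual limiting argument). Then $f(\sigma([T,\infty)))$ is a connected subset of $I$ accumulating at $g$. Hence it contains the whole subsegment of $I$ between $g$ and $f(\sigma(T))$. So every point far out in that end lies in a fibre meeting $\sigma$, which gives $N_{C''}(\sigma)=M$, and Lemma \ref{prop-busemann-->strong-urysohn} yields (1). This is exactly where ``at most two ends'' is used, since $\sigma$ must reach every end. As written, though, this direction is not proved.
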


    \begin{proof}
Let $G$ be a finite graph and $f: M \to G$ be a continuous map such that there exists $C>0$ satisfying $\text{diam}(f^{-1}(h)) \le C$, for every $h \in G$. Fix an exhaustion $K_1\subset K_2 \subset \ldots\subset  K_n\subset \ldots$ of $M$ by compact sets. Every end of $M$ is identified by a sequence of nested unbounded sets $U_1\supset U_2 \supset \ldots \supset U_m \supset \ldots$, where $U_n$ is a connected component of $M\setminus K_n$. Let us begin by explaining the behavior of $f(p)$ as the point $p$ diverges to infinity at the end $N$ determined by $\{U_n\}$. Endow the graph with the metric for which every edge of $G$ has length one, and the distance is the shortest length of paths joining endpoints.  \\

\textbf{Claim:} \begin{enumerate}
\item[(a)] There exists $g=g(N) \in G$ such that for small $\varepsilon>0$, there exists $n_0 \in \mathbb{N}$ and a connected component $I(N, g, \varepsilon)$ of $\overline{B_{\varepsilon}(g)} \setminus \{g\}$ such that $f(U_n)\subset I(N, g, \varepsilon)$, for all $n\geq n_0$. This can be interpreted as: $f$ converges monotonically to a point $g\in G$ as the point of $M$ diverges to infinity in a given end.
\item[(b)] A convergent sequence $p_n \rightarrow p$ in $M$ such that $f(p_n)$ converges to $g=g(N)$ must satisfy $f(p_n) \notin I(N, g, \varepsilon)$, for large $n$ and small $\varepsilon>0$.
\item[(c)] If $N$ and $N^{\prime}$ are different ends with the same $g(N) = g(N^{\prime})$, then, for small $\varepsilon$, one must have $I(N, g, \varepsilon) \neq I(N^{\prime}, g, \varepsilon)$.
\end{enumerate}

\begin{proof}[Proof of the Claim]
Let $\{p_n\}$ be a sequence in $M$ diverging towards the end $N$. For instance, one can take $p_n \in U_n$, for all $n\in \mathbb{N}$. Passing to a subsequence, if necessary, we may assume that $f(p_n)$ converges to some $g \in G$. Since the sequence diverges, the main hypothesis on $f$ easily implies that, for large $n$, $f(p) \neq g$, whenever $p\in U_n$. 

For small $\varepsilon$, $\overline{B_{\varepsilon}(g)} \setminus \{g\}$ is a disjoint union of finitely many connected components which are homeomorphic to line segments open at $g$, and closed at the other endpoint. For large $n$, $f(U_n) \subset \overline{B_{\varepsilon}(g)}\setminus \{g\}$. Indeed, we already know that $g$ does not belong to that image. It remains to verify the inclusion $f(U_n) \subset \overline{B_{\varepsilon}(g)}$. Otherwise, there would be a sequence $\{q_n\}$, such that $q_n\in U_n$ and $f(q_n) \notin \overline{B_{\varepsilon}(g)}$. On the other hand, we already know that the connected set $f(U_n)$ intersects that ball. Hence, $f(U_n)$ would contain one of the finitely many boundary points of $\overline{B_{\varepsilon}(g)}$, for large $n$. Which contradicts the main hypothesis on $f$.

Using the connectedness once more, observe that $f(U_n)$ is contained in one of the connected components of $\overline{B_{\varepsilon}(g)} \setminus \{g\}$, for large $n$. Because the sequence is nested, this connected component does not depend on $n$, and we use $I(N, g, \varepsilon)$ to denote it. This proves the first part of the claim.

Part (b) is proven by contradiction. Since $I(N,g, \varepsilon^{\prime})\subset I(N,g, \varepsilon)$ for $0<\varepsilon^{\prime}<\varepsilon$ sufficiently small, it suffices to show the result for one small value of $\varepsilon$. Suppose, there exists $p_n \rightarrow p$ such that $f(p_n)$ converges to $g$ and $f(p_n) \in I(N, g, \varepsilon)$, for large $n$. Up to subsequence, there would be a divergent sequence $\{q_n\}$ in $N$ such that $f(q_n)$ belongs to the segment of $I(N,g, \varepsilon)$ contained between $f(p_n)$ and $f(p_{n+1})$. The connectedness of $U_n$ would imply on the existence of another divergent sequence $\{x_n\}$ in $N$ such that $f(x_n) = f(p_n)$, contradicting the main hypothesis on $f$. 

The proof of part (c) is similar to the proof of part (b). If two ends had the same behavior $I = I(N, g, \varepsilon) \neq I(N^{\prime}, g, \varepsilon)$, there would exist divergent sequences $\{p_n\}$ in $N$ and $\{p_n^{\prime}\}$ in $N^{\prime}$ such that $f(p_n^{\prime})$ belongs to the segment of $I$ determined by $f(p_n)$ and $f(p_{n+1})$. Then, we would find $\{q_n\}$ divergent in $N$ such that $f(q_n)=f(p_n^{\prime})$, which contradicts the hypothesis on $f$. 
\end{proof}

Next, we prove that (2) implies (1). Let us prove first the special case of the proposition in which the following additional hypothesis holds: $M$ has two ends $N_1$ and $N_2$, and there exist two different vertices $v_1,v_2 \in G$ such that a unique edge is incident to $v_i$, $i=1, 2$, and $f$ converges monotonically in the sense of the claim to $v_1$ on $N_1$, and to $v_2$ on $N_2$. In this case, for small $\varepsilon$, the closure $B$ of the complement of the $\varepsilon$-neighborhood of $\{v_1, v_2\}$ is such that $f^{-1}(B)$ is compact in $M$. In particular, $f^{-1}(B)$ has bounded diameter. Consider now the space $X$ obtained from $G$ by collapsing $B$ to a point, and let $\pi: G \rightarrow X$ denote the corresponding quotient map. Note that $X$ is homeomorphic to $[0,1]$. Define $F = \pi \circ f$ to complete the proof in this special case. Similarly, if $M$ has a single end, and $f$ converges monotonically in the sense of the claim to a vertex at which a unique edge is incident, it is possible to collapse the closure of the complement of an $\varepsilon$-neighborhood of this point and conclude that (1) also holds.

We describe a procedure to modify $f$ and $G$ slightly, by disconnecting edges from a vertex that represents the behavior of $f$ on an end, and connecting it to a new vertex. This reduces the general case to the special case above. Let $N$ be an end of $M$, represented by the nested sequence $\{U_n\}_n$. Suppose that $f$ converges to some $g \in G$ as the point diverges to infinity on $N$, in the sense of the claim. Creating more edges that subdivide edges of $G$, if necessary, we may assume that $g$ is a vertex of $G$. Denote by $h$ the unique vertex of $G$ which is connected to $g$ by an edge $E$ with the property that $f(U_n) \subset E$, for large $n$. Suppose that more edges are incident to $g$. 

Let us construct a new graph $\tilde G$, whose set of vertices is $\tilde V = V \cup \{v\}$, where $V$ is the set of vertices of $G$ and $v$ is a new vertex. Two vertices $u,w  \in \tilde V\setminus \{v, g\}$ are connected in $\tilde G$ if, and only if, they are connected in $G$. The vertices connected to $g$ in $\tilde G$ are exactly those which were connected to this vertex at $G$, with the only exception of $h$. Finally, $v$ is connected to $h$ only. There is a natural bijection between the topological space $\tilde G \backslash \{v\}$ and $G$, sending edges to edges in an affine way, which we denote by $\phi: G \to \tilde G \backslash \{v\}$. This map sends edges of $G$ to edges of $\tilde G$ with the same endpoints, and $E\setminus \{g\}$ to the edge connecting $h$ and $v$. Observe that $\phi \circ f: M \to \tilde G$ is continuous, despite the fact that $\phi$ is discontinuous at $g$. Indeed, the second part of the claim proved above implies that a convergent sequence $p_n \rightarrow p$ in $M$ such that $f(p_n)$ converges to $g$ must satisfy $f(p_n) \notin E\setminus \{g\}$, for large $n$.

Note that $v\in \tilde G$ is a special vertex, in the sense that there is a unique edge incident to it. Moreover, because $\phi$ is a bijection, the levels of $\phi\circ f$ are levels of $f$, which implies that the levels of the new function also have uniformly bounded diameter. In the case in which $M$ has two ends, if necessary, perform the analogous construction associated to the other end, which finally reduces the problem to the special case treated before.

In order to prove that (1) implies (2), it suffices to show that $M$ has at most two ends, as $f: M\rightarrow \mathbb{R}$ composed with a homeomorphism between the real line and $(0,1)$ can be considered as a function taking values on the graph that is composed of a single edge connecting two vertices. Let $h=h(N) \in \mathbb{R}$ be the value associated to an end $N$ of $M$. Assume that $N$ is associated to a nested sequence of connected non-compact subsets $\{U_n\}$. 

Let us prove that $h \notin f(M)$. Part (a) of the claim tells us that, for small $\varepsilon>0$, one would either have $h-\varepsilon <f(p)< h$, for all large $n$ and $p\in U_n$, or $h<f(p)< h+\varepsilon$, for all large $n$ and $p\in U_n$, depending on whether $I(N,h, \varepsilon) = (h-\varepsilon, h)$ or $(h, h+\varepsilon)$. If there exists $q\in M$ such that $f(q)=h$, let $\alpha:[0,1]\rightarrow M$ be a continuous path joining $p\in U_n$ and $q$. Then, $f(\alpha([0,1]))$ covers the whole closed interval joining $f(p)$ and $h$, contained in $I(N,h, \varepsilon)$. In particular, the compact set $\alpha([0,1])$ would contain a convergent sequence $p_n\rightarrow p$ such that $f(p_n)\rightarrow h$ monotonically in $I(N,h, \varepsilon)$. Which would contradict part (b) of the claim. 

Therefore, $h=h(N)$ belongs to the closure of $f(M)$, but is not to $f(M)$. Since $f(M)\subset \mathbb{R}$ is an interval, there are at most two admissible values for $h(N)$, namely its endpoints. For each endpoint, only one case $I(N,h, \varepsilon) = (h-\varepsilon, h)$ or $(h, h+\varepsilon)$ is possible, because $f(M)$ can only intersect one of these intervals. Finally, part (c) implies that two different ends of $M$ must correspond to different intervals $I(N,h, \varepsilon)$. The proof is complete.
\end{proof}

\subsection{Proof of Theorem B} \label{subsec-proof-thm-B}

    Given the preparatory results of the previous sections, we can now give a streamlined proof of Theorem \ref{theorem-non-compact}, which we state again here for the reader's convenience.

\begin{thm} \label{theorem-non-compact-v2}
       Let $(M,g)$ be a complete non-compact Riemannian manifold. The following are equivalent:
        \begin{enumerate}
            \item $\sup \{ \mathcal{S}_g(\gamma) \,|\, \gamma \text{ is an embedded circle in }  M\} < +\infty$,
            \item The manifold $M$ has at most two ends, and there exist a finite graph $G$ and a continuous function $f: M \to G$ such that 
            \begin{equation*}
                \sup_{h \in G} diam f^{-1}(h) < +\infty.
            \end{equation*}
        \end{enumerate}
	\end{thm}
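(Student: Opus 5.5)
The plan is to prove the equivalence by passing through the intermediate condition of Lemma \ref{prop-finite-graph-vs-R}: the existence of a continuous function $f:M\to\mathbb{R}$ whose level sets have uniformly bounded diameter. That lemma already shows this condition is equivalent to assertion (2). It therefore suffices to prove that (1) is equivalent to the existence of such a real-valued $f$.

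For (1) $\Rightarrow$ (2), let $S<\infty$ be the supremum in (1). By Theorem \ref{thm-widts-->dist-lines}, $M$ has at most two ends, and we split into cases according to the number of ends.

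If $M$ has two ends, I will produce a line. Take points $x_k$, $y_k$ diverging in the two different ends, and a minimizing segment joining them. Since every such segment must cross a fixed compact set separating the ends, a standard Arzel\`a--Ascoli limit gives a line $\sigma$. Item (i) of Theorem \ref{thm-widts-->dist-lines} then gives $N_S(\sigma)=M$, and Lemma \ref{prop-busemann-->strong-urysohn} shows that the Busemann function of a ray contained in $\sigma$ has level sets of diameter at most $4S$. Busemann functions are $1$-Lipschitz, hence continuous, so this gives the required real-valued function.

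If $M$ has one end, take any ray $\sigma$ from a point; rays exist in any complete non-compact manifold. Item (ii) of Theorem \ref{thm-widts-->dist-lines} gives a compact set $K$ with $M\setminus K\subset N_S(\sigma)$. Since $K$ is compact, $M=N_{C}(\sigma)$ with $C=\max\{S,\max_K d(\cdot,\sigma)\}<\infty$. Lemma \ref{prop-busemann-->strong-urysohn} again bounds the diameters of the level sets of $B_\sigma$ by $4C$. Either way we obtain a real-valued $f$ as in (1) of Lemma \ref{prop-finite-graph-vs-R}, and that lemma yields (2).

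For (2) $\Rightarrow$ (1), Lemma \ref{prop-finite-graph-vs-R} converts the graph-valued map into a continuous $f:M\to\mathbb{R}$ whose level sets have diameter at most some $C$. Theorem \ref{thm-diam-->width} then gives $\mathcal{S}_g(\gamma)\le C$ for every embedded circle $\gamma$, which is (1).

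The only step not directly quoted from earlier results is the construction of a line in the two-ended case, and I expect it to be the main obstacle. Concretely, I will fix a compact $K$ such that $M\setminus K$ has two unbounded components $U_1\ni x_k$ and $U_2\ni y_k$. Each minimizing segment from $x_k$ to $y_k$ meets $K$, so I reparametrize it by arc length with its time-zero point in $K$. Since $d(x_k,K)\to\infty$ and $d(y_k,K)\to\infty$, the segments extend arbitrarily far in both directions. A subsequence then converges, uniformly on compact sets, to a geodesic defined on all of $\mathbb{R}$ that is minimizing on every finite interval. Strict uniqueness of minimizers, as required in the paper's definition of a line, may need care. It is not essential, however: the proofs of item (i) of Theorem \ref{thm-widts-->dist-lines} and of Lemma \ref{prop-busemann-->strong-urysohn} only use that $\sigma$ is minimizing between any two of its points.
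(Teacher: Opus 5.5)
Your proposal is correct and follows the paper's proof. For (1) $\Rightarrow$ (2) you use Theorem \ref{thm-widts-->dist-lines} and Lemma \ref{prop-busemann-->strong-urysohn} applied to a Busemann function (of a ray inside a line when there are two ends, of any ray when there is one end), then Lemma \ref{prop-finite-graph-vs-R}; for (2) $\Rightarrow$ (1) you use Lemma \ref{prop-finite-graph-vs-R} and Theorem \ref{thm-diam-->width}. The only extra work is the standard limiting construction of a line in the two-ended case, which the paper just asserts. Your worry about strict uniqueness is unnecessary: a geodesic minimizing on every finite interval of $\mathbb{R}$ is automatically the unique minimizer between any two of its points, because a second minimizer, concatenated with the pieces of $\sigma$ beyond its endpoints, would be a broken minimizing curve.
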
                

\begin{proof}
    Assume $(1)$. By Theorem \ref{thm-widts-->dist-lines}, $M$ has at most two ends. If $M$ has two ends, then it necessarily has a line and, in this case, part (i) of Theorem \ref{thm-widts-->dist-lines} and Lemma \ref{prop-busemann-->strong-urysohn} imply that the level sets of the Busemann function of any ray contained in that line have uniformly bounded diameters. If, on the other hand, $M$ has a single end, then part (ii) of Theorem \ref{thm-widts-->dist-lines} applied to any ray $\sigma \subset M$ implies that $M\setminus K \subset N_S(\sigma)$, for some compact set $K$, where $S$ is an upper bound for widths of embedded circles in $(M,g)$. In particular, $M$ is contained in $N_C(\sigma)$, for $C = \max \{S, \max \{d(x, \sigma) : x\in K\}\}$. Therefore, Lemma \ref{prop-busemann-->strong-urysohn} can be applied to the  distance function to the ray $\sigma$, finishing the proof that $(1)$ implies $(2)$. The other implication follows as immediately from the combination of Lemma \ref{prop-finite-graph-vs-R} and Theorem \ref{thm-diam-->width}. 
\end{proof}

\begin{exam} \label{example-simplicial-complex-restriction}
    Let $\mathcal{C}$ denote the standard right circular cylinder $\{(x, y, z) \in \mathbb{R}^3 : x^2+y^2=1\}$. Let $g$ be a Riemannian metric on $\mathbb{S}^1\times \mathbb{R}$ obtained from the metric of $\mathcal{C}$ by replacing it on small geodesic balls centered at the points $(1, 0, k)$, $k\in \mathbb{N}$, in such a way that the modified metric on each of these balls looks like a thin spike whose diameter tends to infinity as $k\rightarrow \infty$. Let $x_k$ denote a point at the tip of the spike associated to the point $(1, 0, k)$. Rounding off the corners of the triangles of vertices $x_k, x_{2k}$, and $x_{3k}$, whose sides are minimizing geodesics, yields a sequence of embedded circles for which the min-max widths tend to infinity. Here, Proposition \ref{prop-widthestimate-triangle} is applied as in the proof of Theorem \ref{thm-widts-->dist-lines}.

    Slightly modified examples of finite area and containing circles of unbounded min-max widths can be obtained by performing the same construction on $\mathbb{S}^1\times \mathbb{R}$ with a metric of finite area. Similar constructions can also be carried out in surfaces with a single end.
\end{exam}

\subsection{Further results} \label{subsec-further-res}
    
    We now establish a result for complete Riemannian manifolds $(M,g)$, compact or not, in which the systole of $(M,g)$ is positive and attained by some homotopically non-trivial curve. Recall that the systole is the least length of homotopically non-trivial curves $c:\mathbb{S}^1\rightarrow M$: 
    \begin{equation*}
        sys(M,g) = \inf\{L_g(c) : [c]\neq 0 \text{ in } \pi_1(M)\}.
    \end{equation*}

       Any homotopically non-trivial curve $\gamma$ attaining $sys(M,g)$ is a closed embedded geodesic. We now compute its min-max width.
       
       \begin{lem}\label{lem:sys}
        Let $(M^n,g)$, $n\geq 2$, be a complete Riemannian manifold with $\pi_1(M)\neq 0$. If $\sigma$ is a homotopically non-trivial curve with length $L_g(\sigma)=sys(M,g)$, then
        \begin{equation*}
            \mathcal{S}_g(\sigma)=\frac{1}{2}sys(M,g).
        \end{equation*}
       \end{lem}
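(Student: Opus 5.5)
We prove the two inequalities $\mathcal{S}_g(\sigma)\le \frac{1}{2}sys(M,g)$ and $\mathcal{S}_g(\sigma)\ge \frac{1}{2}sys(M,g)$ separately. Write $L=L_g(\sigma)=sys(M,g)>0$, and parametrize $\sigma:\mathbb{R}/L\mathbb{Z}\to M$ by arc length. As recalled before the statement, $\sigma$ is an embedded closed geodesic.

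\textbf{Upper bound.} We use the trivial estimate $d_g(p,q)\le$ the intrinsic distance along $\sigma$, which is at most $L/2$. Any sweepout of $\sigma$ has this property. For instance, take $p_t=\sigma(-tL/2)$ and $q_t=\sigma(tL/2)$ for $t\in[0,1]$, with $C_t$ the arc from $p_t$ to $q_t$ through $\sigma(0)$. This family satisfies Definition \ref{defiweepout}: it starts degenerate at $\sigma(0)$ and ends with $C_1$ covering $\sigma$ once from $\sigma(L/2)$ back to itself. Each pair is joined by an arc of $\sigma$ of length at most $L/2$, so $\max_t d_g(p_t,q_t)\le L/2$ and hence $\mathcal{S}_g(\sigma)\le L/2$.

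\textbf{Lower bound.} We apply Lemma \ref{lem-nofixedpts-AMS} with the fixed-point-free antipodal map $\varphi(\sigma(s))=\sigma(s+L/2)$. Every sweepout then contains a pair $\{x,\varphi(x)\}$, so it suffices to show
\begin{equation*}
d_g(\sigma(s),\sigma(s+L/2))= L/2 \quad \text{for all } s.
\end{equation*}
The key geometric step is the following. Suppose, for contradiction, that $d_g(x,\varphi(x))<L/2$ for some $x=\sigma(s)$. Let $\beta$ be a minimizing geodesic from $x$ to $\varphi(x)$, of length less than $L/2$. The two arcs $\sigma_1,\sigma_2$ of $\sigma$ between $x$ and $\varphi(x)$ each have length $L/2$, and $\sigma\simeq\sigma_1*\sigma_2^{-1}$. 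Writing
\begin{equation*}
\sigma\simeq(\sigma_1*\beta^{-1})*(\beta*\sigma_2^{-1}),
\end{equation*}
we see that, since $[\sigma]\ne 0$, at least one of the loops $\sigma_1*\beta^{-1}$ or $\beta*\sigma_2^{-1}$ is homotopically non-trivial, using the based fundamental group at $x$. Each of these loops has length less than $L/2+L/2=L$, which contradicts the definition of the systole. Therefore $d_g(x,\varphi(x))\ge L/2$, and combined with the intrinsic bound it equals $L/2$. The lemma then gives $\max_t d_g(p_t,q_t)\ge L/2$ for every sweepout, so $\mathcal{S}_g(\sigma)\ge L/2$.

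\textbf{Expected obstacle.} The only delicate point is the decomposition argument: we must check that the concatenations are closed loops based at $x$ and that the homotopy identity holds in $\pi_1(M,x)$. A non-trivial free homotopy class corresponds to a non-trivial based class, so this is routine. The piecewise smoothness of the loops is harmless, since the systole is an infimum over all curves, and strictness of the length inequality suffices.
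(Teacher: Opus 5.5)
Your proof is correct. Its geometric core is the same as the paper's: split $\sigma$ along a shortcut into two loops, one of which must be homotopically non-trivial and shorter than $sys(M,g)$. The routes differ in what is proved and what is cited.

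The paper shows that for \emph{any} two points of $\sigma$ the shorter arc $\sigma_1$ is minimizing. It uses a two-case argument with a shorter geodesic $c$. If $\sigma_1$ and $c$ are not homotopic rel endpoints, their concatenation is a short non-trivial loop. If they are, then $c*\sigma_2\simeq\sigma$ is shorter than $\sigma$. The paper then invokes Theorem D of \cite{AmbMonSan}, which gives the equivalence of this property with $\mathcal{S}_g(\sigma)=\frac{1}{2}L_g(\sigma)$.

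You treat only antipodal pairs. There both arcs have length exactly $L/2$, so the factorization $[\sigma]=[\sigma_1*\beta^{-1}][\beta*\sigma_2^{-1}]$ gives two loops of length $<L$ at once, with no case split. You also replace the citation of Theorem D by its elementary content: the trivial bound $d_g\le L/2$ for the upper estimate, and Lemma \ref{lem-nofixedpts-AMS} with the antipodal map for the lower one.

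What each buys: your version is self-contained within the present paper. The paper's version records the statement that every arc of $\sigma$ of length at most $L/2$ is minimizing, and delegates the min-max step to \cite{AmbMonSan}. That statement looks stronger but is equivalent to yours, since subarcs of minimizing arcs are minimizing.
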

       \begin{proof}
        Pick two different points in $\sigma$, which divide it into two arcs $\sigma_1$, $\sigma_2$ with $L_g(\sigma_1)\leq L_g(\sigma_2)$. If $\sigma_1$ does not realize the distance between its extremities, then there is some other geodesic $c$ joining these points with length $L_g(c)<L_g(\sigma_1)$. Notice that $\sigma_1$ and $c$ are homotopic with fixed extremities, otherwise its concatenation would define a homotopically non-trivial curve in $M$ with length $<L(\sigma)$. But then, the concatenation of $c$ and $\sigma_2$ is homotopic to $\sigma$, and therefore a homotopically non-trivial curve in $M$  with length $<L_g(\sigma)$, a contradiction. 
        Therefore, the distance in $(M,g)$ between any two points of $\sigma$ is realized by the length of $\sigma_1$, the shortest arc of $\sigma$ they define. According to Theorem D in \cite{AmbMonSan}, this condition is equivalent to the equality $\mathcal{S}_g(\sigma)=\frac{1}{2}L_g(\sigma)$, as we wanted to prove.
        \end{proof}

        Combining this information with Theorem \ref{thm-diam-->width}, we obtain:
    
    \begin{cor} \label{corolario-sistoles}
        Let $(M^n,g)$, $n\geq 2$, be a Riemannian manifold with $\pi_1(M) \neq 0$. Suppose there exits a continuous function $f:M\rightarrow \mathbb{R}$ such that, for some constant $C>0$, 
        \begin{equation*}
            \sup \{ diam(f^{-1}(t)) : t\in \mathbb{R}\}\leq C.
        \end{equation*}
        If there exists a homotopically non-trivial curve $\sigma$ with length $sys(M,g)$,  then
        \begin{equation*}
            sys(M,g) \le 2 \cdot C
        \end{equation*}
    \end{cor}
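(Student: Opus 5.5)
The corollary follows by sandwiching $\mathcal{S}_g(\sigma)$ between the two results just established. Let $\sigma$ be a homotopically non-trivial curve with $L_g(\sigma)=sys(M,g)$. Lemma \ref{lem:sys} gives the exact value $\mathcal{S}_g(\sigma)=\frac{1}{2}sys(M,g)$. Theorem \ref{thm-diam-->width} gives the upper bound $\mathcal{S}_g(\sigma)\le C$. Combining the two yields $\frac12 sys(M,g)\le C$, which is the claim.

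Before combining, I will check that both results actually apply to $\sigma$.

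First, $\sigma$ must be an embedded circle, since Theorem \ref{thm-diam-->width} and the definition of $\mathcal{S}_g$ are stated for embedded circles. As noted before Lemma \ref{lem:sys}, a length-minimizing homotopically non-trivial curve is a closed embedded geodesic. If a justification is wanted, it goes as follows.
\begin{itemize}
\item If $\sigma$ had a self-intersection, it would split into two shorter loops.
\item At least one of these loops is homotopically non-trivial, since their product is $[\sigma]\neq 0$.
\item That loop would then have length strictly less than $sys(M,g)$, which is impossible.
\end{itemize}
The first variation formula then shows that $\sigma$ is a geodesic.

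Second, completeness. Lemma \ref{lem:sys} and Theorem \ref{thm-diam-->width} are stated for complete manifolds, while the corollary drops the word ``complete''. I would treat completeness as implicit, as in the surrounding section. If it is genuinely absent, the arguments should still go through. The proof of Theorem \ref{thm-diam-->width} uses completeness only to get compactness of $f^{-1}([c,d])$ from the bounded diameter of level sets. That compactness step is the one point I expect to need care. In the incomplete case I would instead argue directly on $\gamma$, or simply assume completeness as the paper does throughout.
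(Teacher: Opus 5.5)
Your proposal is correct and is exactly the paper's argument: Lemma \ref{lem:sys} gives $\mathcal{S}_g(\sigma)=\frac{1}{2}sys(M,g)$, and Theorem \ref{thm-diam-->width} gives $\mathcal{S}_g(\sigma)\le C$, which together yield $sys(M,g)\le 2C$. Your side remarks are sound. The paper tacitly assumes completeness, and the compactness step in Theorem \ref{thm-diam-->width} can indeed be run on the compact curve $\gamma$ itself rather than on $f^{-1}([c,d])$.
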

    
    \begin{proof}
        Let $\sigma$ be a homotopically non-trivial curve in $M$ with $L_g(\sigma)=sys(M,g)$. By Lemma \ref{lem:sys}, $\mathcal{S}_g(\sigma) = \frac{1}{2}sys(M,g)$. The inequality follows immediately from the estimate for $\mathcal{S}_g(\sigma)$ from above established in Theorem \ref{thm-diam-->width}.
    \end{proof}
    We remark that the inequality in Corollary \ref{corolario-sistoles} is sharp in the sense that it is attained by the right circular cylinder in $\mathbb{R}^3$ and the orthogonal projection onto its axis, or by the round real projective plane $\mathbb{RP}^2$ and the constant map. Also, by a result quoted in \cite{Bal}, the above inequality holds on compact essential manifolds (see Lemma 2.2.2 therein). The proof of that result relates the systole to the Urysohn $(n-1)$-width of the manifold, which is bounded from above by the supremum of the diameter of the leaves of any map from the manifold to the real line. It gives a better estimate than Corollary \ref{corolario-sistoles} for this class of manifolds. 
    
\section{Rigidity results for Zoll spheres} \label{section::rigidity-results}

 The Birkhoff invariant of any Zoll sphere is the common length of its geodesics. The estimate $diam(\mathbb{S}^2,g)\leq \frac{1}{2} w_{B}(\mathbb{S}^2,g)$ implies that every embedded circle in a Zoll sphere $(\mathbb{S}^2,g)$ satisfies the inequality $\mathcal{S}_g(\gamma)\leq \frac{1}{2}w_B(\mathbb{S}^2,g)$. The content of Theorem \ref{theorem-zoll-rigidity-intro}, which we will prove in the next pages, is that if some geodesic of a Zoll sphere has the maximum min-max width allowed by the latter inequality, than the Zoll sphere is round:
 
\begin{thm}\label{theorem-zoll-rigidity}
    Let $(\mathbb{S}^2,g)$ be a Zoll sphere. If there exists a closed geodesic $\sigma$ of $(\mathbb{S}^2,g)$ such that 
    \begin{equation*}
        \mathcal{S}_g(\sigma)=\frac{1}{2}\cdot w_B(\mathbb{S}^2,g),
    \end{equation*}
    then $(\mathbb{S}^2,g)$ has constant Gaussian curvature. 
\end{thm}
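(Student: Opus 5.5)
The plan is to turn the equality $\mathcal{S}_g(\sigma)=\tfrac12 w_B(\mathbb{S}^2,g)$ into the statement that one, and hence each, of the two discs bounded by $\sigma$ has all its chords of the same length, and then to invoke Bangert's classification of such manifolds with boundary \cite{Ban-chords}. Write $L=w_B(\mathbb{S}^2,g)$ for the common length of all geodesics, so that $L_g(\sigma)=L$.

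\textbf{Step 1: distances between points of $\sigma$.} By hypothesis $\mathcal{S}_g(\sigma)=\tfrac12 L_g(\sigma)$. Theorem D of \cite{AmbMonSan}, the same result used in the proof of Lemma \ref{lem:sys}, says this equality holds exactly when, for any two points $x,y\in\sigma$, the distance $d_g(x,y)$ equals the length of the shorter arc of $\sigma$ between them. In particular, antipodal points of $\sigma$ (those splitting it into two arcs of length $L/2$) are at distance $L/2$. Since $diam(\mathbb{S}^2,g)\le L/2$ on a Zoll sphere, such pairs realize the diameter, and each of the two arcs of $\sigma$ joining them is minimizing.

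\textbf{Step 2: geodesics from a point of $\sigma$.} Fix $x\in\sigma$ and let $\bar x$ be its antipode on $\sigma$. On a Zoll sphere every geodesic leaving $x$ is simple, closed and of length $L$. I expect all of them to pass through $\bar x$ at time $L/2$, i.e.\ $\bar x$ is the first conjugate point of $x$ and the cut locus of $x$ is the single point $\bar x$. The argument I would try is that $d(x,\bar x)=L/2$ equals the maximal possible distance, and a point at maximal distance $L/2$ from $x$ on a Zoll sphere must be where all geodesics from $x$ focus: a geodesic from $x$ of length $L$ reaches distance at most $L/2$ and returns, so realizing $L/2$ forces the entire pencil to close up at $\bar x$ via the first variation and the Zoll structure. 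This is the step I expect to be the main obstacle, and I would also look for a rigidity statement in the literature for Zoll spheres with a point whose cut locus is a single point.

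\textbf{Step 3: equal chords and Bangert.} Let $D$ be one of the two closed discs bounded by $\sigma$; it is totally geodesic-bounded, since $\partial D=\sigma$ is a geodesic. For $x\in\sigma$ and an inward unit vector $v$, the geodesic from $x$ in direction $v$ has length $L/2$ until it reaches $\bar x$, and by Step 2 it stays inside $D$ before that. The reason is that a geodesic meeting $\sigma$ transversally at some interior time would produce two distinct geodesics between points of $\sigma$ at distance less than $L/2$, contradicting the uniqueness implicit in Step 1. So every chord of $D$ (a geodesic in $D$ meeting $\partial D$ only at its endpoints) has length exactly $L/2$. Bangert's theorem \cite{Ban-chords} then says $D$ is isometric to a round hemisphere. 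The same holds for the complementary disc, so both hemispheres have constant curvature $(2\pi/L)^2$ and the metric $g$ has constant Gaussian curvature.
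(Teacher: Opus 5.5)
Your Steps 1 and 3 are sound and match the paper. Step 1 uses Theorem D of \cite{AmbMonSan} exactly as Lemma \ref{lem:sys} does. And once every geodesic leaving $x\in\sigma$ transversally meets $\sigma$ again only at $\bar x$, after length $L/2$, Bangert's theorem \cite{Ban-chords} finishes the proof as you say. The gap is Step 2. It is the whole content of the theorem, and the mechanism you propose cannot produce it.

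The fact that $\bar x$ is at the maximal distance $L/2$ from $x$ only involves the minimizing geodesics from $x$ to $\bar x$, namely the two halves of $\sigma$. A first-variation (Berger-type) condition at a maximum of $d(x,\cdot)$ is automatically satisfied by two opposite directions at $\bar x$, so it gives nothing. Combining maximality with the periodicity of Jacobi fields gives only statements along $\sigma$: for instance, that $\bar x$ is conjugate to $x$ along $\sigma$. None of this controls a geodesic $\gamma_v$ leaving $x$ in a direction far from $\pm\sigma'$. Moreover, the focusing you want is the Wiedersehen property at $x$. By Green's theorem \cite{Gre} it fails at some point of every non-round Zoll sphere, so it cannot come from the Zoll structure plus a heuristic like ``a geodesic reaches distance at most $L/2$ and returns''. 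It has to be extracted from the hypothesis on $\sigma$ in an essential way.

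The paper supplies this step by using $\mathcal{S}_g(\sigma)=L/2$ as a genuine min-max statement, tested against a sweepout of $\sigma$ adapted to $\gamma_v$ (Lemma \ref{key-lemma-zoll}):
\begin{itemize}
\item Lemma \ref{optimal-sweepout-lemma} gives a foliation $\{\Sigma_s\}$ of $\mathbb{S}^2$, via a homeomorphism, with $\Sigma_0=\gamma_v$, $L_g(\Sigma_s)<L$ for $s\neq 0$, and $L_g(\Sigma_s)$ close to $L$ only for $s$ near $0$.
\item Applying Theorem \ref{prop.algoritmo.v2} to a Morse approximation of the leaf parameter restricted to $\sigma$ gives a sweepout of $\sigma$ whose pairs lie on nearly the same leaf. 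Such pairs are at distance at most about $\frac12 L_g(\Sigma_s)$.
\item This sweepout must contain pairs at distance at least $\mathcal{S}_g(\sigma)-\varepsilon=L/2-\varepsilon$. Those pairs are therefore forced near $\gamma_v$.
\item In the limit, $\sigma\cap\gamma_v$ contains two points at distance $L/2$. Since distinct geodesics of a Zoll sphere meet in exactly two points \cite{Sab}, $\gamma_v\cap\sigma=\{x,\bar x\}$.
\item Each arc of $\gamma_v$ between $x$ and $\bar x$ has length at least $d(x,\bar x)=L/2$, and the two lengths sum to $L$, so both equal $L/2$.
\end{itemize}
This is precisely what your Step 3 needs. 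It also shows directly, without any cut-locus argument, that each arc stays in one of the two discs.
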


The proof requires some intermediate steps. The first is a well known result about geodesics in Zoll spheres, and the second is a consequence of Theorem \ref{prop.algoritmo.v2}. 

\begin{lemm} \label{optimal-sweepout-lemma}
    Let $(\mathbb{S}^2,g)$ be a Zoll sphere. If $\sigma$ is a closed geodesic of $(\mathbb{S}^2,g)$, then there exists a sweepout $\{\Sigma_t\}_{t \in [-1,1]}$ of $\mathbb{S}^2$ with the following properties:
    \begin{itemize}
        \item[$(i)$] $\Sigma_0 = \sigma$ and there exists a homeomorphism $F:\mathbb{S}^2_0\rightarrow \mathbb{S}^2$ such that $\Sigma_t = F(\overline{c}_t)$ for every $t\in [-1,1]$;
        \item[$(ii)$] $L_g(\Sigma_t) \le L_g(\sigma)$, for every $t \in [-1,1]$, and $L_g(\Sigma_t) = L_g(\sigma)$ if and only if $t=0$; and
        \item[$(iii)$] for every $\varepsilon>0$, there exists $\delta>0$ such that $L_g(\Sigma_t) > w_B(\mathbb{S}^2,g) - \varepsilon$ if and only if $|t|<\delta$.
    \end{itemize}  
\end{lemm}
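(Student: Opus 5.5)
We will build the sweepout from the geodesics normal to $\sigma$, exploiting the Zoll property that all geodesics are simple closed curves of the same length $L := w_B(\mathbb{S}^2,g) = L_g(\sigma)$.

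\textbf{Setup.} Parametrize $\sigma$ by arc length $u\in[0,L)$ and let $\nu(u)$ be a continuous unit normal. For each $u$, the geodesic $\exp_{\sigma(u)}(r\nu(u))$ is a simple closed geodesic of length $L$. In a Zoll sphere every geodesic from a point $p$ returns to $p$ at time $L$ and first reaches the cut locus at time $L/2$. Moreover, the points at distance $L/2$ along all geodesics from $p$ coincide in a single "antipodal" point, and the antipodal map is an isometric involution of the Zoll structure.

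\textbf{Candidate foliation.} For $t\in(-1,1)$, set
\[
\Sigma_t = \{\exp_{\sigma(u)}(tL/4\,\nu(u)) : u\in[0,L)\}.
\]
These are equidistant curves to $\sigma$. We expect the normal geodesics from $\sigma$ to focus at two "pole" points at distance $L/4$ on either side. This is the Zoll analogue of great circles through the poles, and it holds because the geodesics normal to $\sigma$ form a one-parameter family of closed geodesics meeting $\sigma$ orthogonally. In that case $\Sigma_{\pm1}$ collapse to points and $\Sigma_t$ gives a foliation of $\mathbb{S}^2$ minus two points, so it is $F(\overline{c}_t)$ for a homeomorphism $F$ (after reparametrizing $t$). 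This gives $(i)$.

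\textbf{Main obstacle.} The hard step is proving the focusing, i.e.\ that all normal geodesics from $\sigma$ meet at a common point at distance $L/4$. This is false for general Zoll metrics, so the construction above must be replaced. I would instead use the standard fact that in a Zoll sphere the space of oriented geodesics is $\mathbb{S}^2$. The plan is then:
\begin{enumerate}
\item Take the "rotation" of $\sigma$ through geodesics, $\sigma_\theta$, obtained by following the geodesic flow of the unit tangent bundle, which is an $\mathbb{S}^1$-bundle isomorphic to $\mathbb{RP}^3$.
\item Use a Morse-theoretic curve-shortening (Birkhoff) flow. Since $\sigma$ is a nondegenerate-in-the-quotient critical point of index one, and the min-max level is realized by a critical manifold, flow the two sides of $\sigma$ by the curve-shortening flow.
\end{enumerate}

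\textbf{Curve-shortening route.} Concretely, I would define $\Sigma_t$ for $t>0$ as the curve-shortening flow, at time $\phi(t)$, of a small pushoff of $\sigma$ to one side, and similarly for $t<0$. By Grayson's theorem the evolving curves stay embedded and either converge to a closed geodesic or shrink to a round point. They cannot converge to a geodesic: every geodesic has length $L$, while the pushoff has length strictly less than $L$ because the second variation of length along the Jacobi field is nonpositive. So each side shrinks to a point, and the evolution gives disjoint embedded curves sweeping each hemisphere. This yields $(i)$. Length is strictly decreasing under the flow, which gives $(ii)$. The pushoffs $\exp(s\nu)$ have length $<L$ for small $s\neq0$ by second variation; if equality holds at second order, we can choose the pushoff along a Jacobi field direction, using that all geodesics have length $L$, to get strictness at higher order.

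\textbf{Property $(iii)$.} This follows from compactness and continuity. $L_g(\Sigma_t)$ is continuous and equals $L$ only at $t=0$, so for $|t|\ge\delta$ it is bounded by $L-\varepsilon$. We can then adjust $\delta$ so that the "if and only if" holds, using the fact that $L_g(\Sigma_t)$ is monotone in $|t|$.
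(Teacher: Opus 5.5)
Your overall architecture is the right one, and it matches the paper's sketch: push $\sigma$ off to each side, run curve-shortening flow, use Grayson's dichotomy, and rule out a limit geodesic because every geodesic has length $L=L_g(\sigma)$. However, the step that makes this work is missing, and what you offer in its place is incorrect.

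\textbf{The pushoff is not shown to be shorter.} For the constant normal pushoff $\exp_{\sigma(u)}(s\nu(u))$, the second variation of length is $\int_\sigma \big((f')^2-Kf^2\big)\,du=-\int_\sigma K\,du$ with $f\equiv 1$. Nothing you say controls the sign of $\int_\sigma K\,du$; Gauss--Bonnet controls $\int K$ over the two discs bounded by $\sigma$, not along $\sigma$.

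Your fallback goes in the wrong direction, for three reasons:
\begin{itemize}
\item A normal Jacobi field $f\nu$ along $\sigma$ satisfies $f''+Kf=0$, so the second variation in its direction is exactly zero, not negative.
\item These are precisely the directions tangent to the two-parameter family of neighbouring closed geodesics, all of length exactly $L$, so no ``strictness at higher order'' can be extracted from them.
\item Such an $f$ is a periodic eigenfunction of $-\frac{d^2}{du^2}-K$ that is not the ground state, so it changes sign. Hence $\exp(sf\nu)$ crosses $\sigma$ and is not a one-sided pushoff at all.
\end{itemize}

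\textbf{The missing idea is the index count.} Every Jacobi field along $\sigma$ is periodic, so $0$ is a \emph{double} periodic eigenvalue of the Jacobi operator $-\frac{d^2}{du^2}-K$. Its lowest periodic eigenvalue $\lambda_0$, by contrast, is simple with a positive eigenfunction $\phi_0$. Hence $\lambda_0<0$, i.e. $\sigma$ has index at least one. Pushing $\sigma$ along $s\phi_0\nu$ gives curves on one side with $L_g=L-as^2+o(s^2)$, $a>0$. Equally important, these curves have curvature vector pointing away from $\sigma$.

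\textbf{The setup contains false claims.} A common point at distance $L/2$ along all geodesics from $p$, and cut locus exactly at distance $L/2$, both fail for general Zoll spheres. The first is the Wiedersehen property, which by Green's theorem forces constant curvature. They are unused in your final route, but you should not assert them.

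\textbf{Property $(i)$ needs the outward-pointing curvature.} You assert that the flow ``gives disjoint embedded curves sweeping each hemisphere''. Curves of a curve-shortening flow at different times need not be disjoint. They are nested when the flow moves monotonically, which is guaranteed if the geodesic curvature $k$ with respect to the normal pointing away from $\sigma$ is positive. Positivity of $k$ is preserved, since $k_t=k_{ss}+k^3+Kk$ and the maximum principle applies. But it must hold initially: this is true for the eigenfunction pushoff and not guaranteed for $\exp(s\nu)$. Without nestedness, $F$ fails to be injective.

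\textbf{The family must reach $\sigma$.} As written, your $\Sigma_t$ for small $t>0$ is the flow of a fixed pushoff, so it does not converge to $\sigma$ as $t\to 0^+$. You need the collar $\{\exp(s\phi_0\nu)\}_{0\le s\le s_0}$ between $\sigma$ and the pushoff as part of the sweepout, with length strictly decreasing in $s$ there. The paper chooses $\varepsilon_0$ so that this length function is strictly concave with maximum at $0$.

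\textbf{Properties $(ii)$ and $(iii)$.} Strict monotonicity of length in $|t|$, on both the collar and the flow part, is what gives $(ii)$ and $(iii)$. For the ``if and only if'' in $(iii)$ you must also reparametrize the two sides so that $L_g(\Sigma_t)=L_g(\Sigma_{-t})$. Monotonicity alone only yields a superlevel set $(-\delta_-,\delta_+)$, possibly with $\delta_-\neq\delta_+$.
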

\begin{proof}[Sketch of proof]
    As $\sigma$ lies within a two-parameter family of geodesics, the nullity of $\sigma$ is at least two, which implies that its index is at least one. Then, for some small $\varepsilon_0>0$, there exists a foliation $\{\Sigma_t\}_{t\in (-\varepsilon_0,\varepsilon_0)}$ around $\sigma=\Sigma_0$, so that the $L_g(\Sigma_t)=L_g(\sigma)-at^2+o(t^2)$ as $t\rightarrow 0$ for some $a>0$. Moreover, the mean curvature vectors of each $\Sigma_t$, $t\neq 0$, points away from $\sigma$. The value of $\varepsilon_0>0$ is chosen so that the function $t \mapsto L_g(\Sigma_t)$ is strictly concave for $t \in (-\varepsilon_0,\varepsilon_0)$ with a maximum point at $t=0$.
    
    Apply the curve shortening flow to each curve $\Sigma_{-\varepsilon_0}$ and $\Sigma_{\varepsilon_0}$. The flow preserves embeddings, the flowing curves lie in $\mathbb{S}^2\setminus \cup_{t\in (-\varepsilon_0,\varepsilon_0)} \Sigma_t$, and all flowing curves have less length than the initial curve. By a theorem of Grayson \cite{Gra}, if the flow does not terminate in finite time at a point, while foliating the respective regions $\mathbb{S}^2$ bounded by its initial strictly convex curve, it accumulates at some closed geodesic. But this is impossible as all geodesics have length $L_g(\sigma)$. Combining these flows and the foliation $\{\Sigma_t\}_{t\in(-\varepsilon_0,\varepsilon_0)}$ in the obvious way, we obtain the desired sweepout of $\mathbb{S}^2$.
    \end{proof}

\begin{lemm} \label{key-lemma-zoll}
    Let $(\mathbb{S}^2,g)$ be a Zoll sphere. If
    \begin{equation*}
        \sup\{ \mathcal{S}_g(\gamma) : \gamma \text{ is an embedded circle in } \mathbb{S}^2\} = \frac{1}{2}w_B(\mathbb{S}^2,g),    
    \end{equation*} 
    then diam$(\mathbb{S}^2,g) = \frac{1}{2}w_B(\mathbb{S}^2,g)$ and, for every closed geodesic $\sigma$, there exist points $p_\sigma$ and $q_\sigma$ in $\sigma$ such that $d(p_\sigma,q_\sigma) = \frac{1}{2}w_B(\mathbb{S}^2,g)$. 
    
    Moreover, if there exists an embedded circle $\alpha$ such that $\mathcal{S}_g(\alpha)=\frac{1}{2}w_B(\mathbb{S}^2,g)$, then the points $p_\sigma$ and $q_\sigma$ can be chosen to lie on $\alpha$.
\end{lemm}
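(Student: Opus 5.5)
The plan is to compare a sweepout of the sphere by the curves of Lemma \ref{optimal-sweepout-lemma} with a sweepout of an embedded circle built by the method of Theorem \ref{thm.compare-Birkhoff}, and to let $\varepsilon\to 0$ so that the two points of a long pair are forced onto the geodesic $\sigma$.

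Write $\ell=\frac{1}{2}w_B(\mathbb{S}^2,g)$. The diameter statement comes first. Every embedded circle satisfies $\mathcal{S}_g(\gamma)\le diam(\mathbb{S}^2,g)$, since the width of any sweepout is at most the diameter. Also $diam(\mathbb{S}^2,g)\le \ell$ holds for Zoll spheres, because every geodesic from $p$ closes up with length $2\ell$ and therefore stops minimizing after length $\ell$. Taking the supremum over $\gamma$ in the hypothesis gives $\ell\le diam\le \ell$.

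For the statement about $\sigma$, fix a closed geodesic $\sigma$ and the sweepout $\{\Sigma_s\}$ of Lemma \ref{optimal-sweepout-lemma}, with $\Sigma_s=F(\overline{c}_s)$ for a homeomorphism $F$ and $\Sigma_0=\sigma$. Pick embedded circles $\gamma_n$ with $\mathcal{S}_g(\gamma_n)\to \ell$; in the ``moreover'' case take $\gamma_n=\alpha$ for all $n$. Fix $\varepsilon>0$ and apply Theorem \ref{thm.compare-Birkhoff} to $\gamma_n$ and $\{\Sigma_s\}$. Because $F$ is a homeomorphism, $\Sigma_s$ is genuinely a foliation, so I can take the pair of levels $s_x(t), s_y(t)$ to be close as numbers and not only in Hausdorff distance. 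By definition of $\mathcal{S}_g$, some $t$ satisfies $d(x_t,y_t)\ge \mathcal{S}_g(\gamma_n)$.

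The estimate in the proof of Theorem \ref{main-theorem} gives $d(x_t,y_t)<\varepsilon+\frac12 L_g(\Sigma_{s_y(t)})$. Hence $L_g(\Sigma_{s_y(t)})>2\mathcal{S}_g(\gamma_n)-2\varepsilon$. Property (iii) of Lemma \ref{optimal-sweepout-lemma} then forces $|s_y(t)|<\delta(\varepsilon)$ once $n$ is large, and $s_x(t)$ is close to $s_y(t)$. Consequently $x_t$ and $y_t$ lie within $\varepsilon'(\varepsilon)\to 0$ of $\Sigma_0=\sigma$, and $d(x_t,y_t)\ge \ell-\varepsilon-o(1)$.

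Now let $\varepsilon\to 0$ and $n\to\infty$ and pass to a subsequence. The limit points $p_\sigma,q_\sigma$ lie on $\sigma$ by closedness, and they satisfy $d(p_\sigma,q_\sigma)\ge \ell$. Since $\ell$ is also an upper bound for distances, $d(p_\sigma,q_\sigma)=\ell$. In the ``moreover'' case $x_t,y_t\in\alpha$, which is compact, so the limits lie on $\alpha\cap\sigma$.

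The main obstacle is making sure the two points land near $\sigma$ itself, and not just near curves of length close to $2\ell$. Property (iii) handles this, but it requires the sweepout of Theorem \ref{thm.compare-Birkhoff} to control the actual parameters $s_x, s_y$. That control is available here: the Claim in the proof of Theorem \ref{thm.compare-Birkhoff} places $(\theta_t^p, s_t^p)\in\Omega$ within $\tau$ of $(\theta(p_t),s(p_t))$, and similarly for $q_t$, with $|s(p_t)-s(q_t)|<\tau$. So I would rerun that proof, recording $|s_x(t)-s_y(t)|<3\tau$ and using points exactly on $\Sigma_{s_x}$ and $\Sigma_{s_y}$, with $\tau$ chosen depending on $\delta$.
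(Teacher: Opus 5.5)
Your proposal is correct and follows essentially the paper's route. You take the sweepout of Lemma \ref{optimal-sweepout-lemma}, induce a sweepout on a nearly width-maximizing circle whose pairs lie near leaves with nearby parameters, and use property (iii) at a pair of nearly maximal distance to force both leaves close to $\sigma$, with the circle chosen equal to $\alpha$ in the ``moreover'' case. You then pass to the limit.

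The differences are minor:
\begin{itemize}
\item The paper does not go through Theorem \ref{thm.compare-Birkhoff}. It uses that $F$ is a homeomorphism to define $f$ with $f^{-1}(t)=\Sigma_t$, and applies Theorem \ref{prop.algoritmo.v2} directly to a Morse approximation of $f\circ\beta_\varepsilon$. The sweepout points then lie exactly on leaves with $|s_p-s_q|<\tau$, so no rerun of the Claim is needed.
\item The paper obtains $diam(\mathbb{S}^2,g)=\frac{1}{2}w_B(\mathbb{S}^2,g)$ from the limiting pair. Your upfront bound $\mathcal{S}_g(\gamma)\le diam(\mathbb{S}^2,g)$ is equally valid and more direct.
\end{itemize}
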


\begin{proof}
     Let $\sigma$ be a closed geodesic of $(\mathbb{S}^2,g)$ and consider the sweepout $\{\Sigma_t\}_{t \in [-1,1]}$ given by Lemma \ref{optimal-sweepout-lemma}, with the corresponding homeomorphism $F$. Given $\varepsilon >0$, let $\delta=\delta(\varepsilon)>0$ be as in Lemma \ref{optimal-sweepout-lemma}. Let $\tau \in (0,\delta)$ be such that $d_H(\Sigma_{t_1},\Sigma_{t_2})< \frac{1}{4}\varepsilon$ if $|t_1-t_2|<\tau$, where $d_H$ denotes the Hausdorff distance.
    
     We define a continuous map $f:\mathbb{S}^2\to \mathbb{R}$ by $f( F(\overline{c}_t(\theta))) = t$, using that $F$ is a homeomorphism. Let $\beta_\varepsilon: \mathbb{S}^1 \to \mathbb{S}^2$ be an embedded circle such that $\mathcal{S}(\beta_\varepsilon) > \frac{1}{2}w_B(\mathbb{S}^2,g) - \frac{1}{4}\varepsilon$. We approximate $f \circ \beta_\varepsilon: \mathbb{S}^1 \to \mathbb{R}$ by a Morse function $\xi$ such that every critical level of $\xi$ contains a unique critical point and $|f\circ \beta_\varepsilon - \xi|_{C^0} < \frac{1}{2}\tau$. By Theorem \ref{prop.algoritmo.v2} there exists a sweepout $\{p_t,q_t\}_{t \in [0,1]}$ of $\mathbb{S}^1$ by pairs of points such that $\xi(p_t) = \xi(q_t)$ for every $t \in [0,1]$. Thus $\{\beta_\varepsilon(p_t),\beta_\varepsilon(q_t)\}_{t \in [0,1]}$ is a sweepout of $\beta_\varepsilon$ by pairs of points and $|f(\beta_\varepsilon(p_t))-f(\beta_\varepsilon(q_t))| < \tau$ for every $t \in [0,1]$. 
     
     Introducing the notation $s_p(t) = f(\beta_\varepsilon(p_t))$ and $s_q(t) = f(\beta_\varepsilon(q_t))$, we compute
     \begin{equation*}
        d(\beta_\varepsilon(p_t),\beta_\varepsilon(q_t)) \le d_H(\Sigma_{s_p(t),},\Sigma_{s_q(t)}) + \frac{1}{2}L_g(\Sigma_{s_q(t)}) < \frac{1}{4}\varepsilon+ \frac{1}{2}L_g(\Sigma_{s_q(t)})
     \end{equation*} 
     
     \noindent for every $t \in [0,1]$.  
    
    Consider $t_* \in [0,1]$, $t_* = t_*(\varepsilon)$, such that $d(\beta_\varepsilon( p_{t_*}), \beta_\varepsilon (q_{t_*})) > \frac{1}{2}w_B(\mathbb{S}^2,g) - \frac{1}{4}\varepsilon$. Then 
    \begin{equation*}
        \frac{1}{2}w_B(\mathbb{S}^2,g) - \frac{1}{4}\varepsilon < \frac{1}{4}\varepsilon+ \frac{1}{2}L_g(\Sigma_{s_q(t_*)}),
    \end{equation*}
    and we conclude that $|s_q(t_*)| < \delta$, because of the properties of the sweepout $\{\Sigma_t\}_{t \in [-1,1]}$ described in Lemma \ref{optimal-sweepout-lemma}. Analogously, $|s_p(t_*)|<\delta$. 
    
    Letting $\varepsilon\to 0^+$, we have $\delta \to 0^+$, and we may assume that $\beta_\varepsilon(p_{t_*(\varepsilon)}) \to p_\sigma$, $\beta_\varepsilon(q_{t_*(\varepsilon)}) \to q_\sigma$. Then $d(p_\sigma,q_\sigma) \ge \frac{1}{2}w_B(\mathbb{S}^2,g)$. Since every Zoll sphere has diameter at most $\frac{1}{2}w_B(\mathbb{S}^2,g)$, we conclude that $diam(\mathbb{S}^2,g) = \frac{1}{2}w_B(\mathbb{S}^2,g)$ and that $d(p_\sigma,q_\sigma) = \frac{1}{2}w_B(\mathbb{S}^2,g)$. 
    
    If there exists an embedded circle $\alpha$ such that $S(\alpha)=\frac{1}{2}w_B(\mathbb{S}^2,g)$, then $\beta_\varepsilon$ in the argument above can be chosen equal to be the curve $\alpha$, and the construction then forces $p_\sigma$ and $q_\sigma$ to lie on $\alpha$. 
\end{proof}

\begin{proof}[Proof of Theorem \ref{theorem-zoll-rigidity}]
    Fix $p \in \sigma$ and let $v \in T_p \mathbb{S}^2$ denote a unit vector that is not tangent to $\sigma$. Let us denote by $\gamma_v$ the closed geodesic that leaves $p$ with velocity $v$. By Lemma \ref{key-lemma-zoll}, $diam(\mathbb{S}^2,g) = \frac{1}{2}w_B(\mathbb{S}^2,g)$ and there exist points $q$ and $\hat q$ in $\gamma_v \cap \sigma$ such that $d(q,\hat q) = \frac{1}{2}w_B(\mathbb{S}^2,g)$. In a Zoll sphere, any two distinct geodesics intersect at exactly two points (\textit{cf.} \cite{Sab}, Theorem 3.3). Thus, we proved that there exists $\hat p \in \mathbb{S}^2$ such that $d(p,\hat p) = \frac{1}{2}w_B(\mathbb{S}^2,g)$ and $\gamma_{v} \cap \sigma = \{p,\hat p\}$. Therefore, each connected component of $\gamma_{v} \backslash \{p,\hat p\}$ has length $\frac{1}{2}w_B(\mathbb{S}^2,g)$. 
    
    Let us denote by $U_1$ and $U_2$ the connected components of the complement of $\sigma$ in $\mathbb{S}^2$. We now prove that the surface with boundary $(\overline{U}_1,g)$ has constant Gaussian curvature. A chord in $(\overline{U}_{1},g)$ is a geodesic segment $\gamma:[0,1] \to \overline{U}_{1}$ such that $\gamma(0) \in \partial \overline{U}_{1}$, $\gamma(1) \in \partial \overline{U}_{1}$ and $\gamma(t) \in \overline{U}_{1} \backslash \partial \overline{U}_{1}$ for every $t \in (0,1)$. In the previous paragraph, we proved that every chord in $(\overline{U}_{1},g)$ has length $\frac{1}{2}w_B(\mathbb{S}^2,g)$. By Bangert's theorem \cite{Ban-chords}, in this situation $(\overline{U}_1,g)$ is isometric to a round hemisphere. Analogously, $(\overline{U}_2,g)$ has constant Gaussian curvature, and this completes the proof. 
\end{proof}

    The following corollary holds for Zoll spheres admitting a circle action by isometries (for instance, the original rotationally symmetric spheres in $\mathbb{R}^3$ that were constructed originally by Zoll \cite{Zol}).

\begin{cor}
    Let $(\mathbb{S}^2,g)$ be a Zoll sphere, and suppose $g$ is intrinsically rotationally symmetric. If 
    \begin{equation*}
        \sup\{ \mathcal{S}_g(\gamma) : \gamma \text{ is an embedded circle in } \mathbb{S}^2\}  = \frac{1}{2}w_B(\mathbb{S}^2,g),
    \end{equation*}
    \noindent then $g$ has constant Gaussian curvature.
\end{cor}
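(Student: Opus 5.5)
The plan is to reduce the corollary to Theorem \ref{theorem-zoll-rigidity}: I will show that the supremum is attained by a closed geodesic. The rotational symmetry is what makes this possible, since it supplies a natural candidate, namely a geodesic through the two fixed points of the circle action. Let $S^1$ act isometrically on $(\mathbb{S}^2,g)$ with fixed points $N$ and $S$. Every meridian is invariant under the reflection symmetry, and in a Zoll sphere every geodesic through $N$ is closed, passes through $S$, and is the union of two meridians. By Lemma \ref{key-lemma-zoll}, the hypothesis gives $diam(\mathbb{S}^2,g)=\frac{1}{2}w_B(\mathbb{S}^2,g)$. I will also show that the diameter is realized by $N$ and $S$: by Lemma \ref{key-lemma-zoll}, every closed geodesic $\sigma$ contains two points at distance $\frac{1}{2}w_B$, which splits $\sigma$ into two arcs of length exactly $\frac12 w_B$, each of them minimizing. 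Applying this to a geodesic $\sigma$ through $N$ and $S$, and using the rotational symmetry (the isometries fix $N$ and $S$ and rotate $\sigma$ through all meridian geodesics), we get that each closed geodesic through the poles contains an antipodal-type pair at distance $\frac12 w_B$.

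The key step is to show that such a meridian geodesic $\sigma$ satisfies $\mathcal{S}_g(\sigma)=\frac12 w_B$. I would use Theorem D of \cite{AmbMonSan}, as in Lemma \ref{lem:sys}: $\mathcal{S}_g(\sigma)=\frac12 L_g(\sigma)$ holds iff the distance between any two points of $\sigma$ equals the length of the shorter arc of $\sigma$ between them. Here $L_g(\sigma)=w_B$, so it is enough to show that every subarc of $\sigma$ of length at most $\frac12 w_B$ is minimizing. First I show $d(N,S)=\frac12 w_B$. Since all geodesics from $N$ are closed of length $w_B$ and pass through $S$, a symmetry argument (the conjugate/cut locus of $N$ is $S$ by rotational invariance, since the cut locus is a rotation-invariant tree) gives that $S$ is at distance $\frac12 w_B$ along every meridian, and each meridian arc from $N$ to $S$ is minimizing. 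For a general pair $x,y\in\sigma$, the plan is to exploit the fact that on a Zoll sphere the geodesics through $x$ reconverge at a single conjugate point after length $\frac12 w_B$ (the antipodal map of a Zoll sphere, by Lemma \ref{key-lemma-zoll} applied with diameter $\frac12 w_B$). Then the segment of $\sigma$ from $x$ of length up to $\frac12 w_B$ contains no conjugate point before the end, and I would argue it is minimizing because any shorter competitor together with it would form a geodesic digon inside a hemisphere cut by a meridian, which is excluded by the fact that two distinct geodesics meet in exactly two points at distance $\frac12 w_B$ (\cite{Sab}).

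Once $\mathcal{S}_g(\sigma)=\frac12 w_B(\mathbb{S}^2,g)$ is established, Theorem \ref{theorem-zoll-rigidity} gives constant curvature. The main obstacle is the minimizing property of arcs of length at most $\frac12 w_B$ on a meridian geodesic, which in general Zoll spheres may fail, as the diameter can drop. My first attempt would use the diameter equality from Lemma \ref{key-lemma-zoll} together with rotational symmetry, aiming to show that the injectivity radius equals $\frac12 w_B$ at the points of $\sigma$. Failing that, I would directly use the pair $p_\sigma,q_\sigma\in\sigma$ at distance $\frac12 w_B$ given by Lemma \ref{key-lemma-zoll} and rotate it by the isometric circle action to produce enough extremal pairs for Bangert-type arguments.
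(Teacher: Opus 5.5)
\textbf{There is a genuine gap, and it comes from your choice of geodesic.} Everything you actually establish about a meridian geodesic $\sigma$ through the poles holds in \emph{every} Zoll sphere of revolution, including Zoll's non-round examples. This covers $d(N,S)=\frac12 w_B$, the cut locus of $N$ being $\{S\}$, the diameter being $\frac12 w_B$, and $\sigma$ containing a pair at distance $\frac12 w_B$. Indeed, the paper notes that Zoll spheres of revolution always have diameter $\frac12 w_B$. The pair that Lemma \ref{key-lemma-zoll} provides on $\sigma$ can simply be $\{N,S\}$. So on a meridian the hypothesis gives you no new information. By Theorem \ref{theorem-zoll-rigidity}, together with $\mathcal{S}_g\le$ diameter $\le\frac12 w_B$, every closed geodesic of a non-round Zoll sphere has $\mathcal{S}_g<\frac12 w_B$. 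Hence your key step cannot follow from these facts.

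The two claims you lean on also fail. First, geodesics from a point of a Zoll sphere do not in general reconverge at a single point after length $\frac12 w_B$. If that held at every point, it would be the Wiedersehen property, which already forces constant curvature by \cite{Gre}; Lemma \ref{key-lemma-zoll} says nothing of this kind. Second, \cite{Sab} only says that two distinct geodesics meet in exactly two points, not that these points are at distance $\frac12 w_B$. The paper deduces that distance in the proof of Theorem \ref{theorem-zoll-rigidity} only after knowing $\mathcal{S}_g(\sigma)=\frac12 w_B$, so using it here is circular. Your fallback of rotating $\{p_\sigma,q_\sigma\}$ does not help either. The circle action moves a meridian geodesic to other geodesics rather than acting on it, and the natural pair $\{N,S\}$ is fixed by the action.

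\textbf{The missing idea is to use the closed geodesic $\sigma_0$ that is invariant under the circle action.} This is a parallel at a critical point of the warping function, for instance the longest parallel; the paper takes its existence from \cite{Bes}. The circle acts freely and transitively on $\sigma_0$.
\begin{itemize}
\item Lemma \ref{key-lemma-zoll} applied to $\sigma_0$ gives $p,\hat p\in\sigma_0$ with $d(p,\hat p)=\frac12 w_B$. This is genuinely new information there.
\item Define $\phi(\theta\cdot p)=\theta\cdot\hat p$. It is a well-defined continuous self-map of $\sigma_0$ without fixed points, and $d(x,\phi(x))=\frac12 w_B$ for all $x$ because the action is isometric.
\item Lemma \ref{lem-nofixedpts-AMS} (equivalently Theorem C of \cite{AmbMonSan}) then gives $\mathcal{S}_g(\sigma_0)\ge\frac12 w_B$. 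The reverse inequality comes from the diameter bound.
\item Theorem \ref{theorem-zoll-rigidity} finishes the proof.
\end{itemize}
Your overall reduction to Theorem \ref{theorem-zoll-rigidity} is the right one. The hypothesis simply has to be applied to the invariant geodesic, not to a meridian.
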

\begin{proof}
    From the classification of intrinsically rotationally symmetric Zoll spheres (see Theorem 4.11 in \cite{Bes}), there exists a closed geodesic $\sigma$ of $(\mathbb{S}^2,g)$ that is invariant under the isometric rotation action, which we denote by $\theta: \mathbb{S}^1 \times \mathbb{S}^2 \to \mathbb{S}^2$. Lemma \ref{key-lemma-zoll} implies that  $diam(\mathbb{S}^2,g) = \frac{1}{2}w_B(\mathbb{S}^2,g)$, and that there exist points $p$ and $\hat p$ in $\sigma$ such that $d(p,\hat p) = \frac{1}{2}w_B(\mathbb{S}^2,g)$. The map $\phi: \sigma \to \sigma$ defined by $\phi(\theta \cdot p) = \theta \cdot \hat p$ is continuous and satisfies $d(x,\phi(x)) = d(p,\hat{p})$ for every $x \in \sigma$. By Theorem C in \cite{AmbMonSan}, $\mathcal{S}_g(\sigma)=\frac{1}{2}w_B(\mathbb{S}^2,g)$ and therefore $g$ has constant Gaussian curvature, by Theorem \ref{theorem-zoll-rigidity}.  
\end{proof}

    We now consider the invariant $W_d(\mathbb{S}^2,g)$. The following result is essentially contained in the proof of Theorem 1.12 in \cite{MonRib}.
    
    \begin{prop}[\textit{cf.} \cite{MonRib}, Theorem 1.12] \label{thm-comparison}
    If $(\mathbb{S}^2,g)$ is a Riemannian sphere, then
    \begin{equation*}
    w_B(\mathbb{S}^2,g) \ge 2 \cdot W_d(\mathbb{S}^2,g).
    \end{equation*}
    
    \end{prop}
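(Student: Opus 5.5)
Given $\varepsilon>0$, take a Birkhoff sweepout $\{\Sigma_t\}_{t\in[-1,1]}$, $\Sigma_t=F\circ\overline{c}_t$ with $F:\mathbb{S}^2_0\to\mathbb{S}^2$ of degree one, such that $\sup_t L_g(\Sigma_t)\le w_B(\mathbb{S}^2,g)+\varepsilon$. The plan is to build from it a sweepout of $\mathbb{S}^2$ by pairs of points in the sense of \cite{MonRib} whose pairs lie on a common curve $\Sigma_t$ and are ``antipodal'' along it. Parametrize $\overline{c}_t(\theta)$ by angle, and for $x\in\mathbb{S}^2_0$ let $A(x)$ denote the point of the horizontal circle through $x$ obtained by rotating by $\pi$ about the $z$-axis; this is the antipodal map composed with the reflection $z\mapsto -z$. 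It fixes the poles, so it is continuous on all of $\mathbb{S}^2_0$. Define
\begin{equation*}
\phi_1 = F, \qquad \phi_2 = F\circ A .
\end{equation*}
For $x\in\overline{c}_t$, both $\phi_1(x)$ and $\phi_2(x)$ lie on $\Sigma_t$ and split it into two arcs. After reparametrizing $F$ so that each $\Sigma_t$ is parametrized proportionally to arclength (harmless, up to a small perturbation near degenerate curves), these two arcs have equal length. Hence $d_g(\phi_1(x),\phi_2(x))\le \tfrac12 L_g(\Sigma_t)\le \tfrac12(w_B+\varepsilon)$.

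The first step is therefore the reparametrization. I would replace $F$ by $F\circ R$, where $R$ is a homeomorphism of $\mathbb{S}^2_0$ preserving each horizontal circle and reparametrizing $F\circ\overline{c}_t$ by constant speed. Near circles where $L_g(\Sigma_t)$ vanishes, or where $F\circ\overline{c}_t$ is constant on an arc, I would instead use a slightly perturbed parametrization with speed bounded below, namely arclength of $F\circ\overline{c}_t$ plus $\eta$ times the Euclidean angle. This changes the distance estimate by only $O(\eta)$. Continuity of $R$ in $t$ follows from continuity of the lengths of the piecewise smooth family.

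The second step, which I expect to be the main obstacle, is to check that $x\mapsto\{\phi_1(x),\phi_2(x)\}$ is homotopic in $\mathcal{P}_{\mathbb{S}^2}$ to the standard sweepout $\Phi_0(x)=\{x,x_0\}$. First, homotope $F$ to the identity, which is possible since $F$ has degree one. This carries the family to $x\mapsto\{x,A(x)\}$. Next, deform $A$ through the maps $A_s$ given by rotation by angle $s\pi$ about the $z$-axis, combined with pushing toward the north pole so that the end map is constant equal to $x_0$. One has to verify that the relevant invariant of maps $\mathbb{S}^2\to\mathcal{P}_{\mathbb{S}^2}\cong\mathbb{CP}^2$, namely the degree in $H_2(\mathbb{CP}^2)\cong\mathbb{Z}$, is unchanged. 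I would compute it via the pullback under the double branched cover $\mathbb{S}^2\times\mathbb{S}^2\to\mathcal{P}_{\mathbb{S}^2}$: the class of $x\mapsto\{x,\psi(x)\}$ corresponds to $(1,\deg\psi)$ modulo the swap, and its image in $H_2(\mathbb{CP}^2)$ is $1+\deg\psi$. This gives $1+1=2$ for $\psi=A$, which is orientation preserving, versus $1+0=1$ for $\Phi_0$. These do not match.

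So the naive choice fails, and I would instead follow the argument of \cite{MonRib}, Theorem 1.12, more closely. Use the two-parameter family $x\mapsto\{\Sigma_t(\theta),\Sigma_t(\theta+\pi)\}$, but parametrize the domain sphere by $(t,\theta)$ with $\theta\in[0,\pi]$ only. The pairs at $\theta$ and $\theta+\pi$ coincide as unordered pairs, so $\theta\in[0,\pi]$ already covers each pair once. Near the poles the pairs degenerate to singletons (the curve shrinks), giving a map from a sphere into $\mathcal{P}_{\mathbb{S}^2}$. The degree then comes out as half of the previous computation, namely $1$, matching $\Phi_0$; this halving is the computation I would check carefully. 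The width bound is unchanged at $\tfrac12(w_B+\varepsilon)$, and letting $\varepsilon\to0$ gives $w_B(\mathbb{S}^2,g)\ge 2\,W_d(\mathbb{S}^2,g)$.
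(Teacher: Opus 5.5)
Your final family is not a sweepout by pairs of points in the sense used here. The definition recalled in the introduction asks for two \emph{continuous} maps $\phi_1,\phi_2:\mathbb{S}^2\to\mathbb{S}^2$ whose induced map to $\mathcal{P}_{\mathbb{S}^2}$ is homotopic to $\Phi_0$. Your map lives on the quotient $\mathbb{S}^2_0/A$, with coordinates $(t,\theta)$, $\theta\in\mathbb{R}/\pi\mathbb{Z}$, and it admits no such splitting. Going once around a latitude $\theta\in[0,\pi]$ at fixed $t$, the ordered pair $(\Sigma_t(\theta),\Sigma_t(\theta+\pi))$ starts at $(\Sigma_t(0),\Sigma_t(\pi))$ and ends at $(\Sigma_t(\pi),\Sigma_t(0))$. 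So as soon as $\Sigma_t(\theta)\neq\Sigma_t(\theta+\pi)$ for all $\theta$ (e.g.\ $\Sigma_t$ embedded), no continuous ordering exists.

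Your homological computations are correct: $\{F,F\circ A\}$ has class $1+\deg A=2$, and the quotient map has class $1$ because $\mathbb{S}^2_0\to\mathbb{S}^2_0/A$ has degree $2$. They expose the real tension. With $\phi_1=F$ of degree one, a family given by two maps has class $1+\deg\phi_2$, so you need $\deg\phi_2=0$, which the half-turn pairing $F\circ A$ cannot give. Passing to the quotient fixes the class only by destroying the splitting into two maps. If arbitrary maps into $\mathcal{P}_{\mathbb{S}^2}$ homotopic to $\Phi_0$ were admissible, your quotient map would do, but that is not the class over which $W_d$ is defined here.

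The missing observation is that antipodality along $\Sigma_t$ is never needed. Any two points $a,b$ of a closed curve of length $L$ satisfy $d_g(a,b)\le L/2$, since the two arcs of the curve between them have total length $L$. The paper therefore pairs each point of $\Sigma_t$ with one fixed point of the same curve: $\phi_1=F$ and $\phi_2(\overline{c}_t(\theta))=F(\overline{c}_t(\theta_0))$ for a fixed $\theta_0$. Then $d_g(\phi_1(x),\phi_2(x))\le\frac12 L_g(\Sigma_t)$ for $x\in\overline{c}_t$. The map $\phi_2$ factors through the height function $z:\mathbb{S}^2_0\to[-1,1]$, hence is null-homotopic, while $F\simeq\mathrm{id}$ by Hopf's theorem, so $\{\phi_1,\phi_2\}\simeq\{x,x_0\}$.

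This also removes your reparametrization step, which is just as well. For a merely continuous $F$ with piecewise smooth slices, $t\mapsto L_g(\Sigma_t)$ need not be continuous, so your arclength reparametrization $R$ need not be continuous in $t$.
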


\begin{proof}
    Let $F:S^2_1(0) \to \mathbb{S}^2$ be a continuous map of degree one such that $F(\overline{c}_t(\cdot))$ is smooth by parts for every $t \in [-1,1]$. We parametrize the curves $\overline{c}_t:[0,2\pi) \to \mathbb{S}^2_1(0)$ proportionally to their arc-length. Fix $t_0 \in [0,2\pi)$ and consider the continuous maps $F_i : S^2_1(0) \to \mathbb{S}^2$, $i=1,2$, defined by $F_1=F$ and $F_2(\overline{c}_t(s)):= F(\overline{c}_t(t_0))$ for every $t \in [-1,1]$. Then $F_1$ is homotopic to the identity map, by Hopf's degree theorem, and $F_2$ is homotopic to a constant map. Hence $\{F_1(x),F_2(x)\}_{x \in \mathbb{S}^2}$ defines a sweepout by pairs of points of the sphere $(\mathbb{S}^2,g)$, in the sense of \cite{MonRib}, and satisfies:
    \begin{equation}\label{chain-ineq-wd-vs-wb}
        W_d(\mathbb{S}^2,g) \le \max_{x \in \mathbb{S}^2} d_g(F_1(x),F_2(x)) \le \max_{t \in [-1,1]} \frac{1}{2} L_g(F \circ \overline{c}_t).
    \end{equation} 

    Since $F$ is arbitrary, we conclude that $ W_d(\mathbb{S}^2,g) \le \frac{1}{2}w_B(\mathbb{S}^2,g)$. 
\end{proof}

Among Zoll spheres, the equality in Proposition \ref{thm-comparison} characterizes the round metric:

\begin{thm}\label{thm-Zollrigiditywd}
    Let $(\mathbb{S}^2,g)$ be a Zoll sphere. If $w_B(\mathbb{S}^2,g) = 2 \cdot W_d(\mathbb{S}^2,g)$, then $(\mathbb{S}^2,g)$ has constant Gaussian curvature.
\end{thm}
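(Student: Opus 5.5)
The plan is to reduce Theorem \ref{thm-Zollrigiditywd} to the same chord-length rigidity used in the proof of Theorem \ref{theorem-zoll-rigidity}, via Bangert's theorem \cite{Ban-chords}. The key point is that equality $W_d = \frac{1}{2}w_B$ forces every closed geodesic to contain an antipodal-type pair at distance $\frac{1}{2}w_B$. This is analogous to Lemma \ref{key-lemma-zoll}, but the sweepout by circles must now be replaced by a sweepout by pairs of points of the whole sphere.

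Fix a closed geodesic $\sigma$ and let $\{\Sigma_t\}$ and the homeomorphism $F$ be given by Lemma \ref{optimal-sweepout-lemma}. As in the proof of Proposition \ref{thm-comparison}, I will build a sweepout by pairs of points $\{F_1(x),F_2(x)\}$ with $F_1=F$. Instead of a fixed base point, $F_2(x)$ will be a point of the same curve $\Sigma_t$ roughly half a length away from $F_1(x)$. Concretely, set $F_2(\overline{c}_t(s)) = F(\overline{c}_t(s+\pi))$, with $\overline{c}_t$ parametrized proportionally to arc length and $F\circ\overline{c}_t$ reparametrized proportionally to $g$-arc length.

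The first step is to check this is a legitimate sweepout in the sense of \cite{MonRib}, i.e.\ homotopic to $\{x,x_0\}$. The map $F_2$ is $F$ composed with the antipodal-like map $\overline{c}_t(s)\mapsto \overline{c}_t(s+\pi)$, which has degree $-1$, not $0$. So the $F_2$ in the proof of Proposition \ref{thm-comparison} is the wrong model, and I must be careful here.

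Alternative: keep the proof of Proposition \ref{thm-comparison} exactly, with $F_2(\overline{c}_t(s))=F(\overline{c}_t(t_0))$. Since $F$ is a homeomorphism, $d_g(F_1(x),F_2(x))\le \frac12 L_g(\Sigma_t)$. By property (iii) of Lemma \ref{optimal-sweepout-lemma}, pairs with distance close to $\frac12 w_B$ must lie on curves $\Sigma_t$ with $|t|<\delta$.

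The second step uses equality: since $W_d=\frac12 w_B$, every sweepout has a pair at distance $\ge \frac12 w_B-\varepsilon$. To get information for \emph{every} base point, I will vary the choice $t_0$, or more generally let the base point $F(\overline{c}_t(t_0))$ be any prescribed point of $\sigma$. For each $p\in\sigma$ this produces points $x_\varepsilon$ on $\Sigma_{t_\varepsilon}$, $|t_\varepsilon|<\delta(\varepsilon)$, with $d(x_\varepsilon, p_\varepsilon)\ge\frac12 w_B-\varepsilon$, where $p_\varepsilon\to p$. Passing to the limit gives $\hat p\in\sigma$ with $d(p,\hat p)=\frac12 w_B$.

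In particular $diam=\frac12 w_B$. Every point $p$ of every closed geodesic then has a point $\hat p$ at distance $\frac12 w_B$ lying on that geodesic, and hence $\hat p$ is cut/conjugate-type antipodal along it.

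The third step follows the proof of Theorem \ref{theorem-zoll-rigidity}. Since $d(p,\hat p)=\frac12 w_B$ equals half the length of any geodesic through $p$, every geodesic from $p$ is minimizing up to length $\frac12 w_B$. Because the diameter is $\frac12 w_B$ and all geodesics through $p$ have length $w_B$ and pass through $\hat p$ when $\hat p$ is at maximal distance, all geodesics from $p$ meet again at $\hat p$. Thus for $\gamma_v$ through $p\in\sigma$, the points $\gamma_v\cap\sigma=\{p,\hat p\}$ by \cite{Sab}, Theorem 3.3, so all chords of $\overline{U}_i$ have length $\frac12 w_B$. Bangert's theorem then gives round hemispheres and hence constant curvature.

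The main obstacle is the second step. I need to ensure the maximizing pair can be forced to have one point at an arbitrary prescribed $p\in\sigma$, and then deduce that all geodesics from $p$ pass through $\hat p$. For the latter, I will argue that $\hat p$ is at distance $w_B/2=$ half the common length. Then for any geodesic $\gamma_v$ from $p$, the point $\gamma_v(w_B/2)$ lies at distance $\le w_B/2$ from $p$. I will combine this with the fact that a minimizing geodesic from $p$ to $\hat p$ exists in every direction, obtained by rerunning the argument with $\sigma$ replaced by $\gamma_v$, which contains $p$.
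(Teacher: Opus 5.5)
Your second step is sound and coincides with the paper's key step. With $F_2(\overline{c}_t(s))=F(\overline{c}_t(t_0))$, the chain \eqref{chain-ineq-wd-vs-wb}, the hypothesis $W_d=\frac12 w_B$ and property (ii) of Lemma \ref{optimal-sweepout-lemma} force a maximizing pair on $\Sigma_0=\sigma$. Hence every $p\in\sigma$ is at distance $\frac12 w_B$ from its antipode $\hat p$ along $\sigma$. (You can skip the $\varepsilon$-approximation, since the maximum is attained. Also, in your discarded first attempt, $\overline{c}_t(s)\mapsto\overline{c}_t(s+\pi)$ is the rotation by $\pi$ about the $z$-axis, which has degree $+1$, not $-1$.)

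The gap is in your third step. There you assert that every geodesic from $p$ returns through $\hat p$ at time $\frac12 w_B$. This is a Wiedersehen-type property, which, holding at every point, already gives the conclusion by Green's theorem, so it cannot be taken for granted. None of your justifications establishes it:
\begin{itemize}
\item Knowing $d(p,\hat p)=\frac12 w_B$ for one point $\hat p$ says nothing about the other geodesics through $p$.
\item Rerunning step two with $\sigma$ replaced by $\gamma_v$ only yields $d(p,\gamma_v(\frac12 w_B))=\frac12 w_B$. That is a minimizing geodesic in direction $v$ from $p$ to the antipode of $p$ along $\gamma_v$, not to $\hat p$.
\end{itemize}
Nothing you wrote excludes several distinct points at distance $\frac12 w_B$ from $p$. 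So the identification $\gamma_v\cap\sigma=\{p,\hat p\}$, and with it the chord-length condition needed for Bangert's theorem, is unjustified.

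The paper avoids this issue. From the antipodal property of $\sigma$ it obtains $\mathcal{S}_g(\sigma)=\frac12 w_B$ via Theorem C of \cite{AmbMonSan}, then invokes Theorem \ref{theorem-zoll-rigidity}. There, the second intersection point of $\gamma_v$ with $\sigma$ is identified using Lemma \ref{key-lemma-zoll} applied to $\gamma_v$ with $\alpha=\sigma$: the far pair on $\gamma_v$ can be chosen in $\gamma_v\cap\sigma$, which has exactly two points.

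Your own route can also be completed, but it needs an argument you did not give:
\begin{enumerate}
\item Rerun step two on every closed geodesic. This shows that every geodesic segment of length at most $\frac12 w_B$ is minimizing.
\item Let $p'$ be the second point of $\gamma_v\cap\sigma$, and suppose $d(p,p')<\frac12 w_B$. Then the shorter arcs of $\sigma$ and of $\gamma_v$ from $p$ to $p'$ are two distinct minimizing geodesics.
\item Consequently neither arc can minimize beyond $p'$, which contradicts the first item.
\end{enumerate}
Hence $d(p,p')=\frac12 w_B$, each arc of $\gamma_v\setminus\{p,p'\}$ has length $\frac12 w_B$, and Bangert's theorem applies.
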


\begin{proof}
    Let $\sigma:\mathbb{R}/\mathbb{Z} \to (\mathbb{S}^2,g)$ be a closed geodesic parametrized proportionally to arc-length. Consider the sweepout constructed from $\sigma$ as in Lemma \ref{optimal-sweepout-lemma} and the corresponding homeomorphism $F$. We construct a sweepout of $\mathbb{S}^2$ by pairs of points as in Proposition \ref{thm-comparison}: we pair the map $F_1=F$ with the map $F_2(\overline{c}_t(s))=F(\overline{c}_t(t_0))$, where $t_0 \in [0,2\pi)$ is fixed. The equalities in the chain of inequalities in \eqref{chain-ineq-wd-vs-wb} for this sweepout then show that 
    \begin{equation*} \label{minimization-prop}
        d(\sigma(t_0),\sigma(t_0+1/2)) = \frac{1}{2}w_B(\mathbb{S}^2,g).
    \end{equation*}
    Since $t_0$ is arbitrary, it follows that $\mathcal{S}_g(\sigma)= \frac{1}{2}w_B(\mathbb{S}^2,g)$, by Theorem C in \cite{AmbMonSan}. The result now follows from Theorem \ref{theorem-zoll-rigidity}.    
\end{proof}

    Theorem \ref{teo-E--Wd-vs-wB} now follows by combining Proposition \ref{thm-comparison} and Theorem \ref{thm-Zollrigiditywd}.

    We remark that the proof of Theorem \ref{thm-Zollrigiditywd} could have ended with an application of L. W. Green's Theorem \cite{Gre}. This is because it was obtained that every piece of geodesic of length not bigger than $\frac{1}{2}w_B(\mathbb{S}^2,g)$ is minimizing. A Zoll sphere with this property is a \textit{wiedersehensfl\"ache}, in the sense of considered by L. W. Green in \cite{Gre}.

\section{Examples} \label{section::Examples}

We collect examples that illustrate different aspects of the theory developed in this paper. The first example concerns the relation between the Almgren-Pitts min-max width and the min-max width of embedded circles.

\begin{exam} \label{example-almgren-pitts}

    The Riemannian spheres $(\mathbb{S}^2,g_n)$ constructed in \cite{Almgren-Starfish-Mantoulidis-Kuo} define a sequence on which the Almgren-Pitts min-max width is constant. But these Riemannian spheres converge to a complete thrice-punctured sphere, for which we know, by Theorem \ref{theorem-non-compact}, that there are embedded circles of arbitrarily large min-max width. Thus, among these examples we can find embedded circles whose min-max width are much larger than the Almgren-Pitts min-max width of the ambient sphere.

\end{exam}

The next example concerns special singular metrics on the sphere that are obtained as limits of sequences of convex spheres (i.e. spheres with positive curvature). In this setting, we study the limit behavior of the geometric invariants $w_B(\mathbb{S}^2,g_n)$, $W_d(\mathbb{S}^2,g_n)$, as well as the min-max width of special embedded circles in $(\mathbb{S}^2,g_n)$, such as their geodesics of least length and other curves that are close to having the largest possible min-max width. This allows us to prove that there exist convex spheres $(\mathbb{S}^2,g)$ where the following holds: there exists an embedded circle $\sigma$ in $(\mathbb{S}^2,g)$ such that $\mathcal{S}_g(\sigma) > W_d(\mathbb{S}^2,g)$, and every geodesic of least length $\gamma$ of $(\mathbb{S}^2,g)$ satisfies $\mathcal{S}_g(\gamma) < W_d(\mathbb{S}^2,g)$. The first inequality illustrates that for such Riemannian spheres the lower bound for $w_B(\mathbb{S}^2,g)$ in terms of the min-max width of embedded circles given by Theorem \ref{main-theorem} is strictly better than the lower bound in terms of $W_d(\mathbb{S}^2,g)$ established in Proposition \ref{thm-comparison}. The second inequality shows that the geodesics of least length do not detect this gap.

The singular metrics on the sphere that we study are generalizations of the so called Calabi-Croke sphere, which is the metric obtained by the gluing of two copies of an equilateral triangle by its edges. This metric is conjectured to asymptotically attain the supremum of the length of a shortest closed geodesic (henceforth called systole) among all unit area Riemannian $2$-spheres.

\begin{exam} \label{example-supS>Wd}

Let $M$ be the $2$-sphere obtained by the gluing of two copies of a planar triangle $ABC$ by its edges. Let $R$ be the radius of the inscribed circle in $ABC$, and let $h$ be the shortest height of the triangle.

There exists a sequence of Riemannian metrics $(\mathbb{S}^2,g_n)$ with positive curvature that approximate $M$. (Section 4 of \cite{ManSch} has an explicit construction of $g_n$ in the case that $ABC$ is equilateral.  Briefly, these metrics are obtained by replacing small balls centered at the vertices by rotationally symmetric smooth metrics, and by slightly perturbing the result to have positive curvature).

 We first show that $\lim_{n \to \infty}w_B(\mathbb{S}^2,g_n) = 2h$.  

 The positivity of the curvature of $(\mathbb{S}^2,g_n)$ implies that $w_B(\mathbb{S}^2,g_n)$ coincides with the length of the shortest closed geodesic of $(\mathbb{S}^2,g_n)$, which is embedded (\textit{cf}. \cite{Cal-Cao}). This length is the systole of $(\mathbb{S}^2,g_n)$, denoted by $sys(\mathbb{S}^2,g_n)$.

Let $\gamma_n$ denote a systole of $(\mathbb{S}^2,g_n)$. Passing to a subsequence, $\gamma_n$ converges to a closed chain of line segments $\gamma$ in $M$. More precisely, there exist a piecewise smooth parametrization $\gamma:[0,1] \to M$ such that every non-smooth point $\gamma(t_j)$ is a vertex of $ABC$, and $\gamma$ restricted to the intervals $[t_{j-1}, t_j]$ parametrize, with constant speed, a line segment of $M \backslash \{A,B,C\}$. 

Clearly, $\gamma$ is not contained in the interior of a copy of $ABC$. We divide in two cases to estimate the length of $\gamma$, $L(\gamma) = \lim_{n \to \infty} w_B(\mathbb{S}^2,g_n)$. If $\gamma$ intersects a vertex, it must contain at least two segments of $M \backslash \{A,B,C\}$ reaching that vertex, each of length at least $h$, thus $ L(\gamma) \geq 2h$ in this case. In the other case, $\gamma$ avoids all vertices. In this setting, there exists a pair of consecutive line segments of $\gamma$ whose common endpoint belongs to the interior of an edge of $ABC$, and the other two endpoints belong to the other two edges of $ABC$.  To see this, note that a sequence of consecutive line segments of $\gamma$  that avoids a given edge of the triangle behave similarly to a geodesic on a cone escaping from the vertex, and thus will not close.  In fact, one can prove that there exist two disjoint pairs of segments in $\gamma$ with the aforementioned property. Elementary geometric arguments show that the length of a consecutive pair of line segments with the aforementioned property is at least $h$. (It can be even shown that if equality holds, then $ABC$ is necessarily isosceles). Thus, we have $L(\gamma) \ge 2h$ in this case as well.

On the other hand, given $\varepsilon>0$, the sweepout of each copy of the triangle by line segments parallel to the shortest height induces a sweepout for which all curves have length bounded by $2h + \varepsilon$ in $(\mathbb{S}^2,g_n)$, for large $n$. In particular, the closed geodesics $\gamma_n$ that realize the Birkhoff invariant of $(\mathbb{S}^2,g_n)$ have their length bounded from above by $2h+\varepsilon$. The arguments of the previous paragraph and the fact that $\varepsilon>0$ is arbitrary show that $\lim_{n \to \infty}w_B(\mathbb{S}^2,g_n) = 2h$

We now study the geometric invariant $W_d(\mathbb{S}^2, g_n)$ and compute the limit $\lim_{n \to \infty} W_d(\mathbb{S}^2,g_n) = 2R$. The number $W_d(\mathbb{S}^2, g_n)$ is realized as the distance between a critical pair $\{P_n,Q_n\} \subset \mathbb{S}^2$ with respect to the distance function induced by $g_n$. According to \cite{MonRib}, there exists either (a) a pair of minimizing geodesics connecting $P_n$ to $Q_n$ which is simultaneously stationary (which forces their union to form a closed geodesic); or (b) there exists a set of three simultaneously stationary geodesics with endpoints $P_n$ and $Q_n$.  See Theorems 1.7 and 1.8 of \cite{MonRib} for details. (Recall that simultaneously stationary condition on a set of geodesics is equivalent to the condition that it is not possible to decrease the lengths of all of them in first order with respect to the flow of the same vector field).  

Let us argue that $\lim_{n\rightarrow \infty} W_d(\mathbb{S}^2,g_n) \ge 2R$. Passing to a subsequence, suppose that $P_n \to P \in M$, $Q_n \to Q \in M$, and that each geodesic of the set of simultaneously stationary geodesics converges to either (I) a line segment of $M$ connecting $P$ and $Q$, possibly hitting exactly one edge of the triangle (but avoiding all vertices), or (II) a line segment in one of the copies of $ABC$, joining a vertex to another point in that triangle. These restrictions are straightforward consequences of the fact that the geodesics in the sequence minimize length with respect to their endpoints. 

If case (I) holds for one limit of minimizing geodesics, $P$ and $Q$ belong to the interior of different copies of $ABC$, and all other limits behave similarly. Moreover, it is not hard to verify that (a) above is not possible in this case, for large $n$.  Let $L$ be the common length of the three limiting line segments joining $P$ and $Q$. Let $Q'$ denote the point in the copy of the triangle that contains $P$ which is the reflected image of $Q$ with respect to any of the edges of $ABC$. Now $P$ and $Q'$ are in a common face of the triangle, and we view this face as a triangle $ABC$ in the plane (not in $M$), in order to facilitate a geometric argument. Notice that $Q'$ is the center of the planar triangle $P_A P_B P_C$, where $P_A$ is the point in the plane obtained as the reflected image of $P$ with respect to the line $BC$, and similarly for $P_B$ and $P_C$. Moreover, $L$ is radius of the circle through $P_A$, $P_B$, and $P_C$. The circle through the midpoints of $PP_A$, $PP_B$, and $PP_C$ has radius $L/2$ and intersects all edges of $ABC$. Therefore, $L/2 \geq R$, and we conclude that $ \lim_{n \to \infty}W_d(\mathbb{S}^2,g_n) = L \ge 2R$ in this case.

We prove that the inequality just obtained also holds assuming that (II) occurs. Suppose first that $Q$ is a vertex. The point $P$ must necessarily be on the boundary of the triangle. If $P$ is another vertex, then $W_d(\mathbb{S}^2,g_n)$ converges to the length of a side of $ABC$, and the desired estimate holds easily. The only remaining case is the configuration on which $P$ is in the interior of the edge opposite to $Q$, but then $QP$ is orthogonal to that edge. As the height of the triangle is at least $2R$, we conclude that $\lim_{n\rightarrow \infty} W_d(\mathbb{S}^2,g_n) \ge 2R$ if (II) occurs.

The desired upper bound for $\lim_{n \to \infty} W_d(\mathbb{S}^2,g_n)$ can be proved by the construction of explicit sweepouts, as in the proof of Theorem 1.8 in \cite{MonRib} (see Proposition 3.2 of that article). We abuse notation and describe a sweepout by pairs of points of the singular space $M$ because it is clear that it induces the desired sweepouts by pairs of points on $(\mathbb{S}^2,g_n)$, for large $n$. Let $I$ and $I'$ denote the centers of the inscribed circles of each copy of $ABC$. Given $P$ in the copy of the triangle that contains $I$, let $P'$ denote the point that corresponds to $P$ in the copy that contains $I'$. For each $P\in AI$, consider the set $s_P$ defined as the union of the line segments joining $P$ to $AB$ and $AC$, which are orthogonal to these edges. Similarly, use $s_{P'}$ for the reflected union of segments. Notice that $d(P, Q)\leq 2R$, for all $Q \in s_P \cup s_{P'}$. Pair points of $s_P \cup s_{P'}$ to the point $P$. Similarly, consider the orthogonal segments of points in $BI$ (respectively $CI$) orthogonal to the edges $BA$ and $BC$ (respectively $CA$ and $CB$), pairing them and the corresponding reflected segments to the point in $BI$ (respectively $CI$). This pairing defines an admissible sweepout for which all distances are bounded by $2R$. This completes the proof of the upper bound, hence $\lim_{n \to \infty} W_d(\mathbb{S}^2,g_n) = 2R$, as we wanted to show.

    We now specialize the study to a concrete family of examples. Let $M$ and $(\mathbb{S}^2,g_n)$ denote the associated spaces constructed above now using the triangle with vertices $A = (-1,0)$, $B=(1+\delta,0)$, $C = (0,\sqrt{3})$, for some small $\delta>0$. This triangle is a small perturbation of an equilateral triangle, but it is not isosceles. Let $\gamma_n$ denotes a systole of $(\mathbb{S}^2,g_n)$. 
    
    We now show that  $\mathcal{S}_{g_n}(\gamma_n) < W_d(\mathbb{S}^2,g_n)$ for large $n$. Since the triangle $ABC$ is not isosceles, one can show with the previous arguments that $\gamma_n$ must subconverge to the union of the segments representing the shortest height of each copy of the triangle, that is, the curve $\gamma$ in $M$ obtained by the gluing of the segments connecting $(0,\sqrt{3})$ to $(0,0)$ in each face. Consider the sweepout $\{p_t, q_t\}_{t \in [0,1]}$ of $\gamma$ where the points $p_t$ and $q_t$, on different faces of the triangle, are equidistant along $\gamma$ to the point $(0,0)$. Notice that $d_M(p_t,q_t) < 2R$, for all $t \in [0,1]$. The claim follows from the fact that $\lim_{n \to \infty }W_d(\mathbb{S}^2,g_n) = 2R$.

 Finally, we argue that if $n$ is large enough, there exists an embedded circle $\sigma_n$ in $(\mathbb{S}^2,g_n)$ such that $\mathcal{S}_{g_n}(\sigma_n) > W_d(\mathbb{S}^2,g_n)$. To this end, let $\sigma$ be the curve in $M$ constructed as the boundary of the triangle. We claim that there exists an involution $\phi: \sigma \to \sigma$ such that $d_M(p,\phi(p)) > 2R$ for every $p \in \sigma$. This will be enough because of the convergence of the metrics and due to Lemma \ref{lem-nofixedpts-AMS}. The existence of the involution is obtained as in the proof of Proposition \ref{prop-widthestimate-triangle}, because the distance between each vertex of the triangle and the opposite edge is at least $h>2R$ in $M$. 

 To conclude the discussion of this class of examples originating from these triangles, we highlight two inequalities. If $\gamma_n$ denotes a geodesic of least length in $(\mathbb{S}^2,g_n)$, $\sigma_n$ denotes a sequence of embedded circles converging to $\sigma$, and $n$ is large enough, then we have proved that
 \begin{equation*}
 \mathcal{S}_{g_n}(\gamma_n) < W_d(\mathbb{S}^2,g_n) < \mathcal{S}_{g_n}(\sigma_n) \le \frac{1}{2}w_B(\mathbb{S}^2,g_n),
\end{equation*}
and these were the properties we were looking for. 
\end{exam}

The final example presents a class of metrics where equality holds both in Theorem \ref{main-theorem} and Proposition \ref{thm-comparison}.
\begin{exam}
    Let $(\mathbb{S}^2,g)$ be a Riemannian sphere of positive curvature. Suppose there exists a critical pair of points at the level $W_d(\mathbb{S}^2,g)$ for which there exists a simultaneously stationary collection of minimizing geodesics with two elements. (By Theorem 1.8 in \cite{MonRib} there exists a collection with either two or three elements). In this case, the two minimizing geodesics of the collection form together a closed embedded geodesic $\gamma$. The positivity of the curvature allows us to construct a sweepout of $\mathbb{S}^2$ which has $\gamma$ as curve of largest length (see \cite{Cal-Cao}), so that equality holds in Proposition \ref{thm-comparison}. Arguing as in the proof of Theorem \ref{thm-Zollrigiditywd} one shows that, when parametrized proportionally to arc-length, $\gamma: \mathbb{R}/\mathbb{Z} \to \mathbb{S}^2$ satisfies $d_g(\gamma(s),\gamma(s+1/2)) = \frac{1}{2}w_B(\mathbb{S}^2,g)$, for all $s \in \mathbb{R}$, where $L_g(\gamma) = w_B(\mathbb{S}^2,g)$. Thus $S_g(\gamma) = L_g(\gamma)/2 = W_d(\mathbb{S}^2,g)$ by Theorem D in \cite{AmbMonSan}. In conclusion, $S_g(\gamma) = \frac{1}{2}w_B(\mathbb{S}^2,g) = W_d(\mathbb{S}^2,g)$.

\end{exam}

\bibliographystyle{plain}
\bibliography{refs}

@book {AmbMon,
    AUTHOR = {Ambrozio, Lucas and Montezuma, Rafael},
     TITLE = {M\'etodos min-max em geometria---uma introdu\c c\~ao},
    SERIES = {35$\sp {\rm o}$ Col\'oquio Brasileiro de Matem\'atica},
    VOLUME = {1},
 PUBLISHER = {Instituto Nacional de Matem\'atica Pura e Aplicada (IMPA), Rio
              de Janeiro},
      YEAR = {2025},
     PAGES = {xiv+185},
      ISBN = {978-85-244-0590-7},
   MRCLASS = {58E10 (49Q05 49Q20 53C22 58E05)},
  MRNUMBER = {4993149},
}

@inproceedings {Guth,
    AUTHOR = {Guth, Larry},
     TITLE = {Metaphors in systolic geometry},
 BOOKTITLE = {Proceedings of the {I}nternational {C}ongress of
              {M}athematicians. {V}olume {II}},
     PAGES = {745--768},
 PUBLISHER = {Hindustan Book Agency, New Delhi},
      YEAR = {2010},
      ISBN = {978-81-85931-08-3; 978-981-4324-32-8; 981-4324-32-9},
   MRCLASS = {53C23},
}

@article {AmbMonSan,
    AUTHOR = {Ambrozio, Lucas and Montezuma, Rafael and Santos, Roney},
     TITLE = {The width of embedded circles},
   JOURNAL = {J. Reine Angew. Math.},
  FJOURNAL = {Journal f\"ur die Reine und Angewandte Mathematik. [Crelle's
              Journal]},
    VOLUME = {824},
      YEAR = {2025},
     PAGES = {289--332},
      ISSN = {0075-4102,1435-5345},
   MRCLASS = {58E10 (49Q20 53A04 55M30)},
  MRNUMBER = {4926948},
MRREVIEWER = {Rossella\ Bartolo},
       DOI = {10.1515/crelle-2025-0026},
       URL = {https://doi.org/10.1515/crelle-2025-0026},
}

@article {BalSab,
    AUTHOR = {Balacheff, Florent and Sabourau, Stéphane},
     TITLE = {Diastolic and isoperimetric inequalities on surfaces},
   JOURNAL = {Annales Scientifiques de l'École Normale Supérieure},
  FJOURNAL = {},
    VOLUME = {43},
      YEAR = {2010},
     PAGES = {579--605},
      note = {}
}

@article {Almgren-Starfish-Mantoulidis-Kuo,
    AUTHOR = {Mantoulidis, Christos and Marx-Kuo, Jared},
     TITLE = {Almgren's Three-Legged Starfish},
   JOURNAL = {},
  FJOURNAL = {},
    VOLUME = {},
      YEAR = {},
     PAGES = {},
      note = {arXiv:2504.20895}
}

@book {Bal,
    AUTHOR = {Balitskiy, Alexey},
     TITLE = {Bounds on {U}rysohn {W}idth},
      NOTE = {Thesis (Ph.D.)--Massachusetts Institute of Technology},
 PUBLISHER = {ProQuest LLC, Ann Arbor, MI},
      YEAR = {2021},
     PAGES = {(no paging)},
   MRCLASS = {99-05},
  MRNUMBER = {4464413},
       URL =
              {http://gateway.proquest.com/openurl?url_ver=Z39.88-2004&rft_val_fmt=info:ofi/fmt:kev:mtx:dissertation&res_dat=xri:pqm&rft_dat=xri:pqdiss:29291639},
}

@article {BalCroKat,
    AUTHOR = {Balacheff, Florent and Croke, Christopher and Katz, Mikhail
              G.},
     TITLE = {A {Z}oll counterexample to a geodesic length conjecture},
   JOURNAL = {Geom. Funct. Anal.},
  FJOURNAL = {Geometric and Functional Analysis},
    VOLUME = {19},
      YEAR = {2009},
    NUMBER = {1},
     PAGES = {1--10},
      ISSN = {1016-443X,1420-8970},
   MRCLASS = {53C23 (53C22)},
  MRNUMBER = {2507217},
MRREVIEWER = {Luofei\ Liu},
       DOI = {10.1007/s00039-009-0708-9},
       URL = {https://doi.org/10.1007/s00039-009-0708-9},
}

@article {Ban-chords,
    AUTHOR = {Bangert, Victor},
     TITLE = {Manifolds with Geodesic Chords of Constant Length},
   JOURNAL = {Math. Ann.},
  FJOURNAL = {},
    VOLUME = {265},
    NUMBER = {},
      YEAR = {1983},
     PAGES = {273--281},
      note = {}
}

@book {Bes,
    AUTHOR = {Besse, Arthur L.},
     TITLE = {Manifolds all of whose geodesics are closed},
    SERIES = {Ergebnisse der Mathematik und ihrer Grenzgebiete [Results in
              Mathematics and Related Areas]},
    VOLUME = {93},
      NOTE = {With appendices by D. B. A. Epstein, J.-P. Bourguignon, L.
              B\'erard-Bergery, M. Berger and J. L. Kazdan},
 PUBLISHER = {Springer-Verlag, Berlin-New York},
      YEAR = {1978},
     PAGES = {ix+262},
      ISBN = {3-540-08158-5},
   MRCLASS = {53C20 (53C22 58G99)},
  MRNUMBER = {496885},
MRREVIEWER = {R.\ L.\ Bishop},
}

@article {Bir,
    AUTHOR = {Birkhoff, George D.},
     TITLE = {Dynamical systems with two degrees of freedom},
   JOURNAL = {Trans. Amer. Math. Soc.},
  FJOURNAL = {Transactions of the American Mathematical Society},
    VOLUME = {18},
      YEAR = {1917},
    NUMBER = {2},
     PAGES = {199--300},
      ISSN = {0002-9947,1088-6850},
   MRCLASS = {70H03 (34C25 37C27)},
  MRNUMBER = {1501070},
       DOI = {10.2307/1988861},
       URL = {https://doi.org/10.2307/1988861},
}

@article {Cal-Cao,
    AUTHOR = {Eugenio Calabi and Jian Guo Cao},
     TITLE = {Simple closed geodesics on convex surfaces},
   JOURNAL = {J. Differential Geom.},
  FJOURNAL = {},
    VOLUME = {36(3)},
      YEAR = {1992},
     PAGES = {517-549},
      note = {DOI: 10.4310/jdg/1214453180}
}

@article {ChaLio,
    AUTHOR = {Chambers, Gregory R. and Liokumovich, Yevgeny},
     TITLE = {Optimal sweepouts of a {R}iemannian 2-sphere},
   JOURNAL = {J. Eur. Math. Soc. (JEMS)},
  FJOURNAL = {Journal of the European Mathematical Society (JEMS)},
    VOLUME = {21},
      YEAR = {2019},
    NUMBER = {5},
     PAGES = {1361--1377},
      ISSN = {1435-9855,1435-9863},
   MRCLASS = {53C23 (53C22)},
  MRNUMBER = {3941494},
MRREVIEWER = {Luis\ Guijarro},
       DOI = {10.4171/JEMS/863},
       URL = {https://doi.org/10.4171/JEMS/863},
}

@article {Che,
    AUTHOR = {Cheeger, Jeff},
     TITLE = {A Lower Bound for the Smallest Eigenvalue of the Laplacian},
   JOURNAL = {Problems in Analysis: A Symposium in Honor of Salomon Bochner. Princeton Univ. Press},
  FJOURNAL = {},
    VOLUME = {},
      YEAR = {1970},
    NUMBER = {},
     PAGES = {195--199},
      ISSN = {},
   MRCLASS = {},
  MRNUMBER = {},
MRREVIEWER = {},
       URL = {},
}

@article {Cro,
    AUTHOR = {Croke, Christopher B.},
     TITLE = {Area and the length of the shortest closed geodesic},
   JOURNAL = {J. Differential Geom.},
  FJOURNAL = {Journal of Differential Geometry},
    VOLUME = {27},
      YEAR = {1988},
    NUMBER = {1},
     PAGES = {1--21},
      ISSN = {0022-040X,1945-743X},
   MRCLASS = {53C22 (58E10)},
  MRNUMBER = {918453},
MRREVIEWER = {Gudlaugur\ Thorbergsson},
       URL = {http://projecteuclid.org/euclid.jdg/1214441646},
}

@article {Gra,
    AUTHOR = {Grayson, Matthew A.},
     TITLE = {Shortening embedded curves},
   JOURNAL = {Ann. of Math. (2)},
  FJOURNAL = {Annals of Mathematics. Second Series},
    VOLUME = {129},
      YEAR = {1989},
    NUMBER = {1},
     PAGES = {71--111},
      ISSN = {0003-486X,1939-8980},
   MRCLASS = {53C22 (58E10)},
  MRNUMBER = {979601},
MRREVIEWER = {Gudlaugur\ Thorbergsson},
       DOI = {10.2307/1971486},
       URL = {https://doi.org/10.2307/1971486},
}

@article {Gre,
    AUTHOR = {Green, L. W.},
     TITLE = {Auf {W}iedersehensfl\"achen},
   JOURNAL = {Ann. of Math. (2)},
  FJOURNAL = {Annals of Mathematics. Second Series},
    VOLUME = {78},
      YEAR = {1963},
     PAGES = {289--299},
      ISSN = {0003-486X},
   MRCLASS = {53.72},
  MRNUMBER = {155271},
MRREVIEWER = {P.\ Hartman},
       DOI = {10.2307/1970344},
       URL = {https://doi.org/10.2307/1970344},
}

@article {GroShi,
    AUTHOR = {Grove, Karsten and Shiohama, Katsuhiro},
     TITLE = {A generalized sphere theorem},
   JOURNAL = {Ann. of Math. (2)},
  FJOURNAL = {Annals of Mathematics. Second Series},
    VOLUME = {106},
      YEAR = {1977},
    NUMBER = {2},
     PAGES = {201--211},
      ISSN = {0003-486X},
   MRCLASS = {53C20},
  MRNUMBER = {500705},
MRREVIEWER = {R.\ L.\ Bishop},
       DOI = {10.2307/1971164},
       URL = {https://doi.org/10.2307/1971164},
}

@article{Gui,
 author = {Guillemin, Victor},
 title = {The {Radon} transform on {Zoll} surfaces},
 fjournal = {Advances in Mathematics},
 journal = {Adv. Math.},
 issn = {0001-8708},
 volume = {22},
 pages = {85--119},
 year = {1976},
 language = {English},
 doi = {10.1016/0001-8708(76)90139-0},
 zbMATH = {3549806},
 Zbl = {0353.53027}
}

@article {ManSch,
    AUTHOR = {Mantoulidis, Christos and Schoen, Richard},
     TITLE = {On the {B}artnik mass of apparent horizons},
   JOURNAL = {Classical Quantum Gravity},
  FJOURNAL = {Classical and Quantum Gravity},
    VOLUME = {32},
      YEAR = {2015},
    NUMBER = {20},
     PAGES = {205002, 16},
      ISSN = {0264-9381,1361-6382},
   MRCLASS = {83C57},
  MRNUMBER = {3406373},
MRREVIEWER = {Theophanes\ Grammenos},
       DOI = {10.1088/0264-9381/32/20/205002},
       URL = {https://doi.org/10.1088/0264-9381/32/20/205002},
}

@book {MarMonOli,
    AUTHOR = {Martini, Horst and Montejano, Luis and Oliveros, D\'eborah},
     TITLE = {Bodies of constant width},
      NOTE = {An introduction to convex geometry with applications},
 PUBLISHER = {Birkh\"auser/Springer, Cham},
      YEAR = {2019},
     PAGES = {xi+486},
      ISBN = {978-3-030-03866-3; 978-3-030-03868-7},
   MRCLASS = {52Axx (33C55 52-02 52C17 53C65)},
  MRNUMBER = {3930585},
MRREVIEWER = {Alina\ Stancu},
       DOI = {10.1007/978-3-030-03868-7},
       URL = {https://doi.org/10.1007/978-3-030-03868-7},
}

@article {MonRib,
    AUTHOR = {Montezuma, Rafael and Ribeiro, Idalina},
     TITLE = {The min-max width of spheres associated to the distance function},
   JOURNAL = {},
  FJOURNAL = {},
    VOLUME = {},
      YEAR = {},
     PAGES = {},
      note = {arXiv:2504.20895}
}

@article {Pitts,
    AUTHOR = {Pitts, Jon},
     TITLE = {Regularity and singularity of one dimensional stationary integral varifolds on
manifolds arising from variational methods in the large.},
   JOURNAL = {Symposia Mathematica},
  FJOURNAL = {},
    VOLUME = {Vol. XIV},
      YEAR = {1973},
     PAGES = {465–472},
      note = {arXiv:2504.20895}
}

@article{Rot,
 author = {Rotman, R.},
 title = {The length of a shortest closed geodesic and the area of a {{\( 2\)}}-dimensional sphere},
 fjournal = {Proceedings of the American Mathematical Society},
 journal = {Proc. Am. Math. Soc.},
 issn = {0002-9939},
 volume = {134},
 number = {10},
 pages = {3041--3047},
 year = {2006},
 language = {English},
 doi = {10.1090/S0002-9939-06-08297-9},
}

@article {Sab,
    AUTHOR = {Sabourau, St\'ephane},
     TITLE = {Strong deformation retraction of the space of {Z}oll {F}insler
              projective planes},
   JOURNAL = {J. Symplectic Geom.},
  FJOURNAL = {The Journal of Symplectic Geometry},
    VOLUME = {17},
      YEAR = {2019},
    NUMBER = {2},
     PAGES = {443--476},
      ISSN = {1527-5256,1540-2347},
   MRCLASS = {53C60 (53D25)},
  MRNUMBER = {3992722},
MRREVIEWER = {Vasile\ Sorin\ Sab\u au},
       DOI = {10.4310/JSG.2019.v17.n2.a5},
       URL = {https://doi.org/10.4310/JSG.2019.v17.n2.a5},
}

@article {Tre,
    AUTHOR = {Treibergs, Andrejs},
     TITLE = {Estimates of volume by the length of shortest closed geodesics on a
convex hypersurface},
   JOURNAL = {Invent. Math.},
  FJOURNAL = {},
    VOLUME = {80},
      YEAR = {1985},
     PAGES = {481--488},
      ISSN = {},
   MRCLASS = {},
  MRNUMBER = {},
MRREVIEWER = {},
       URL = {},
}

@article {Wei,
    AUTHOR = {Weinstein, Alan},
     TITLE = {On the volume of manifolds all of whose geodesics are closed},
   JOURNAL = {J. Differential Geometry},
  FJOURNAL = {Journal of Differential Geometry},
    VOLUME = {9},
      YEAR = {1974},
     PAGES = {513--517},
      ISSN = {0022-040X,1945-743X},
   MRCLASS = {53C20},
  MRNUMBER = {390968},
MRREVIEWER = {H.\ Osborn},
       URL = {http://projecteuclid.org/euclid.jdg/1214432547},
}

@article {Zol,
    AUTHOR = {Zoll, Otto},
     TITLE = {Ueber {F}l\"achen mit {S}charen geschlossener geod\"atischer
              {L}inien},
   JOURNAL = {Math. Ann.},
  FJOURNAL = {Mathematische Annalen},
    VOLUME = {57},
      YEAR = {1903},
    NUMBER = {1},
     PAGES = {108--133},
      ISSN = {0025-5831,1432-1807},
   MRCLASS = {99-04},
  MRNUMBER = {1511201},
       DOI = {10.1007/BF01449019},
       URL = {https://doi.org/10.1007/BF01449019},
}

\end{document}